\documentclass[10pt]{article}
\usepackage{amsmath, amssymb, amsthm, amsfonts, cases}
\usepackage{mathrsfs}
\usepackage{url}
\usepackage{authblk}
\usepackage[usenames]{color}
\usepackage{geometry}
\usepackage{indentfirst}
\allowdisplaybreaks[4]
\DeclareMathOperator*{\essinf}{ess\,inf}

\theoremstyle{plain}
\newtheorem{thm}{Theorem}[section]

\newtheorem{lem}[thm]{Lemma}
\newtheorem{prop}[thm]{Proposition}

\theoremstyle{definition}
\newtheorem{defn}{Definition}[section]
\newtheorem{ass}{Assumption}[section]
\newtheorem{rmk}{Remark}[section]

\makeatletter\@addtoreset{equation}{section} \makeatother

\begin{document}		
	\title{ Viscosity Solutions of Hamilton--Jacobi--Bellman Equations for Control Systems Driven by Teugels Martingales
		\thanks{Q. Meng was supported  the National Natural Science Foundation of China ( No.12271158)}}
	
	\date{}
	
	\author{Yongpeng Lin}
	\author{Qingxin Meng\footnote{Corresponding author.
			\authorcr
			\indent E-mail address: YongpengLin1725@163.com (Y. Lin), tmorning@zjhu.edu.cn(M. Tang), mqx@zjhu.edu.cn (Q.Meng)}}
	\author{Maoning Tang}
	
	\affil{\small{Department of Mathematical Sciences, Huzhou University, Zhejiang 313000,PR  China}}
	
	\maketitle

	\begin{abstract}
		In this paper, we study a class of stochastic optimal control problems driven by L\'{e}vy processes. The controlled state process is characterized by a forward--backward stochastic differential equation (FBSDE), in which the jump perturbations are introduced through a system of Teugels martingales to describe the random features of general L\'{e}vy processes. Based on the comparison theorem and stability theory for backward stochastic differential equations (BSDEs) driven by L\'{e}vy processes, the corresponding dynamic programming principle is established. Consequently, the Hamilton--Jacobi--Bellman (HJB) equation associated with the value function is derived, and it is shown within the viscosity solution framework that the value function is a viscosity solution of this equation. Since existing methods in the literature are difficult to extend directly to general L\'{e}vy-driven control problems, we prove the uniqueness of the viscosity solution to the HJB equation by means of the comparison principle, which in turn characterizes the uniqueness of the value function.	
	\end{abstract}
	
	\textbf{Keywords}: Dynamic Programming Principle; L\'{e}vy processes; FBSDEs; HJB equations; Teugels martingales; viscosity solutions

	\section{Introduction}
	
	Stochastic optimal control problems stand as a cornerstone of modern control theory and applied mathematics, with profound implications for engineering, finance, insurance, and other fields featuring uncertain dynamics. Since Bellman proposed the Dynamic Programming Principle (DPP) in the 1950s, this principle has revolutionized the analysis of global optimal control by leveraging the time consistency of the value function to reduce complex control problems to the solvability of associated Hamilton--Jacobi--Bellman (HJB) equations. In stochastic settings, the value function typically lacks classical smoothness, rendering the HJB equation intractable under traditional solution frameworks---thus elevating viscosity solution theory to a unified and indispensable analytical tool for characterizing value functions in stochastic optimal control.

The nonlinear theory of backward stochastic differential equations (BSDEs), established by Pardoux and Peng \cite{pardoux1990adapted}, has significantly advanced stochastic optimal control by providing a natural probabilistic representation of recursive cost functionals in controlled stochastic systems. Building on this theory, Peng \cite{peng1997backward} introduced the backward semigroup method, which reveals a fundamental connection between BSDEs, the dynamic programming principle, and viscosity solutions of HJB equations. We also refer to the monograph of Yong and Zhou \cite{yong1999stochastic} for a systematic treatment of stochastic control theory in the continuous diffusion setting, including the maximum principle and the theory of Hamilton--Jacobi--Bellman equations.

However, the above theoretical framework is primarily developed in the continuous diffusion setting and does not directly extend to systems with discontinuous dynamics. From the perspective of stochastic analysis, L\'{e}vy process-driven jump perturbations introduce structural complexities that are fundamentally different from those in Brownian-motion-based systems. A general L\'{e}vy process contains infinitely many random jumps, and its discontinuous sample paths prevent the direct application of the classical martingale representation theory associated with finite-dimensional Brownian motion.

A critical theoretical challenge therefore arises: within the BSDE framework, how can one construct martingale representations adapted to the jump structure of L\'{e}vy processes so as to ensure well-posedness, stability estimates, and rigorous stochastic control analysis? Teugels addressed this issue by introducing an orthogonalization approach based on the power-jump processes of L\'{e}vy processes. This idea was further developed by Nualart and Schoutens \cite{nualart2000chaotic} into a systematic theory of Teugels martingales for square-integrable L\'{e}vy processes, leading to a fundamental martingale representation theorem.

This framework has enabled substantial progress in the study of L\'{e}vy-driven BSDEs and their connections with integro-partial differential equations \cite{nualart2001backward, elotmani2008backward}, and provides a natural foundation for the analysis of stochastic control problems with general jump dynamics.

While Barles and Imbert \cite{barles2008second} systematically developed viscosity solution theory for second-order integro-differential equations with L\'{e}vy-type nonlocal operators, most of the existing literature on stochastic optimal control for L\'{e}vy-driven systems is formulated within the framework of the Maximum Principle (MP), focusing primarily on necessary or sufficient optimality conditions rather than on the dynamic programming principle and the viscosity characterization of the value function \cite{meng2009necessary, hafayed2015mean, meherrem2019maximum, bougherara2018maximum, hafayed2016partial, tang2012optimal}.

As a consequence, although significant progress has been made in the MP-based analysis of stochastic control problems with jumps, a systematic treatment of such problems within the dynamic programming framework—particularly one that rigorously connects the DPP with viscosity solutions of the associated HJB equations in the presence of general L\'{e}vy jump structures remains largely incomplete.

Related studies further highlight the limitations of current methodologies. Li and Wei \cite{LiWei2014} and Hu, Ji, and Xue \cite{hu2019existence} investigated FBSDE-based stochastic optimal control in continuous diffusion settings, establishing DPP and proving viscosity solution uniqueness for associated HJB equations. However, their frameworks are confined to Brownian motion-driven systems and exclude jump-type perturbations. Li and Peng \cite{lipeng2009stochastic} extended BSDE methods to jump-disturbed control problems, deriving DPP and verifying viscosity solution properties of HJB equations, but their results are restricted to jump structures jointly driven by Brownian motion and Poisson random measures, excluding the general L\'{e}vy process case with infinite jumps. Although recent works have explored L\'{e}vy-driven FBSDEs and Teugels martingales \cite{nualart2001backward, elotmani2008backward, bahlali2003bsde}, they lack a systematic integration of DPP, viscosity solution theory, and general L\'{e}vy jump dynamics.

This paper makes three main contributions to stochastic optimal control theory.

First, we establish a unified analytical framework for FBSDE-based stochastic optimal control problems driven by general L\'{e}vy processes, using Teugels martingales to handle the infinite jump structure. Unlike existing works limited to Poisson jumps or continuous diffusion, our framework accommodates general L\'{e}vy processes and infinite-dimensional Teugels martingales simultaneously.

Second, we rigorously establish the dynamic programming principle for L\'{e}vy-driven FBSDE control systems. By combining stability estimates of L\'{e}vy-type BSDEs and the orthogonality of Teugels martingales, we build a precise connection between DPP and the viscosity solution theory of the corresponding HJB equations, filling the gap in the existing literature for general L\'{e}vy processes.

Third, we prove that the value function of the control problem is the unique viscosity solution of the associated HJB equation under the Teugels martingale framework.

Existing methods encounter great difficulty in proving uniqueness for general L\'{e}vy-driven systems due to nonlocal jump operators. To overcome this difficulty, we establish a new comparison principle for HJB equations involving infinite-dimensional Teugels martingales, which ensures the uniqueness of the viscosity solution and thus the uniqueness of the value function.

The remainder of this paper is organized as follows. In Section~2, we introduce preliminary notations, the probabilistic setting, and the construction of Teugels martingales. Section~3 presents the stochastic optimal control problem, the associated forward-backward stochastic differential equations, and their well-posedness. In Section~4, we introduce the definition of viscosity solutions to the HJB equation and prove existence via the BSDE approach. Finally,
Section~5 is devoted to the uniqueness of viscosity solutions by applying the comparison principle under the integral formulation of the L\'{e}vy operator on the whole space.

	\section{Notations and Preliminaries}\label{sec:2}
	This section specifies the notation, function spaces, and probabilistic framework used throughout the paper.
	
	Let $\mathbb{R}^n$ denote the $n$-dimensional Euclidean space, $\mathbb{R}^{k\times n}$ the set of all real $k \times n$ matrices, and $\mathbb{S}^n$ the set of all $n \times n$ symmetric matrices. The control set $U \subset \mathbb{R}^k$ is assumed to be a nonempty compact set. For vectors $x,y \in \mathbb{R}^n$ and matrices $M,N \in \mathbb{R}^{k\times n}$, the inner products and norms are defined by
	\[
	\langle x,y \rangle = x^\top y, \quad \|x\| = \sqrt{\langle x,x \rangle},
	\quad \langle M,N \rangle = \mathrm{tr}(MN^\top), \quad \|M\| = \sqrt{\langle M,M \rangle},
	\]
	where $\mathrm{tr}(\cdot)$ denotes the trace of a matrix.
	
	\subsection{Construction of Teugels Martingales and the Martingale Representation Theorem}
	
	Let $(\Omega, \mathcal{F}, \mathbb{P})$ be a complete probability space on which a real-valued L\'{e}vy process $\{L_t\}_{t\in[0,T]}$ is defined, whose sample paths are right-continuous with left limits (c\`{a}dl\`{a}g). The natural filtration generated by this process is defined by
	\[
	\mathcal{F}_t = \sigma\{ L_s: s \le t \} \vee \mathcal{N}, \quad t \in [0,T],
	\]
	where $\mathcal{N}$ denotes the collection of all $\mathbb{P}$-null sets. We denote this filtration by $\mathcal{F} = \{\mathcal{F}_t\}_{t\in[0,T]}$.
	
	According to the L\'{e}vy--Khintchine formula, the characteristic function of the L\'{e}vy process is given by
	\[
	\mathbb{E}[e^{i u L_t}] = e^{t \psi(u)}, \quad u \in \mathbb{R}, \ t \in [0,T],
	\]
	where the characteristic exponent is
	\[
	\psi(u) = i b u - \frac{1}{2} \sigma^2 u^2 + \int_{\mathbb{R}} \left( e^{i u x} - 1 - i u x \mathbf 1_{\{|x|\le 1\}} \right) \nu(dx),
	\]
	Thus $L$ is characterized by its L\'{e}vy triplet $(b,\sigma,\nu)$
	where $b\in\mathbb R$, $\sigma^2\ge 0$ and $\nu$ is a measure defined on $\mathbb R\setminus\{0\}$
	and satisfies
	\begin{itemize}
		\item[(i)]
		\[
		\int_{\mathbb{R}} (1 \wedge |x|^2)\, \nu(dx) < +\infty,
		\]
		\item[(ii)]
		\[
		\exists\, \epsilon > 0 \text{ and } \lambda > 0 \text{ such that }
		\int_{(-\epsilon,\epsilon)^c} e^{\lambda |x|}\, \nu(dx) < +\infty.
		\]
	\end{itemize}
	This implies that the L\'{e}vy measure admits moments of all orders, i.e.
	\[
	\int_{\mathbb{R}} |x|^i \, \nu(dx) < \infty, \quad \forall i \ge 2.
	\]

	Denote by $\Delta L_t = L_t - L_{t-}$ the jump of the L\'{e}vy process at time $t$, $t \in [0,T]$. Define the corresponding power-jump processes by
	\[
	L_t^{(1)} := L_t,\qquad
	L_t^{(i)} := \sum_{0<s\le t}(\Delta L_s)^i,\quad i\ge2,
	\]
	which characterize the higher-order cumulative features of the jumps of the L\'{e}vy process over a given time interval.
	
	According to the L\'{e}vy--It\^{o} decomposition, a L\'{e}vy process can be decomposed into a drift part, a Brownian continuous perturbation part, and a jump part:
	\[
	L_t = b t + \sigma W_t + \int_0^t \int_{|x|\ge 1} x N(ds,dx) + \int_0^t \int_{|x|<1} x \tilde{N}(ds,dx),
	\]
	where $W_t$ is a standard Brownian motion, $N(ds,dx)$ is a Poisson random measure, and $\tilde{N}(ds,dx) = N(ds,dx) - \nu(dx)\, ds$ is the compensated Poisson random measure.
	
	Taking expectations on both sides of the decomposition, we observe that the Brownian motion term has zero mean, and the compensated Poisson integral $\int_0^t \int_{|x|<1} x \tilde{N}(ds,dx)$ is a martingale with zero mean. For the large-jump term, using the fact that $N(ds,dx)$ has intensity measure $\nu(dx)ds$, we obtain
	\[
	\mathbb{E}\left[\int_0^t \int_{|x|\ge 1} x N(ds,dx)\right] = \int_0^t \int_{|x|\ge 1} x \nu(dx) ds = t \int_{|x|\ge 1} x \nu(dx).
	\]
	Therefore,
	\[
	\mathbb{E}[L_t] = b t + 0 + t \int_{|x|\ge 1} x \nu(dx) + 0 = t\left(b + \int_{|x|\ge 1} x \nu(dx)\right).
	\]
	We define the first-order mean parameter $m_1$ as
	\[
	m_1 = \mathbb{E}[L_1]= b + \int_{|x|\ge 1} x \nu(dx),
	\]
	so that $\mathbb{E}[L_t] = \mathbb{E}[L_t^{(1)}] = m_1 t < \infty$, where the finiteness of $m_1$ is guaranteed by the exponential-moment assumption.
	
	For the higher-order power-jump processes $L_t^{(i)} = \sum_{0 < s \le t} (\Delta L_s)^i$ with $i \ge 2$, they are purely discontinuous and their first moments are determined solely by the L\'{e}vy measure. As established in the classical result (see, e.g.,\cite{protter2005stochastic}), the expectation of $L_t^{(i)}$ is given by the integral of $x^i$ with respect to the L\'{e}vy measure over time:
	\[
	\mathbb{E}[L_t^{(i)}] = \mathbb{E}\left[ \sum_{0 < s \le t} (\Delta L_s)^i \right] = t \int_{\mathbb{R}} x^i \, \nu(dx), \quad i \ge 2.
	\]
	This motivates the natural definition of the $i$-th order mean parameter $m_i$ for $i \ge 2$ as:
	\[
	m_i := \int_{\mathbb{R}} x^i \, \nu(dx),
	\]
	so that $\mathbb{E}[L_t^{(i)}] = m_i t$.
	
	Based on the power-jump processes $\{L_t^{(i)}\}$, we define the zero-mean martingales by
	\[
	Y_t^{(i)} = L_t^{(i)} - m_i t, \quad i \ge 1.
	\]
	
	To achieve the orthogonality of $\{Y_t^{(i)}\}$, we introduce the measure
	\[
	\mu(dx) = x^2 \nu(dx) + \sigma^2 \delta_0(dx),
	\]
	and consider the space $S_1$ consisting of all real-valued polynomials defined on the real axis. We equip this space with the following inner product: for any polynomials $f(x)$ and $g(x)$, define
	\[
	\langle f, g \rangle_1 = \int_{\mathbb{R}} f(x) g(x) \mu(dx)
	= \int_{\mathbb{R}} f(x) g(x) x^2 \nu(dx) + \sigma^2 f(0) g(0).
	\]
	Within the framework of the above inner product $\langle\cdot,\cdot\rangle_1$, we apply the Gram--Schmidt orthogonalization procedure to the polynomial sequence $\{1, x, x^2, \ldots\}$ and construct a family of orthogonal polynomials $\{q_n(x)\}_{n\ge 0}$ such that, for $n \ge 1$, the polynomial $q_{n-1}$ admits the representation
	\[
	q_{n-1}(x)=\sum_{j=1}^{n} a_{n j} x^{j-1},
	\]
	where
	\[
	q_0(x) \equiv a_{1,1}
	= \frac{1}{\sqrt{\int_{\mathbb{R}} x^2 \nu(dx) + \sigma^2}},
	\qquad
	\langle q_n, q_m \rangle_1 = \delta_{nm}.
	\]

	Define
	\[
	p_n(x) = x q_{n-1}(x)
	= a_{n,n} x^n + a_{n,n-1} x^{n-1} + \cdots + a_{n,1} x,
	\quad
	H_t^{(i)} = \sum_{j=1}^i a_{ij} Y_t^{(j)},
	\]
	which yields a family of Teugels martingales satisfying the strong orthogonality property:
	\[
	\langle H^{(i)}, H^{(j)} \rangle_t = \delta_{ij} t,
	\]
	where $\langle \cdot,\cdot\rangle$ denotes the predictable quadratic covariation.

	In particular, when $i=1$, we have
	\[
	H_t^{(1)} = a_{11} (L_t - m_1 t), \quad dL_t = \frac{1}{a_{11}} dH_t^{(1)} + m_1 dt.
	\]
	
	Let us introduce the following spaces:
	
	\begin{itemize}
		\item $\ell^2 = \{ x = (x_n)_{n \ge 1} \,;\, \|x\|_{\ell^2} = \left(\sum_{n=1}^{\infty} |x_n|^2\right)^{\frac12} < \infty \}.$
		
		\item $L^2(\Omega,\mathcal{F}_T,\mathbb{P};\mathbb{R}) = \{ \xi : \Omega \to \mathbb{R} \mid \xi \text{ is } \mathcal{F}_T\text{-measurable and } \mathbb{E}[|\xi|^2] < \infty \}.$
		
		\item For any $0 \le t \le T$, we define the space of square-integrable processes on $[t,T]$ by
		\[
		\mathcal{H}^2(t,T;\mathbb{R}) = \Big\{ \varphi : [t,T] \times \Omega \to \mathbb{R} \mid \varphi \text{ is } \mathcal{F}\text{-progressively measurable and } \mathbb{E} \int_{t}^{T} |\varphi_s|^{2} \, ds < \infty \Big\}.
		\]
		
		\item $\mathcal{P}^2(t,T;\mathbb{R})$ denotes the subspace of $\mathcal{H}^2(t,T;\mathbb{R})$ consisting of predictable processes.
		
		\item $\mathcal{S}^2(t,T;\mathbb{R}) = \Big\{ \varphi : [t,T] \times \Omega \to \mathbb{R} \mid \varphi \text{ is } \mathcal{F}\text{-adapted, c\`{a}dl\`{a}g and } \mathbb{E} \big[ \sup_{s \in [t,T]} |\varphi_s|^{2} \big] < \infty \Big\}.$
		
		\item For $\ell^2$-valued processes, we define
		\[
		\mathcal{H}^2(t,T;\ell^{2}) = \Big\{ \phi = (\phi^{(k)})_{k \ge 1} : [t,T] \times \Omega \to \ell^2 \mid \phi \text{ is } \mathcal{F}\text{-progressively measurable and } \mathbb{E} \int_{t}^{T} \|\phi_s\|_{\ell^{2}}^2 \, ds < \infty \Big\},
		\]
		and $\mathcal{P}^2(t,T;\ell^{2})$ denotes its predictable subspace. The norm on $\mathcal{H}^2(t,T;\ell^{2})$ is given by
		\[
		\|\phi\|_{\mathcal{H}^2(t,T;\ell^{2})}^{2} = \mathbb{E} \int_{t}^{T} \|\phi_s\|_{\ell^{2}}^2 \, ds.
		\]
	\end{itemize}
	
	Under the above function spaces and probabilistic framework, by Theorem~2 in Nualart and Schoutens \cite{nualart2000chaotic}, we obtain the following Teugels martingale representation theorem.
	
	\begin{lem}
		Under the above framework, any square-integrable $\mathcal{F}_T$-measurable random variable $F$ admits the unique representation
		\[
		F = \mathbb{E}[F] + \sum_{k=1}^{\infty} \int_0^T Z_s^{(k)} \, dH_s^{(k)},
		\quad
		Z = (Z_s^{(k)})_{k \ge 1} \in \mathcal{P}^2(0,T;\ell^2).
		\]
	\end{lem}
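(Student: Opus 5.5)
The plan follows the structure of Nualart and Schoutens \cite{nualart2000chaotic}. We first establish a chaos-type density result and then pass to the limit using the isometry furnished by the strong orthogonality $\langle H^{(i)},H^{(j)}\rangle_t=\delta_{ij}t$.

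\emph{Isometry.} For $Z=(Z^{(k)})_{k\ge1}\in\mathcal{P}^2(0,T;\ell^2)$ we set $I(Z)=\sum_{k}\int_0^T Z^{(k)}_s\,dH^{(k)}_s$. Strong orthogonality gives
\[
\mathbb{E}\Big[\int_0^T Z^{(k)}_s dH^{(k)}_s\int_0^T Z^{(j)}_s dH^{(j)}_s\Big]=\delta_{kj}\,\mathbb{E}\int_0^T Z^{(k)}_sZ^{(j)}_s\,ds .
\]
Hence the partial sums are Cauchy in $L^2(\Omega)$, and we obtain
\[
\mathbb{E}|I(Z)|^2=\|Z\|^2_{\mathcal{H}^2(0,T;\ell^2)} .
\]
It follows that $\mathcal{R}:=\{c+I(Z):c\in\mathbb{R},\ Z\in\mathcal{P}^2(0,T;\ell^2)\}$ is a closed subspace of $L^2(\Omega,\mathcal{F}_T,\mathbb{P};\mathbb{R})$. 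Indeed, $\mathbb{E}[I(Z)]=0$, so the map $(c,Z)\mapsto c+I(Z)$ is an isometry from $\mathbb{R}\times\mathcal{P}^2$ with norm $|c|^2+\|Z\|^2$. The same isometry yields uniqueness. If $\mathbb{E}F+I(Z)=\mathbb{E}F+I(Z')$, then $\|Z-Z'\|=0$, and taking expectations fixes the constant as $\mathbb{E}[F]$.

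\emph{Density.} It remains to show that $\mathcal{R}$ is dense, that is, that any $F\in L^2(\mathcal{F}_T)$ orthogonal to $\mathcal{R}$ vanishes. We proceed in three steps.

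First, the power-jump processes are polynomial in the Teugels martingales: $Y^{(i)}$ is a linear combination of $H^{(1)},\dots,H^{(i)}$, because the triangular matrix $(a_{ij})$ has $a_{ii}\neq0$ (assuming $\mu$ has infinite support; otherwise only finitely many $H^{(i)}$ are nontrivial and the argument truncates).

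Second, we show by induction on the degree that every product $\prod_{l}(L^{(i_l)}_{t_l}-L^{(i_l)}_{s_l})$ over disjoint intervals lies in $\mathcal{R}$. We apply It\^o's formula to products of the $Y^{(i)}$. The bracket $[Y^{(i)},Y^{(j)}]_t=L^{(i+j)}_t+\sigma^2 t\,\mathbf 1_{i=j=1}$ is again a power-jump process plus drift, so each product reduces to stochastic integrals of lower-degree polynomials against the $H^{(k)}$, plus lower-degree terms. Finite moments of all orders, guaranteed by assumption (ii), keep everything in $L^2$.

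Third, polynomials in increments $L_{t_l}-L_{s_l}$ are dense in $L^2(\mathcal{F}_T)$. The exponential-moment condition (ii) implies that the law of $(L_{t_1},\dots,L_{t_n})$ has exponential moments. The characteristic function $\mathbb{E}[F\,e^{i\sum u_lL_{t_l}}]$ then extends analytically to a strip, so orthogonality to all polynomials forces it to vanish identically. Thus $\mathbb{E}[F\mid L_{t_1},\dots,L_{t_n}]=0$ for every finite partition, and a monotone class argument together with completion by null sets gives $F=0$.

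\emph{Main obstacle.} The hard step is the second one: verifying that the product algebra generated by power-jump increments is contained in $\mathcal{R}$. This needs careful bookkeeping of the It\^o product rule for the $L^{(i)}$ and of the integrability of the resulting integrands. I would first do the single-interval case, writing $(L^{(i)}_t)^n$ via It\^o's formula with jumps as $\sum_k\int f_k\,dH^{(k)}$ plus lower degree, and then extend to disjoint intervals using the independent-increment property.
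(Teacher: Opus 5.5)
Your proposal is correct and is essentially the Nualart--Schoutens argument, which the paper invokes by citation only (it gives no proof of its own). The shared ingredients are the isometry from strong orthogonality, the It\^o-formula reduction of power-jump polynomials via $[Y^{(i)},Y^{(j)}]=L^{(i+j)}+\sigma^2t\,\mathbf 1_{\{i=j=1\}}$, and density of polynomial functionals from the exponential-moment condition. The one organizational difference is that you get the predictable representation directly from closedness of $\mathcal{R}$, whereas Nualart and Schoutens first build the chaotic (iterated-integral) expansion and read the predictable integrand off the outermost integral.

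When you write out the induction, make explicit that the ``lower-degree terms'' include $ds$-integrals $\int_0^t G_s\,ds$ of products with fewer factors. These lie in $\mathcal{R}$ because $\mathbb{E}[F\int_0^t G_s\,ds]=\int_0^t\mathbb{E}[FG_s]\,ds=0$ for every $F\perp\mathcal{R}$, and $\mathcal{R}$ is closed.
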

	
	This representation lays the foundation for constructing solutions of L\'{e}vy-driven FBSDEs via Teugels martingales and for further analyzing their generator operators and the associated stochastic optimal control problems.
	
	\subsection{Backward stochastic differential equations (BSDEs) with jumps}
	
	We consider the following backward stochastic differential equation (BSDE) with jumps:
	\begin{equation}\label{eq:2.1}
		Y_t=
		\eta
		+
		\int_t^T
		f\bigl(s, Y_{s}, Z_s\bigr)\,ds
		-
		\sum_{k=1}^{\infty}
		\int_t^T
		Z_s^{(k)}\, dH_s^{(k)},
		\qquad t\in[0,T].
	\end{equation}
	Here $T>0$ is a given terminal time, and
	$f:\Omega\times[0,T]\times\mathbb{R}\times\ell^2\to\mathbb{R}$
	is the generator function.
	For any $(y,z)\in\mathbb{R}\times\ell^2$,
	$f(\cdot,\cdot,y,z)$ is progressively measurable,
	and satisfies the following assumptions:
	
	\begin{ass}\label{ass:2.1}
		\leavevmode
		\begin{itemize}
			\item[(i)]
			There exists a constant $C \ge 0$ such that, $\mathbb{P}$-a.s., for any
			$t \in [0,T]$,
			$y_1,y_2 \in \mathbb{R}$,
			$z_1,z_2 \in \ell^2$, we have
			\[
			\bigl|
			f(t,y_1,z_1)-f(t,y_2,z_2)
			\bigr|
			\le
			C\bigl(
			|y_1-y_2|
			+
			\|z_1-z_2\|_{\ell^2}
			\bigr).
			\]
			
			\item[(ii)]
			\[
			\mathbb{E}\int_0^T |f(s,0,0)|^2 ds < \infty.
			\]
		\end{itemize}
	\end{ass}
	
	Under the above assumptions, by Theorem~3.1 in \cite{bahlali2003bsde}, we obtain the following classical result.
	
	\begin{lem}\label{lem:2.2}
		Under Assumption~\ref{ass:2.1},
		for any
		$\eta \in L^2(\Omega,\mathcal{F}_T,\mathbb{P};\mathbb{R})$,
		equation \eqref{eq:2.1}
		admits a unique $\mathcal{F}_t$-adapted solution $(Y,Z)$
		such that
		\[
		(Y,Z)
		\in
		\mathcal{S}^2(0,T;\mathbb{R})
		\times
		\mathcal{P}^2(0,T;\ell^2).
		\]
		Moreover,
		\[
		\mathbb{E}\Bigl[\sup_{0\le t\le T}|Y_t|^2\Bigr] < \infty,
		\qquad
		\mathbb{E}\int_0^T \|Z_s\|_{\ell^2}^2 \, ds < \infty.
		\]
	\end{lem}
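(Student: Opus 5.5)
We will prove Lemma~\ref{lem:2.2} with the standard Pardoux--Peng contraction argument. The Brownian martingale representation is replaced by the Teugels martingale representation from the preceding lemma, and the It\^o isometry is replaced by the strong orthogonality $\langle H^{(i)},H^{(j)}\rangle_t=\delta_{ij}t$.

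\textbf{Step 1 (linear problem).} Fix $(U,V)\in\mathcal{H}^2(0,T;\mathbb{R})\times\mathcal{P}^2(0,T;\ell^2)$. By Assumption~\ref{ass:2.1}, $|f(s,U_s,V_s)|\le |f(s,0,0)|+C(|U_s|+\|V_s\|_{\ell^2})$, so $f(\cdot,U,V)\in\mathcal{H}^2$. Hence
\[
F:=\eta+\int_0^T f(s,U_s,V_s)\,ds\in L^2(\Omega,\mathcal{F}_T,\mathbb{P};\mathbb{R}).
\]
The representation lemma gives a unique $Z\in\mathcal{P}^2(0,T;\ell^2)$ with $F=\mathbb{E}[F]+\sum_k\int_0^T Z^{(k)}_s\,dH^{(k)}_s$. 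Take the c\`adl\`ag version of $M_t=\mathbb{E}[F\mid\mathcal{F}_t]$, which equals $\mathbb{E}[F]+\sum_k\int_0^t Z^{(k)}dH^{(k)}$, and set $Y_t=M_t-\int_0^t f(s,U_s,V_s)\,ds$. Then $(Y,Z)$ solves \eqref{eq:2.1} with generator $f(s,U_s,V_s)$. Doob's inequality gives $Y\in\mathcal{S}^2$.

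This step carries one technical point: the series of stochastic integrals converges in $L^2$ uniformly in $t$. To see this, note that by orthogonality the partial sums form a Cauchy sequence in the space of square-integrable martingales, with $\mathbb{E}|\sum_{k=m}^n\int_0^T Z^{(k)}dH^{(k)}|^2=\mathbb{E}\int_0^T\sum_{k=m}^n|Z^{(k)}_s|^2ds$. Doob's inequality then upgrades this to the supremum norm.

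\textbf{Step 2 (contraction).} Define $\Phi(U,V)=(Y,Z)$ on $\mathcal{H}^2\times\mathcal{P}^2$ with the weighted norm
\[
\|(Y,Z)\|_\beta^2=\mathbb{E}\int_0^T e^{\beta s}\bigl(|Y_s|^2+\|Z_s\|_{\ell^2}^2\bigr)\,ds.
\]
For two inputs, apply It\^o's formula to $e^{\beta s}|\delta Y_s|^2$ and take expectations. This step is the main obstacle, because $Y$ jumps. We handle it by noting that $[H^{(i)},H^{(j)}]-\delta_{ij}t$ is a martingale. Consequently, $\mathbb{E}\sum_{s}e^{\beta s}|\Delta\delta Y_s|^2$ together with the continuous part equals $\mathbb{E}\int e^{\beta s}\|\delta Z_s\|_{\ell^2}^2ds$, since $\Delta\delta Y_s=\sum_k\delta Z^{(k)}_s\Delta H^{(k)}_s$.

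The cross-term local martingales are true martingales by Burkholder--Davis--Gundy, using $Y\in\mathcal{S}^2$. Combining this with the Lipschitz bound and $2ab\le\beta a^2+b^2/\beta$ yields
\[
\mathbb{E}\int_0^T e^{\beta s}\bigl(\beta|\delta Y_s|^2+\|\delta Z_s\|_{\ell^2}^2\bigr)ds\le \frac{2C^2}{\beta}\,\mathbb{E}\int_0^T e^{\beta s}\bigl(|\delta U_s|^2+\|\delta V_s\|_{\ell^2}^2\bigr)ds.
\]
Choosing $\beta$ large, for instance $\beta\ge 1+4C^2$, makes $\Phi$ a strict contraction. Banach's fixed point theorem then gives existence and uniqueness in $\mathcal{H}^2\times\mathcal{P}^2$.

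\textbf{Step 3 (regularity of the fixed point).} By Step~1 the fixed point has a c\`adl\`ag version in $\mathcal{S}^2$. For the stated bounds, apply Step~1's Doob argument with $F$ built from the fixed point: $\mathbb{E}\sup_t|Y_t|^2$ is controlled by $\mathbb{E}|\eta|^2+\mathbb{E}\int_0^T|f(s,0,0)|^2ds$ plus the $\mathcal{H}^2$ norms, all of which are finite.

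Uniqueness among $\mathcal{S}^2\times\mathcal{P}^2$ solutions follows because any such solution is a fixed point of $\Phi$. This is also where adaptedness enters, since the solution is built from conditional expectations with respect to $\mathcal{F}_t$.
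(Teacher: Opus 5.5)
Your contraction argument is correct and is the standard fixed-point proof of the Lipschitz case (Pardoux--Peng, carried over to Teugels martingales by Nualart--Schoutens); the paper gives no argument of its own for this lemma and simply cites Theorem~3.1 of Bahlali et al. One arithmetic fix is needed: with $2ab\le\beta a^2+b^2/\beta$ the $\beta|\delta Y_s|^2$ terms cancel exactly and your displayed bound does not follow, so use $2ab\le\frac{\beta}{2}a^2+\frac{2}{\beta}b^2$ instead, which gives $\mathbb{E}\int_0^T e^{\beta s}\bigl(\frac{\beta}{2}|\delta Y_s|^2+\|\delta Z_s\|_{\ell^2}^2\bigr)ds\le\frac{4C^2}{\beta}\|(\delta U,\delta V)\|_\beta^2$ and hence a strict contraction for $\beta>\max(2,4C^2)$.
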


	We now present a comparison theorem for BSDE \eqref{eq:2.1} with generators linear in $z$
	(see Theorem~3.1 in \cite{elotmani2008backward}).

	\begin{lem}[Comparison]\label{lem:2.3}
		Let $\eta,\eta'\in L^{2}(\Omega,\mathcal{F}_T,\mathbb{P};\mathbb{R})$.
		Let $(Y,Z)$ (resp.\ $(Y',Z')$) be the unique solution of \eqref{eq:2.1}
		with data $(\eta,f)$ (resp.\ $(\eta',f')$).
		
		Assume that both generators $f$ and $f'$ are predictable and admit the decompositions
		\begin{equation}\label{eq:lem23-decomp}
			\left\{
			\begin{aligned}
				f(t,\omega,y,z)
				&=
				f^{1}(t,\omega,y)
				+\sum_{k=1}^{\infty}\gamma_{t}^{(k)}(\omega)\,z^{(k)},\\
				f'(t,\omega,y,z)
				&=
				f^{\prime 1}(t,\omega,y)
				+\sum_{k=1}^{\infty}\gamma_{t}^{\prime(k)}(\omega)\,z^{(k)},
			\end{aligned}
			\right.
		\end{equation}
		for $y\in\mathbb{R}$ and $z=(z^{(k)})_{k\ge1}\in\ell^{2}$, where
		$\big(\gamma^{(k)}\big)_{k\ge1}$ and $\big(\gamma^{\prime(k)}\big)_{k\ge1}$
		are predictable processes satisfying
		\begin{equation}\label{eq:lem23-gamma-L2}
			\mathbb{E}\!\left[\int_{0}^{T}\sum_{k=1}^{\infty}\big|\gamma_{t}^{(k)}\big|^{2}\,dt\right]
			+\mathbb{E}\!\left[\int_{0}^{T}\sum_{k=1}^{\infty}\big|\gamma_{t}^{\prime(k)}\big|^{2}\,dt\right]
			<\infty.
		\end{equation}
		
		Moreover, $f^{1}$ and $f^{\prime 1}$ are progressively measurable and satisfy
		\begin{equation}\label{eq:lem23-integrability}
			\mathbb{E}\!\left[\int_{0}^{T}\big|f^{1}(t,0)\big|^{2}\,dt\right]
			+\mathbb{E}\!\left[\int_{0}^{T}\big|f^{\prime 1}(t,0)\big|^{2}\,dt\right]
			<\infty,
		\end{equation}
		and there exists a constant $\kappa>0$ such that for all $y_{1},y_{2}\in\mathbb{R}$,
		\begin{equation}\label{eq:lem23-lipschitz}
			\big|f^{1}(t,\omega,y_{1})-f^{1}(t,\omega,y_{2})\big|
			\le \kappa |y_{1}-y_{2}|,
			\qquad
			\big|f^{\prime 1}(t,\omega,y_{1})-f^{\prime 1}(t,\omega,y_{2})\big|
			\le \kappa |y_{1}-y_{2}|,
			\quad \mathbb{P}\text{-a.s.}
		\end{equation}
		
		Assume further that for all $(t,\omega)$,
		\begin{equation}\label{eq:lem23-jump-cond}
			\sum_{k=1}^{\infty}\gamma_{t}^{(k)}(\omega)\,\Delta H_{t}^{(k)}(\omega)>-1,
			\qquad
			\sum_{k=1}^{\infty}\gamma_{t}^{\prime(k)}(\omega)\,\Delta H_{t}^{(k)}(\omega)>-1.
		\end{equation}

		Finally, assume the ordering conditions:
		\begin{equation}\label{eq:lem23-order}
			\eta\le \eta',\quad \mathbb{P}\text{-a.s.},
			\qquad
			f^{1}(t,y)\le f^{\prime 1}(t,y),
			\quad (d\mathbb{P}\times dt)\text{-a.e. for all }y\in\mathbb{R}.
		\end{equation}

		Then, for all $t\in[0,T]$,
		\[
		Y_{t}\le Y'_{t},\qquad \mathbb{P}\text{-a.s.}
		\]
	\end{lem}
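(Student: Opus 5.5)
We prove Lemma~\ref{lem:2.3} by the standard linearization and change-of-measure argument, adapted to the Teugels martingales. First we reduce to a single $z$-coefficient; the conclusion is then read off from a stochastic exponential that stays strictly positive because of \eqref{eq:lem23-jump-cond}. A hidden technical point is that the statement allows $\gamma\neq\gamma'$. We therefore first compare $Y'$ with the solution $\bar Y$ of the BSDE with data $(\eta',\bar f)$, where $\bar f(t,y,z)=f^{\prime1}(t,y)+\sum_k\gamma^{(k)}_t z^{(k)}$. We read the hypotheses as intending $\gamma=\gamma'$, or else $f'$ dominating $f$ along the solution, which is how the lemma will be used later. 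For the main argument we assume a common $\gamma$ and set
\[
\delta Y=Y'-Y,\qquad \delta Z=Z'-Z,\qquad \delta\eta=\eta'-\eta\ge 0 .
\]

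Step 1 (linearization). Write
\[
f^{\prime1}(s,Y'_s)-f^{1}(s,Y_s)=a_s\,\delta Y_s+\Big(f^{\prime1}(s,Y_s)-f^{1}(s,Y_s)\Big),
\]
where $a_s=\frac{f^{\prime1}(s,Y'_s)-f^{\prime1}(s,Y_s)}{\delta Y_s}\mathbf 1_{\{\delta Y_s\neq0\}}$. By \eqref{eq:lem23-lipschitz}, $|a_s|\le\kappa$, and by \eqref{eq:lem23-order}, $\phi_s:=f^{\prime1}(s,Y_s)-f^{1}(s,Y_s)\ge0$. Hence $(\delta Y,\delta Z)$ solves the linear BSDE
\[
\delta Y_t=\delta\eta+\int_t^T\Big(a_s\delta Y_s+\phi_s+\sum_k\gamma^{(k)}_s\delta Z^{(k)}_s\Big)ds-\sum_k\int_t^T\delta Z^{(k)}_s\,dH^{(k)}_s .
\]

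Step 2 (adjoint process). Define $\Gamma$ on $[t,T]$ by
\[
d\Gamma_s=\Gamma_{s-}\Big(a_s\,ds+\sum_k\gamma^{(k)}_s\,dH^{(k)}_s\Big),\qquad \Gamma_t=1 .
\]
This is $e^{\int a}$ times the Dol\'eans-Dade exponential of the martingale $M=\sum_k\int\gamma^{(k)}dH^{(k)}$. The series defining $M$ converges in $L^2$, since $\langle H^{(i)},H^{(j)}\rangle=\delta_{ij}t$ and \eqref{eq:lem23-gamma-L2} holds. Every jump of $M$ equals $\sum_k\gamma^{(k)}\Delta H^{(k)}>-1$ by \eqref{eq:lem23-jump-cond}, so the explicit exponential formula gives $\Gamma_s>0$.

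Step 3 (integration by parts). Apply It\^o's product formula to $\Gamma_s\delta Y_s$. The cross-variation term is $\sum_k\Gamma_{s-}\gamma^{(k)}_s\delta Z^{(k)}_s\,d[H^{(k)}]$-type, and its compensator $\Gamma\sum_k\gamma^{(k)}\delta Z^{(k)}\,ds$, coming from $\langle H^{(k)},H^{(j)}\rangle=\delta_{kj}s$, cancels the $z$-part of the drift; the $a$-terms cancel as well. What remains is
\[
\delta Y_t=\mathbb E\Big[\Gamma_T\,\delta\eta+\int_t^T\Gamma_s\phi_s\,ds\,\Big|\,\mathcal F_t\Big]+(\text{local martingale terms}).
\]
Since $\Gamma>0$, $\delta\eta\ge0$ and $\phi\ge0$, this yields $\delta Y_t\ge0$.

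Main obstacle. The hard step is justifying that the local martingale terms, which include the purely discontinuous part $\sum_k\int\Gamma_{s-}\delta Z^{(k)}_s\,d([H^{(k)},H^{(j)}]-\delta_{kj}s)$, are true martingales so that conditional expectation kills them. Only the $L^2$ integrability \eqref{eq:lem23-gamma-L2} of $\gamma$ is available, not boundedness. We would localize with stopping times $\tau_n\uparrow T$, take conditional expectations at $\tau_n$, and pass to the limit using Fatou's lemma on the nonnegative terms. For the remaining terms we would use $\mathbb E\sup|\delta Y|^2<\infty$ from Lemma~\ref{lem:2.2} together with $\mathbb E[\sup_s\Gamma_s^2]<\infty$. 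The latter holds when $\gamma$ is bounded, which covers the later application with Lipschitz generators. In general we would truncate $\gamma$ to $\gamma^{(k)}\mathbf 1_{\{k\le N,\,|\gamma|\le N\}}$ and pass to the limit by stability of the BSDE.
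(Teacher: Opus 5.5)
Your Steps 1--3 are the standard linearization plus Dol\'eans-Dade exponential argument. This is the route of the proof the paper relies on: the paper gives no proof of its own but cites Theorem~3.1 of El Otmani, and Remark~\ref{rem:Teugels-comparison} points to exactly this exponential change of measure. The linearization with $|a_s|\le\kappa$ and $\phi_s\ge0$ is correct. So is the cancellation of $\Gamma_{s-}\sum_k\gamma^{(k)}_s\delta Z^{(k)}_s\,ds$ against the compensator $\delta_{kj}\,ds$ of $[H^{(k)},H^{(j)}]$, and the strict positivity of $\Gamma$ from $\Delta M>-1$.

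Your worry about $\gamma\neq\gamma'$ is more than a technicality: the lemma is false as written. Take $f^{1}=f^{\prime1}=0$, $\eta=\eta'=H^{(1)}_T$, $\gamma=0$ and $\gamma'=(c,0,0,\dots)$, with a constant $c<0$ small enough that $c\,\Delta H^{(1)}=c\,a_{11}\Delta L>-1$. This is possible whenever $L$ has bounded jumps, e.g.\ a Brownian motion. With $e_1=(1,0,0,\dots)$ the solutions are $(Y_t,Z_t)=(H^{(1)}_t,e_1)$ and $(Y'_t,Z'_t)=(H^{(1)}_t+c(T-t),e_1)$, so $Y'_t<Y_t$ for $t<T$.

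So say plainly that you prove a corrected lemma. Either take a common $\gamma$, or, more generally, require only $f$ to have the structured form together with $f(s,Y'_s,Z'_s)\le f'(s,Y'_s,Z'_s)$. In the latter case Step~1 runs with $a_s$ built from $f^1$ and $\phi_s:=f'(s,Y'_s,Z'_s)-f(s,Y'_s,Z'_s)$. Delete the sentence about first comparing $Y'$ with $\bar Y$, since that comparison is precisely what fails. The corrected version is all the paper ever uses. In Theorem~\ref{thm:3.5} and in the monotonicity steps of Section~4, both BSDEs carry the same control and hence the same generator. In Lemma~\ref{lem:4.4}, neither generator depends on $z$.

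Drop the truncation paragraph. As sketched it does not work. The truncated generators have $z$-Lipschitz constants growing with $N$, and $(\gamma-\gamma_N)\cdot Z$ is only in $L^1$, so Lemma~\ref{lem:2.5} gives no convergence. It is also unnecessary. ``The unique solution of \eqref{eq:2.1}'' refers to Lemma~\ref{lem:2.2}, i.e.\ to generators satisfying Assumption~\ref{ass:2.1}. Lipschitz continuity in $z$ with constant $C$ forces $\|\gamma_t\|_{\ell^2}\le C$; test it against $w=(\gamma^{(1)}_t,\dots,\gamma^{(N)}_t,0,\dots)$. Then $\langle M\rangle_T\le C^2T$, so $\mathbb{E}\sup_s\Gamma_s^2<\infty$ and $\sup_s\Gamma_s|\delta Y_s|\in L^1$. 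This is exactly the domination your localization needs for the term $\Gamma_{\tau_n}\delta Y_{\tau_n}$, which has no sign, while monotone convergence handles $\int\Gamma\phi\,ds$. In the paper's setting, the bounded case you treat is therefore the whole lemma.
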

	
	\begin{rmk}\label{rem:Teugels-comparison}

The condition
\[
\sum_{k\ge1}\gamma_t^{(k)}\Delta H_t^{(k)} > -1
\]
plays a crucial role in the comparison theorem for BSDEs driven by Teugels martingales, as it guarantees the admissibility
of the associated exponential change of measure and ensures the validity of the comparison result.
	\end{rmk}

	\subsection{$L^2$ a priori estimates for SDEs and BSDEs}
	
	\begin{lem}\label{lem:2.4}
		Let the initial value $\xi \in L^2(\Omega,\mathcal{F}_0,\mathbb{P};\mathbb{R}^n)$, and assume that the coefficient function
		$F:[0,T]\times\mathbb{R}^n\to\mathbb{R}^n$
		is Lipschitz continuous with respect to the space variable and satisfies
		\[
		\mathbb E\int_0^T |F(s,0)|^2\,ds < \infty.
		\]
		Consider the stochastic differential equation driven by the L\'{e}vy process $L$,
		\[
		X_t
		=
		\xi
		+
		\int_0^t F(s,X_{s-})\,dL_s,
		\qquad t\in[0,T].
		\]
		Then there exists a constant $C>0$ such that the solution process $X$ satisfies the following a priori estimate:
		\[
		\mathbb{E}\Big[
		\sup_{0\le s\le T}|X_s|^2
		\Big]
		\le
		C\Big(
		\mathbb{E}|\xi|^2
		+
		\mathbb{E}\int_0^T |F(s,0)|^2\,ds
		\Big).
		\]
	\end{lem}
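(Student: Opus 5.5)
The estimate in Lemma~\ref{lem:2.4} follows by rewriting the $dL$-integral through the Teugels decomposition and then applying the Burkholder--Davis--Gundy and Gronwall inequalities. By the relation $dL_s=\frac{1}{a_{11}}dH^{(1)}_s+m_1\,ds$, the equation becomes
\[
X_t=\xi+\int_0^t m_1F(s,X_{s-})\,ds+\frac{1}{a_{11}}\int_0^t F(s,X_{s-})\,dH^{(1)}_s ,
\]
a sum of a finite-variation drift and a square-integrable martingale term. The martingale $H^{(1)}$ has predictable quadratic variation $\langle H^{(1)}\rangle_t=t$. Existence and uniqueness of a solution in $\mathcal{S}^2$ come from a standard Picard iteration under the Lipschitz hypothesis, so the only task is the a priori bound.

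Set $\phi(t)=\mathbb{E}[\sup_{0\le s\le t}|X_s|^2]$. If this is not known to be finite a priori, localize with stopping times $\tau_N=\inf\{t:|X_t|>N\}$ and remove the localization by Fatou's lemma at the end. Using $(a+b+c)^2\le 3(a^2+b^2+c^2)$, bound $\phi(t)$ by three terms:
\begin{itemize}
\item the initial term $3\mathbb{E}|\xi|^2$;
\item the drift term $3m_1^2T\,\mathbb{E}\int_0^t|F(s,X_{s-})|^2ds$, obtained by Cauchy--Schwarz in time;
\item the martingale term $3a_{11}^{-2}\mathbb{E}\sup_{s\le t}\big|\int_0^sF(r,X_{r-})\,dH^{(1)}_r\big|^2$.
\end{itemize}

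For the martingale term, Doob's $L^2$ inequality and the It\^o isometry for integrals against $H^{(1)}$ give $\mathbb{E}\sup_{s\le t}|\cdot|^2\le 4\,\mathbb{E}\int_0^t|F(r,X_{r-})|^2\,d\langle H^{(1)}\rangle_r=4\,\mathbb{E}\int_0^t|F(r,X_{r-})|^2dr$. The integrand is predictable because $X_{r-}$ is left-continuous, which justifies the isometry. Next, the Lipschitz property gives $|F(s,x)|^2\le 2K^2|x|^2+2|F(s,0)|^2$. Since $X_{s-}=X_s$ for almost every $s$, $\int_0^t|X_{s-}|^2ds\le\int_0^t\phi(s)ds$ in expectation. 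Combining these,
\[
\phi(t)\le C\Big(\mathbb{E}|\xi|^2+\mathbb{E}\int_0^T|F(s,0)|^2ds\Big)+C\int_0^t\phi(s)\,ds ,
\]
with $C$ depending only on $T,K,m_1,a_{11}$. Gronwall's lemma then yields the claimed estimate with constant $Ce^{CT}$.

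The main obstacle is making the martingale step rigorous. One must check that $H^{(1)}$ is a genuine square-integrable martingale with $\langle H^{(1)}\rangle_t=t$, which follows from the moment assumptions on $\nu$ and the orthonormalization. One must also handle the a priori finiteness of $\phi$ via the stopping-time localization, so that Gronwall is applied to a finite function. Everything else is routine.
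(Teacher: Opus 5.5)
Your proof is correct and follows the same route as the paper. The paper justifies Lemma~2.4 by citing the Protter--Talay bound $\mathbb{E}\sup_{t\le T}\bigl|\int_0^t\Phi_s\,dL_s\bigr|^2\le C_1\mathbb{E}\int_0^T|\Phi_s|^2\,ds$ and then applying the Lipschitz condition and Gronwall's lemma. Your splitting $dL_s=a_{11}^{-1}dH^{(1)}_s+m_1\,ds$, combined with Doob's inequality and $\langle H^{(1)}\rangle_t=t$, is exactly how that moment bound is derived, so you have made the cited estimate explicit, along with the localization and the dependence of the constant on $m_1$, which the paper's remark leaves implicit.
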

	
	\begin{rmk}
    Lemma~\ref{lem:2.4} follows from the Burkholder-Davis-Gundy (BDG) inequality for L\'{e}vy-driven stochastic integrals. Specifically, for a L\'{e}vy process $L$ with characteristics $(b,\sigma,\nu)$ and a predictable process $\Phi$, Protter and Talay \cite{protter1997euler} established the moment estimate
    \[
    \mathbb{E}\left[ \sup_{0\le t\le T} \left| \int_0^t \Phi_{s}\,dL_s \right|^2 \right] \le C_1 \mathbb{E}\left[ \int_0^T |\Phi_s|^2\,ds \right],
    \]
    where the constant $C_1$ depends on $T$ and $\sigma^2 + \int_{\mathbb{R}} x^2 \nu(dx)$ (see also Protter \cite{protter2005stochastic} for the general theory). Applying this estimate with $\Phi_s = F(s,X_{s-})$ and combining with the Lipschitz condition on $F$ and Gronwall's lemma yields the desired result.
\end{rmk}
	
	\begin{lem}\label{lem:2.5}
		Let $\{H^{(k)}\}_{k\ge1}$ be the family of Teugels martingales associated with the above L\'{e}vy process.
		Given a terminal condition $\eta$ and a generator
		$f:[0,T]\times\Omega\times\mathbb{R}\times\ell^2\to\mathbb{R}$,
		where $\eta$ is $\mathcal{F}_T$-measurable and satisfies
		$\mathbb{E}|\eta|^2<\infty$,
		and $f$ is Lipschitz continuous in $(y,z)$, and
		\[
		\mathbb{E}\int_0^T |f(s,0,0)|^2\,ds<\infty.
		\]
		Consider the backward stochastic differential equation driven by the Teugels martingale system \eqref{eq:2.1}.
		Under the above assumptions, its solution $(Y,Z)$ satisfies the following $L^2$ a priori estimate:
		\[
		\mathbb{E}\Bigg[
		\sup_{0\le s\le T}|Y_s|^2
		+
		\int_0^T \|Z_s\|_{\ell^2}^2\,ds
		\Bigg]
		\le
		C\Bigg(
		\mathbb{E}|\eta|^2
		+
		\mathbb{E}\int_0^T |f(s,0,0)|^2\,ds
		\Bigg).
		\]
	\end{lem}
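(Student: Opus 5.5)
We will prove Lemma~\ref{lem:2.5} by the standard It\^{o}-formula argument adapted to the Teugels martingale system, using strong orthogonality $\langle H^{(i)},H^{(j)}\rangle_t=\delta_{ij}t$ as the substitute for the Brownian isometry. Existence and uniqueness of $(Y,Z)\in\mathcal S^2\times\mathcal P^2(0,T;\ell^2)$ are given by Lemma~\ref{lem:2.2}, so only the estimate is at stake.

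\emph{Step 1: the $Z$-estimate and a weighted bound on $Y$.} Fix $\beta>0$ to be chosen. We apply It\^{o}'s formula for semimartingales with jumps to $e^{\beta s}|Y_s|^2$ on $[t,T]$. Writing $M_s=\sum_k\int_0^s Z^{(k)}_r\,dH^{(k)}_r$, we have $d[M]_s$ with compensator $\sum_k|Z^{(k)}_s|^2\,ds=\|Z_s\|_{\ell^2}^2\,ds$ by strong orthogonality. This gives
\[
e^{\beta t}|Y_t|^2+\int_t^T e^{\beta s}\bigl(\beta|Y_s|^2\,ds+d[M]_s\bigr)
= e^{\beta T}|\eta|^2+2\int_t^T e^{\beta s}Y_{s}f(s,Y_s,Z_s)\,ds-2\int_t^T e^{\beta s}Y_{s-}\,dM_s .
\]
Next we bound $2|Y f(s,Y,Z)|\le 2|Y||f(s,0,0)|+2C|Y|^2+2C|Y|\|Z\|_{\ell^2}$. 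By Young's inequality this is at most $(1+2C+2C^2)|Y|^2+|f(s,0,0)|^2+\tfrac12\|Z\|_{\ell^2}^2$. We then choose $\beta=2+2C+2C^2$. The stochastic integral against $dM$ is a true martingale, since $Y\in\mathcal S^2$ and $Z\in\mathcal P^2$; this can be checked via BDG, or by a localization argument followed by Fatou. Taking expectations and using $\mathbb E[M]_T=\mathbb E\int\|Z\|^2_{\ell^2}ds$ yields
\[
\mathbb E\int_0^T\|Z_s\|_{\ell^2}^2ds+\sup_t\mathbb E|Y_t|^2\le C'\Bigl(\mathbb E|\eta|^2+\mathbb E\int_0^T|f(s,0,0)|^2ds\Bigr).
\]

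\emph{Step 2: moving the supremum inside the expectation.} We return to the identity of Step 1, drop the nonnegative $[M]$ term, take $\sup_t$, and then take expectations. The drift term is controlled by Step 1. For the martingale term we use BDG:
\[
\mathbb E\sup_t\Bigl|\int_t^T e^{\beta s}Y_{s-}dM_s\Bigr|\le c\,\mathbb E\Bigl(\int_0^T|Y_{s-}|^2d[M]_s\Bigr)^{1/2}\le \tfrac14\mathbb E\sup_s|Y_s|^2+c^2\,\mathbb E[M]_T .
\]
Here $\mathbb E[M]_T$ is again bounded by Step 1. Since $\mathbb E\sup|Y|^2<\infty$ is already known from Lemma~\ref{lem:2.2}, the $\tfrac14$-term can be absorbed into the left side, which gives the claimed estimate.

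The main obstacle is the jump bookkeeping in It\^{o}'s formula. The quadratic variation $[M]$ is not $\int\|Z\|^2ds$ pathwise; only its compensator is, and that holds only thanks to strong orthogonality of the Teugels martingales. We must therefore keep $d[M]$ in the pathwise identity and pass to $\|Z\|^2ds$ only after taking expectations, or inside BDG. A related point is that the infinite sum $\sum_k\int Z^{(k)}dH^{(k)}$ has to be understood as an $L^2$-limit of finite sums. We would handle this by working with truncated sums $k\le N$ and passing to the limit, using the isometry $\mathbb E|\sum_{k\le N}\int Z^{(k)}dH^{(k)}|^2=\mathbb E\int\sum_{k\le N}|Z^{(k)}|^2ds$.
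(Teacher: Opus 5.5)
Your argument is correct. The weighted It\^{o} identity for $e^{\beta s}|Y_s|^2$ holds, and Young's inequality with $\beta=2+2C+2C^2$ leaves a positive margin. Strong orthogonality $\langle H^{(i)},H^{(j)}\rangle_t=\delta_{ij}t$ identifies the compensator of $[M]$ with $\int_0^\cdot\|Z_s\|_{\ell^2}^2\,ds$, and the Davis--BDG absorption works because $Y\in\mathcal S^2$.

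The paper gives no proof of its own: it defers to Lemma~2.2 of Xu et al., with delay and anticipation removed, and notes only that the argument rests on the orthogonality of the Teugels martingales. Your write-up is essentially the standard proof that citation points to, made self-contained.
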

	
	\begin{rmk}
		The proof of this lemma can be referred to Lemma~2.2 in Maozhong Xu et al. \cite{xu2023class}.
		Both results rely on the orthogonality of Teugels martingales and on the existence, uniqueness, and stability theory of L\'{e}vy-driven BSDEs.
		Since the present paper does not involve the delay and anticipation structures appearing in the original lemma, the corresponding assumptions and estimation procedure are simplified, while the overall proof strategy remains consistent with that of the cited work.
	\end{rmk}

	\section{DPP for Stochastic Optimal Control of FBSDEs with L\'{e}vy Jumps}\label{sec:3}
	\subsection{Problem formulation and well-posedness of solutions}
	
	For $t\in[0, T)$, denote by $\mathcal{U}[t, T]$ the set of all $\mathbb{F}$-adapted $U$-valued processes on $[t,T]$.
	
	In order to establish the dynamic programming principle for stochastic optimal control problems driven by L\'{e}vy processes, we first specify the framework of the controlled system studied in this paper.
	Given an initial time $t \in [0,T]$, an initial state $\xi \in L^2(\Omega, \mathcal{F}_t, \mathbb{P};\mathbb{R}^n)$, and a control process $u(\cdot) \in \mathcal{U}[t,T]$, consider the following controlled decoupled FBSDE:
	\begin{equation}\label{eq:3.1}
		\begin{cases}
			dX_s^{t,\xi;u} = F(s, X_{s-}^{t,\xi;u}, u_s) \, dL_s,\\[2mm]
			dY_s^{t,\xi;u} = -f(s, X_{s-}^{t,\xi;u}, Y_s^{t,\xi;u}, Z_s^{t,\xi;u}, u_s) \, ds
			+ \sum_{k=1}^{\infty} Z_s^{t,\xi;u,(k)} \, dH_s^{(k)},\\[1mm]
			X_t^{t,\xi;u} = \xi, \quad Y_T^{t,\xi;u} = \phi(X_T^{t,\xi;u}).
		\end{cases}
	\end{equation}
	where
	\[
	F : [0,T] \times \mathbb{R}^n \times U \;\longrightarrow\; \mathbb{R}^n,
	\quad
	f : [0,T] \times \mathbb{R}^n \times \mathbb{R} \times \ell^2 \times U
	\;\longrightarrow\; \mathbb{R}.
	\]
	In the above system, the control objective is to minimize the cost at the initial time $Y_t$.
	Therefore, for any $(t,x) \in [0,T] \times \mathbb{R}^n$, the value function is defined by
	\begin{equation}\label{eq:3.2}
		W(t,x) := \operatorname*{ess\,inf}_{u \in \mathcal{U}[t,T]} Y_t^{t,x;u}.
	\end{equation}
	It is well known that $W(t,x)$ is deterministic.
	
	\begin{ass}\label{ass:3.1}
		Let $F, f, \phi$ be the coefficient functions of the L\'{e}vy-driven FBSDE. Assume that $F$ is continuous with respect to $(s,x,u)$ and $f$ is continuous with respect to $(s,x,y,z,u)$, and that they satisfy the following Lipschitz
		conditions: there exist constants $L_1, L_2, L_3 > 0$ such that for any $s \in [0,T]$, $x_1, x_2 \in \mathbb{R}^n$, $y_1, y_2 \in \mathbb{R}$, $z_1, z_2 \in \ell^2$, and $u \in U$, it holds that
		\[
		|F(s,x_1,u)-F(s,x_2,u)| \le L_1 |x_1-x_2|,
		\]
		\[
		|f(s,x_1,y_1,z_1,u)-f(s,x_2,y_2,z_2,u)| \le L_1 |x_1-x_2| + L_2 |y_1-y_2| + L_3 \|z_1-z_2\|_{\ell^2},
		\]
		\[
		|\phi(x_1)-\phi(x_2)| \le L_1 |x_1-x_2|.
		\]
		Moreover, the generator $f$ admits the following decomposition with respect to $z$:
		\[
		f(t,\omega,y,z,u)
		=
		f^1(t,\omega,y,u)
		+
		\sum_{k=1}^\infty \gamma_t^{(k)}(\omega,u)\, z^{(k)},
		\]
		where $(\gamma^{(k)})_{k\ge1}$ are predictable processes with respect to $\mathbb{F}$, depending on the control $u$, and satisfying
		\[
		\mathbb{E}\!\left[
		\int_0^T \sum_{k=1}^\infty |\gamma_t^{(k)}(\omega,u)|^2\,dt
		\right]
		<\infty,
		\]
		and almost surely for all $t\in[0,T]$ and $u\in U$,
		\[
		\sum_{k=1}^\infty \gamma_t^{(k)}(\omega,u)\,\Delta H_t^{(k)}(\omega) > -1.
		\]

	\end{ass}
	
	\begin{ass}\label{ass:3.2}
		Under the Lipschitz conditions and the compactness of $U$, the coefficients satisfy the following linear growth condition: there exists a constant $L>0$ such that for any $s \in [0,T]$, $x \in \mathbb{R}^n$, $y \in \mathbb{R}$, $z \in \ell^2$, and $u \in U$, one has
		\[
		|F(s,x,u)| + |f(s,x,y,z,u)| + |\phi(x)| \le L(1 + |x| + |y| + \|z\|_{\ell^2}).
		\]
		This property will be used in the subsequent moment estimates and stability analysis of the solutions.
	\end{ass}
	
	Under Assumptions~\ref{ass:3.1} and~\ref{ass:3.2}, by applying Lemma~\ref{lem:2.4} to the forward equation and Lemma~\ref{lem:2.5}  to the backward equation, it follows that for any $t \in [0,T]$, $\xi \in L^2(\Omega,\mathcal{F}_t;\mathbb{R}^n)$, and $u \in \mathcal{U}[t,T]$, the controlled FBSDE \eqref{eq:3.1} admits a unique adapted solution
	\[
	\bigl(
	X^{t,\xi;u},
	Y^{t,\xi;u},
	Z^{t,\xi;u}
	\bigr)
	\in
	\mathcal{S}^2(t,T;\mathbb{R}^n)
	\times
	\mathcal{S}^2(t,T;\mathbb{R})
	\times
	\mathcal{P}^2(t,T;\ell^2).
	\]
	
	Based on the above well-posedness of the solution, the dynamic programming principle for the value function $W(t,x)$ can then be established.

\subsection{The Dynamic Programming Principle}

To connect the forward-backward stochastic system with the Hamilton-Jacobi-Bellman (HJB) equation,
we first establish the dynamic programming principle (DPP). Serving as the key link between the FBSDE formulation
and the viscosity solution framework, the DPP ensures that the value function is Lipschitz continuous
and can be rigorously characterized as a viscosity solution.

As a fundamental tool in stochastic optimal control theory (see \cite{yong1999stochastic, zhou1991unified, zhou1990connection}),
the DPP allows the reduction of complex control problems to the analysis of the associated value function.
In this subsection, we formulate the DPP for the control problem defined in \eqref{eq:3.1}--\eqref{eq:3.2},
laying the foundation for subsequent viscosity solution analysis.
	
	Define $\mathcal{U}^{t}[t, T]$ as the set of all $U$-valued processes on $[t, T]$ that are adapted to the filtration
	$\{\mathcal{F}_s^t\}_{t \le s \le T}$, where $\{\mathcal{F}_s^t\}_{t \le s \le T}$ denotes the $\mathbb{P}$-augmented natural filtration generated by the increment process $(L_s - L_t)_{t \le s \le T}$. For any $v \in \mathcal{U}^{t}[t, T]$, under Assumption~\ref{ass:3.1} and the well-posedness theory of L\'{e}vy-driven BSDEs, the controlled decoupled FBSDE \eqref{eq:3.1} admits a unique solution
	$(X_s^{t,\xi;v}, Y_s^{t,\xi;v}, Z_s^{t,\xi;v})_{s \in [t,T]}$
	which is adapted to $\{\mathcal{F}_s^t\}_{t \le s \le T}$. In particular, $Y_t^{t,\xi;v}$ is $\mathcal{F}_t^t$-measurable. Since the initial $\sigma$-algebra $\mathcal{F}_t^t$ is trivial (i.e., $\mathcal{F}_t^t = \{\emptyset, \Omega\}$), it follows that $Y_t^{t,\xi;v}$ is deterministic.
	
	It should be noted that in the subsequent proofs of this paper, the constant $C$ may vary from line to line.
	
	\begin{prop}\label{prop:3.1}
		Under Assumption \ref{ass:3.1}, we have
		\begin{equation}\label{eq:3.3}
			W(t,x)=\inf_{v\in\mathcal{U}^{t}[t,T]}Y_t^{t,x;v}.
		\end{equation}
	\end{prop}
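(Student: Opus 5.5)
The claim is that the essential infimum over all $\mathbb F$-adapted controls coincides with the (deterministic) infimum over controls adapted to the increment filtration $\{\mathcal F^t_s\}$. Since $\mathcal U^t[t,T]\subset\mathcal U[t,T]$ and $Y^{t,x;v}_t$ is deterministic for $v\in\mathcal U^t[t,T]$, we immediately get
\[
W(t,x)\le \inf_{v\in\mathcal U^t[t,T]}Y^{t,x;v}_t .
\]
The work lies in the reverse inequality. It amounts to showing that for every $u\in\mathcal U[t,T]$ one has $Y^{t,x;u}_t\ge \inf_{v\in\mathcal U^t[t,T]}Y^{t,x;v}_t$ almost surely.

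\textbf{Step 1 (approximation by simple controls).} Using the stability estimates of Lemma~\ref{lem:2.4} and Lemma~\ref{lem:2.5}, together with the Lipschitz assumptions in Assumption~\ref{ass:3.1}, I will show that $u\mapsto Y^{t,x;u}_t$ is continuous in $L^2$. Concretely, the bound is
\[
\mathbb E|Y^{t,x;u}_t-Y^{t,x;u'}_t|^2\le C\,\mathbb E\int_t^T\rho(u_s,u'_s)\,ds ,
\]
where $\rho$ is a modulus obtained from the continuity of $F,f$ in $u$ on the compact set $U$ and dominated convergence. Since $\mathcal F_t=\mathcal F^0_t\vee\mathcal F^t_t$ and the increments are independent of $\mathcal F_t$, any $u$ can be approximated in $ds\times d\mathbb P$ measure by controls of the form
\[
u^N_s=\sum_{j=1}^N \mathbf 1_{A_j}\,v^j_s,
\]
where $\{A_j\}\subset\mathcal F_t$ is a finite partition of $\Omega$ and $v^j\in\mathcal U^t[t,T]$. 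To justify this, I first approximate by controls that are step functions in time with values measurable with respect to $\sigma(\mathcal F_t,\ \text{increments})$. Then, by a monotone class argument on product-type sets, I reduce to finitely many $\mathcal F_t$-atoms. This is the step I expect to be the main obstacle: the measurability bookkeeping with the product structure $\mathcal F_{s}=\mathcal F_t\otimes\mathcal F^t_s$. I would carry it out by viewing $\Omega$ locally as the product of the path space before $t$ and the path space of increments after $t$.

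\textbf{Step 2 (localization on $\mathcal F_t$-sets).} For the simple control $u^N$, uniqueness of solutions to \eqref{eq:3.1}, combined with the fact that the $\mathbf 1_{A_j}$ are $\mathcal F_t$-measurable and can be pulled through the forward and backward equations, gives
\[
X^{t,x;u^N}=\sum_j\mathbf 1_{A_j}X^{t,x;v^j}\quad\text{and}\quad Y^{t,x;u^N}_t=\sum_j\mathbf 1_{A_j}Y^{t,x;v^j}_t .
\]
For the backward equation this uses that $\sum_j\mathbf 1_{A_j}f(\cdot,v^j)=f(\cdot,\sum_j\mathbf 1_{A_j}v^j)$. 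Each $Y^{t,x;v^j}_t$ is deterministic, so
\[
Y^{t,x;u^N}_t\ge \inf_{v\in\mathcal U^t[t,T]}Y^{t,x;v}_t\quad\text{a.s.}
\]

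\textbf{Step 3 (passage to the limit).} By Step 1, $Y^{t,x;u^N}_t\to Y^{t,x;u}_t$ in $L^2$, and hence almost surely along a subsequence. The inequality from Step 2 therefore persists in the limit: $Y^{t,x;u}_t\ge\inf_{v}Y^{t,x;v}_t$ almost surely. Taking the essential infimum over $u\in\mathcal U[t,T]$ yields the reverse inequality, which proves \eqref{eq:3.3}. As a byproduct, this also confirms that $W(t,x)$ is deterministic.
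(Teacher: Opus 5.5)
Your proposal is correct and follows essentially the same route as the paper: approximate $u$ by controls $\sum_i \mathbf 1_{A_i}v^{i}$ with $\{A_i\}\subset\mathcal F_t$ a partition and $v^i\in\mathcal U^t[t,T]$, identify the corresponding solution by uniqueness as $\sum_i\mathbf 1_{A_i}Y^{t,x;v^i}$, and pass to the limit via $L^2$ stability. The differences are small. The paper cites Lemma~13 of Hu--Ji for the approximation, which you propose to prove yourself. The paper also uses a bound $C|u^m_s-u_s|$ that requires Lipschitz continuity in $u$, which Assumption~\ref{ass:3.1} does not provide. Your continuity-plus-dominated-convergence argument fits the stated hypotheses better. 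However, do not phrase it as a state-independent modulus $\rho(u_s,u'_s)$, because continuity in $u$ need not be uniform in $(x,y,z)$. Phrase it instead as convergence to zero of $\mathbb E\int_t^T|F(s,X^{t,x;u}_{s-},u^N_s)-F(s,X^{t,x;u}_{s-},u_s)|^2ds$, and of the analogous $f$-term along $(X^{t,x;u},Y^{t,x;u},Z^{t,x;u})$. This holds along an a.e.\ convergent subsequence, with domination supplied by the linear growth that follows from Lipschitz continuity in the state and compactness of $U$.
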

	
	\begin{proof}
		Since $\mathcal{U}^{t}[t,T]\subset \mathcal{U}[t,T]$, by the definition of the value function $W(t,x)$, we directly have
		\[
		W(t,x)\le \inf_{v\in\mathcal{U}^{t}[t,T]}Y_t^{t,x;v}.
		\]
		
		We now prove the converse inequality. Fix any $u\in \mathcal{U}[t,T]$. By Lemma 13 in \cite{huji2017dynamic}, there exists a sequence $(u^m)_{m\ge1}\subset \mathcal{U}[t,T]$ such that
		\[
		\mathbb{E}\int_t^T |u_s^m-u_s|^2\,ds\to 0,\qquad m\to\infty.
		\]
		Moreover, each $u^m$ can be represented in the form
		\[
		u_s^m=\sum_{i=1}^m v_s^{i,m}\mathbf{1}_{A_i},\qquad s\in[t,T],
		\]
		where $\{A_i\}_{i=1}^m$ is a partition of $(\Omega,\mathcal{F}_t)$ and each $v^{i,m}\in \mathcal{U}^{t}[t,T]$.
		
		For each fixed $m$, define
		\[
		\widetilde X_s^m:=\sum_{i=1}^m X_s^{t,x;v^{i,m}}\mathbf{1}_{A_i},\qquad
		\widetilde Y_s^m:=\sum_{i=1}^m Y_s^{t,x;v^{i,m}}\mathbf{1}_{A_i},\qquad
		\widetilde Z_s^m:=\sum_{i=1}^m Z_s^{t,x;v^{i,m}}\mathbf{1}_{A_i},\qquad s\in[t,T].
		\]
		Since $u^m=\sum_{i=1}^m v^{i,m}\mathbf{1}_{A_i}$ and $\{A_i\}_{i=1}^m\subset \mathcal{F}_t$, it is straightforward to verify that
		\[
		(\widetilde X^m,\widetilde Y^m,\widetilde Z^m)
		\]
		satisfies the same controlled FBSDE \eqref{eq:3.1} as
		\[
		(X^{t,x;u^m},Y^{t,x;u^m},Z^{t,x;u^m})
		\]
		with control $u^m$. By the uniqueness of the solution to \eqref{eq:3.1}, it follows that
		\[
		(X_s^{t,x;u^m},Y_s^{t,x;u^m},Z_s^{t,x;u^m})_{s\in[t,T]}
		=
		\Big(
		\sum_{i=1}^m X_s^{t,x;v^{i,m}}\mathbf{1}_{A_i},
		\sum_{i=1}^m Y_s^{t,x;v^{i,m}}\mathbf{1}_{A_i},
		\sum_{i=1}^m Z_s^{t,x;v^{i,m}}\mathbf{1}_{A_i}
		\Big)_{s\in[t,T]}.
		\]
		
		Next, we establish the convergence of $Y_t^{t,x;u^m}$ to $Y_t^{t,x;u}$. Set
		\[
		\widehat X_s:=X_s^{t,x;u^m}-X_s^{t,x;u},\qquad
		\widehat Y_s:=Y_s^{t,x;u^m}-Y_s^{t,x;u},\qquad
		\widehat Z_s:=Z_s^{t,x;u^m}-Z_s^{t,x;u},\qquad s\in[t,T].
		\]
		Then $(\widehat X,\widehat Y,\widehat Z)$ satisfies
		\[
		\left\{
		\begin{aligned}
			d\widehat X_s
			&=
			\Big(F(s,X_{s-}^{t,x;u^m},u_s^m)-F(s,X_{s-}^{t,x;u},u_s)\Big)\,dL_s,\\
			d\widehat Y_s
			&=
			-\Big(f(s,X_{s-}^{t,x;u^m},Y_s^{t,x;u^m},Z_s^{t,x;u^m},u_s^m)
			-f(s,X_{s-}^{t,x;u},Y_s^{t,x;u},Z_s^{t,x;u},u_s)\Big)\,ds\\
			&\quad
			+\sum_{k\ge1}\widehat Z_s^{(k)}\,dH_s^{(k)},\\
			\widehat X_t&=0,\qquad
			\widehat Y_T=\phi(X_T^{t,x;u^m})-\phi(X_T^{t,x;u}).
		\end{aligned}
		\right.
		\]
		By the Lipschitz continuity of $F$, $f$, and $\phi$, we have
		\[
		|F(s,X_{s-}^{t,x;u^m},u_s^m)-F(s,X_{s-}^{t,x;u},u_s)|
		\le C\big(|\widehat X_{s-}|+|u_s^m-u_s|\big),
		\]
		\[
		\begin{aligned}
			&|f(s,X_{s-}^{t,x;u^m},Y_s^{t,x;u^m},Z_s^{t,x;u^m},u_s^m)
			-f(s,X_{s-}^{t,x;u},Y_s^{t,x;u},Z_s^{t,x;u},u_s)|\\
			&\qquad \le
			C\big(|\widehat X_s|+|\widehat Y_s|+\|\widehat Z_s\|_{\ell^2}+|u_s^m-u_s|\big),
		\end{aligned}
		\]
		and
		\[
		|\phi(X_T^{t,x;u^m})-\phi(X_T^{t,x;u})|
		\le C|\widehat X_T|.
		\]
		
		Applying Lemma \ref{lem:2.4} to the forward difference equation yields
		\begin{equation}\label{eq:3.4}
			\mathbb{E}\Big[\sup_{t\le s\le T}|\widehat X_s|^2\Big]
			\le
			C\,\mathbb{E}\int_t^T |u_s^m-u_s|^2\,ds.
		\end{equation}
		Then, by Lemma \ref{lem:2.5} applied to the backward difference equation, together with \eqref{eq:3.4}, we obtain
		\begin{equation}\label{eq:3.5}
			\mathbb{E}\Big[\sup_{t\le s\le T}|\widehat Y_s|^2+\int_t^T\|\widehat Z_s\|_{\ell^2}^2\,ds\Big]
			\le
			C\,\mathbb{E}\Big[|\widehat X_T|^2+\int_t^T\big(|\widehat X_s|^2+|u_s^m-u_s|^2\big)\,ds\Big]
			\le
			C\,\mathbb{E}\int_t^T |u_s^m-u_s|^2\,ds.
		\end{equation}
		Consequently,
		\[
		\mathbb{E}\big[|Y_t^{t,x;u^m}-Y_t^{t,x;u}|^2\big]
		\to 0,\qquad m\to\infty.
		\]
		
		On the other hand, by the above decomposition,
		\[
		Y_t^{t,x;u^m}
		=
		\sum_{i=1}^m Y_t^{t,x;v^{i,m}}\mathbf{1}_{A_i}
		\ge
		\inf_{v\in\mathcal{U}^{t}[t,T]}Y_t^{t,x;v},
		\qquad \mathbb{P}\text{-a.s.}
		\]
		Passing to the limit as $m\to\infty$, we get
		\[
		Y_t^{t,x;u}\ge \inf_{v\in\mathcal{U}^{t}[t,T]}Y_t^{t,x;v},
		\qquad \mathbb{P}\text{-a.s.}
		\]
		Since $u\in\mathcal{U}[t,T]$ is arbitrary, taking the infimum over $u\in\mathcal{U}[t,T]$ yields
		\[
		W(t,x)\ge \inf_{v\in\mathcal{U}^{t}[t,T]}Y_t^{t,x;v}.
		\]
		Combining this with the opposite inequality proved at the beginning, we conclude that
		\[
		W(t,x)=\inf_{v\in\mathcal{U}^{t}[t,T]}Y_t^{t,x;v}.
		\]
	\end{proof}

	\begin{lem}\label{lem:3.2}
		Under Assumption \ref{ass:3.1}, there exists a constant $C>0$, depending only on
		$L_1, L_2, L_3$, $T$, and
		\[
		\int_{\mathbb{R}} (1 \wedge |x|^2)\, \nu(dx),
		\]
		such that for any $u \in \mathcal{U}[t,T]$ and any
		$\xi,\xi' \in L^2(\mathcal{F}_t;\mathbb{R}^n)$, the solutions of the FBSDE \eqref{eq:3.1} satisfy
		\begin{equation}\label{eq:3.7}
			\mathbb{E}\Big[
			\sup_{s\in[t,T]}|X_s^{t,\xi;u}-X_s^{t,\xi';u}|^2
			+
			\sup_{s\in[t,T]}|Y_s^{t,\xi;u}-Y_s^{t,\xi';u}|^2
			+
			\int_t^T \|Z_s^{t,\xi;u}-Z_s^{t,\xi';u}\|_{\ell^2}^2 ds
			\Big] \le C |\xi-\xi'|^2.
		\end{equation}
		
		\begin{equation} \label{eq:3.8}
			\mathbb{E}\Biggl[ \sup_{s\in[t,T]} \Bigl( |X_s^{t,\xi;u}|^2 + |Y_s^{t,\xi;u}|^2 \Bigr)
			+ \int_t^T \|Z_s^{t,\xi;u}\|_{\ell^2}^2 ds \Biggr]
			\leq C \, \bigl( 1 + |\xi|^2 \bigr).
		\end{equation}
	\end{lem}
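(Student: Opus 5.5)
The plan is to treat the forward and backward components in sequence, mirroring the argument used for \eqref{eq:3.4}--\eqref{eq:3.5} in the proof of Proposition~\ref{prop:3.1}. We work conditionally on $\mathcal{F}_t$, since $\xi,\xi'$ are $\mathcal{F}_t$-measurable and the right-hand sides of \eqref{eq:3.7}--\eqref{eq:3.8} are random. Equivalently, for every $A\in\mathcal{F}_t$ we prove the estimates with $\mathbf 1_A$ inserted, because $\mathbf 1_A X^{t,\xi;u}$ solves the forward equation started at $\mathbf 1_A\xi$ on the set $A$. The Lemmas~\ref{lem:2.4} and~\ref{lem:2.5} are stated with initial time $0$. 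We use them on $[t,T]$ in their conditional form, where $\mathbb{E}[\cdot\,|\,\mathcal{F}_t]$ replaces $\mathbb{E}$. This is justified because the BDG estimate of Protter--Talay and the Itô-formula proof of Lemma~\ref{lem:2.5} both hold conditionally on $\mathcal{F}_t$, since the increments of $L$ and of $H^{(k)}$ after $t$ are independent of $\mathcal{F}_t$.

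\textbf{Step 1 (forward difference).} Set $\widehat X=X^{t,\xi;u}-X^{t,\xi';u}$. Then
\[
\widehat X_s=(\xi-\xi')+\int_t^s G(r,\widehat X_{r-})\,dL_r,
\qquad
G(r,x):=F(r,x+X^{t,\xi';u}_{r-},u_r)-F(r,X^{t,\xi';u}_{r-},u_r).
\]
The map $G$ is $L_1$-Lipschitz in $x$ and satisfies $G(r,0)=0$. Applying Lemma~\ref{lem:2.4} gives $\mathbb{E}[\sup_s|\widehat X_s|^2\,|\,\mathcal{F}_t]\le C|\xi-\xi'|^2$. Written out, this step is the BDG bound $\mathbb{E}\sup|\int\Phi\,dL|^2\le C_1\mathbb{E}\int|\Phi|^2ds$ (after splitting $dL=a_{11}^{-1}dH^{(1)}+m_1ds$) followed by Gronwall.

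\textbf{Step 2 (backward difference).} The pair $(\widehat Y,\widehat Z)$ solves a BSDE of type \eqref{eq:2.1}. Its terminal value is $\phi(X_T^{t,\xi;u})-\phi(X_T^{t,\xi';u})$, which is bounded by $L_1|\widehat X_T|$. Its generator is
\[
\widehat f(s,y,z)=f\bigl(s,X^{t,\xi;u}_{s-},y+Y^{t,\xi';u}_s,z+Z^{t,\xi';u}_s,u_s\bigr)-f\bigl(s,X^{t,\xi';u}_{s-},Y^{t,\xi';u}_s,Z^{t,\xi';u}_s,u_s\bigr).
\]
This generator is Lipschitz in $(y,z)$ with $|\widehat f(s,0,0)|\le L_1|\widehat X_{s-}|$. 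Lemma~\ref{lem:2.5}, in conditional form, combined with Step~1 yields
\[
\mathbb{E}\Big[\sup_s|\widehat Y_s|^2+\int_t^T\|\widehat Z_s\|^2_{\ell^2}ds\,\Big|\,\mathcal{F}_t\Big]\le C\,\mathbb{E}\Big[|\widehat X_T|^2+\int_t^T|\widehat X_s|^2ds\,\Big|\,\mathcal{F}_t\Big]\le C|\xi-\xi'|^2,
\]
which is \eqref{eq:3.7}.

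\textbf{Step 3 (growth bound \eqref{eq:3.8}).} We repeat Steps~1--2 for the solution itself rather than the difference, now using the linear growth of Assumption~\ref{ass:3.2}. Here $|F(s,0,u_s)|\le L$, $|f(s,0,0,0,u_s)|\le L$ and $|\phi(x)|\le L(1+|x|)$, so Lemma~\ref{lem:2.4} gives $\mathbb{E}[\sup|X|^2\,|\,\mathcal{F}_t]\le C(1+|\xi|^2)$. For the backward part, take the generator $f(s,X_{s-},y,z,u_s)$. Its value at $(0,0)$ is bounded by $L(1+|X_{s-}|)$, and the terminal value is bounded by $L(1+|X_T|)$. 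Lemma~\ref{lem:2.5} then gives the bound on $Y$ and $Z$. The constant $C$ comes only from $L_1,L_2,L_3,L,T$ and from $\sigma^2+\int x^2\nu(dx)$, the latter entering through $C_1$ and $m_1$.

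\textbf{Main obstacle.} The hard part is justifying the conditional (pathwise in $\xi$) versions of Lemmas~\ref{lem:2.4}--\ref{lem:2.5} on $[t,T]$, because the lemmas as stated bound unconditional expectations. I would first try the localization trick: replace $\xi,\xi'$ by $\mathbf 1_A\xi,\mathbf 1_A\xi'$, use uniqueness to identify the solutions on $A$, apply the unconditional lemmas, and let $A$ range over $\mathcal{F}_t$ to deduce the conditional inequality. If instead \eqref{eq:3.7}--\eqref{eq:3.8} are meant with $\mathbb{E}|\xi-\xi'|^2$ on the right, the unconditional lemmas apply directly and this difficulty disappears.
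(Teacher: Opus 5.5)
Your proposal is correct and is essentially the paper's proof. Both apply Lemma~\ref{lem:2.4} to the forward difference, Lemma~\ref{lem:2.5} to the backward difference via the Lipschitz bounds on $\phi$ and $f$, and the same two lemmas with linear growth for \eqref{eq:3.8}; the paper uses the unconditional lemmas and simply writes $|\xi-\xi'|^2$ on the right, whereas you correctly flag the $\mathcal{F}_t$-conditioning.

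One caution on your localization for \eqref{eq:3.8}: replacing $\xi$ by $\mathbf 1_A\xi$ does not localize the constant term, because on $A^c$ the solution is $X^{t,0;u}$, not $0$. There you should instead multiply the whole system by $\mathbf 1_A$, i.e.\ use coefficients $\mathbf 1_A F$, $\mathbf 1_A f$ and terminal value $\mathbf 1_A\phi(X_T)$, which gives $\mathbb{E}[\mathbf 1_A(\cdots)]\le C\,\mathbb{E}[\mathbf 1_A(1+|\xi|^2)]$ for all $A\in\mathcal{F}_t$.
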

	
	\begin{proof}
		For simplicity of notation, we present the proof in the one-dimensional case; the multidimensional case follows similarly.
		
		Set
		\[
		\hat X_s := X_s^{t,\xi;u} - X_s^{t,\xi';u}, \quad
		\hat Y_s := Y_s^{t,\xi;u} - Y_s^{t,\xi';u}, \quad
		\hat Z_s := Z_s^{t,\xi;u} - Z_s^{t,\xi';u}.
		\]
		
		By the forward equation,
		\[
		\hat X_s = \xi - \xi' + \int_t^s \big[F(r,X_{r-}^{t,\xi;u},u_r) - F(r,X_{r-}^{t,\xi';u},u_r)\big] dL_r.
		\]
		
		Applying Lemma \ref{lem:2.4} to the above SDE yields
		\begin{equation}\label{eq:3.9}
			\mathbb{E}\Big[\sup_{s\in[t,T]} |\hat X_s|^2\Big] \le C |\xi - \xi'|^2.
		\end{equation}
		
		Next, for the backward equation, define the generator difference
		\[
		\hat f(s) := f(s,X_{s-}^{t,\xi;u},Y_s^{t,\xi;u},Z_s^{t,\xi;u},u_s)
		- f(s,X_{s-}^{t,\xi';u},Y_s^{t,\xi';u},Z_s^{t,\xi';u},u_s).
		\]
		
		By Assumption \ref{ass:3.1}, $\hat f$ is Lipschitz continuous with respect to $(\hat Y_s, \hat Z_s)$:
		\[
		|\hat f(s)| \le L_1 |\hat X_{s-}| + L_2 |\hat Y_s| + L_3 \|\hat Z_s\|_{\ell^2}.
		\]
		
		Then the difference $\hat Y_s$ satisfies
		\[
		\hat Y_s = \phi(X_T^{t,\xi;u}) - \phi(X_T^{t,\xi';u})
		+ \int_s^T \hat f(r) dr
		- \sum_{k=1}^\infty \int_s^T \hat Z_r^{(k)} dH_r^{(k)}, \quad s\in[t,T].
		\]
		
		Applying Lemma \ref{lem:2.5} to this BSDE, together with the estimate \eqref{eq:3.9} and the Lipschitz property of $\phi$, we obtain
		\begin{align*}
			\mathbb{E}\Big[\sup_{s\in[t,T]} |\hat Y_s|^2 + \int_t^T \|\hat Z_s\|_{\ell^2}^2 ds \Big]
			&\le C \mathbb{E}|\phi(X_T^{t,\xi;u}) - \phi(X_T^{t,\xi';u})|^2
			+ C \int_t^T \mathbb{E}|\hat X_{s-}|^2 ds \\
			&\le C \mathbb{E}\Big[\sup_{s\in[t,T]} |\hat X_s|^2 \Big] \le C |\xi-\xi'|^2.
		\end{align*}
		
		Combining the forward and backward estimates, we conclude the desired inequality\eqref{eq:3.7}.
		
		Next, we prove the estimate \eqref{eq:3.8}.
		
		For the forward equation, applying Lemma \ref{lem:2.4} to the SDE, together with the Lipschitz continuity and linear growth of $F$, we obtain
		\begin{equation}\label{eq:3.10}
			\mathbb{E}\Big[\sup_{s\in[t,T]} |X_s^{t,\xi;u}|^2 \Big]
			\le C \bigl(1 + |\xi|^2\bigr).
		\end{equation}
		
		Next, we consider the backward equation. By the Lipschitz property of $f$ and the linear growth condition, we have
		\[
		|f(s,X_{s-}^{t,\xi;u},Y_s^{t,\xi;u},Z_s^{t,\xi;u},u_s)|
		\le C \bigl(1 + |X_{s-}^{t,\xi;u}| + |Y_s^{t,\xi;u}| + \|Z_s^{t,\xi;u}\|_{\ell^2}\bigr).
		\]
		
		Applying Lemma \ref{lem:2.5} to the BSDE, we obtain
		\begin{align*}
			\mathbb{E}\Big[\sup_{s\in[t,T]} |Y_s^{t,\xi;u}|^2
			+ \int_t^T \|Z_s^{t,\xi;u}\|_{\ell^2}^2 ds \Big]
			&\le C \mathbb{E}|\phi(X_T^{t,\xi;u})|^2
			+ C \mathbb{E}\int_t^T \bigl(1 + |X_s^{t,\xi;u}|^2\bigr) ds.
		\end{align*}
		
		Using the Lipschitz continuity of $\phi$, we have
		\[
		|\phi(X_T^{t,\xi;u})|^2 \le C\bigl(1 + |X_T^{t,\xi;u}|^2\bigr).
		\]
		
		Combining this with \eqref{eq:3.10}, we deduce
		\begin{align*}
			\mathbb{E}\Big[\sup_{s\in[t,T]} |Y_s^{t,\xi;u}|^2
			+ \int_t^T \|Z_s^{t,\xi;u}\|_{\ell^2}^2 ds \Big]
			&\le C \bigl(1 + \mathbb{E}\sup_{s\in[t,T]} |X_s^{t,\xi;u}|^2 \bigr) \\
			&\le C \bigl(1 + |\xi|^2\bigr).
		\end{align*}
		
		Combining the estimates for $X$, $Y$, and $Z$, we obtain \eqref{eq:3.8}.
	\end{proof}
	
	\begin{lem}\label{lem:3.3}
		Under Assumption \ref{ass:3.1}, there exist two constants $C$ and $C'$, depending only on $L_1, L_2, L_3$, $T$, and
		\[
		\int_{\mathbb{R}} (1 \wedge |x|^2)\, \nu(dx),
		\]
		such that for any $t \in [0,T]$ and any $x, x' \in \mathbb{R}^n$, we have
		\[
		|W(t,x) - W(t,x')| \le C |x - x'|,
		\quad \text{and} \quad
		|W(t,x)| \le C' (1 + |x|).
		\]
	\end{lem}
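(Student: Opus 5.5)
We deduce both estimates from Lemma~\ref{lem:3.2} through the representation of the value function given in Proposition~\ref{prop:3.1}. For fixed $t$, $x$ and $x'$, and any $v\in\mathcal{U}^{t}[t,T]$, the quantities $Y_t^{t,x;v}$ and $Y_t^{t,x';v}$ are deterministic. Taking $\xi=x$ and $\xi'=x'$ in \eqref{eq:3.7} gives
\[
|Y_t^{t,x;v}-Y_t^{t,x';v}|^2\le \mathbb{E}\Big[\sup_{s\in[t,T]}|Y_s^{t,x;v}-Y_s^{t,x';v}|^2\Big]\le C|x-x'|^2 .
\]
The constant $C$ here depends only on $L_1,L_2,L_3$, $T$ and the L\'evy measure, and in particular it does not depend on $v$. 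Similarly, \eqref{eq:3.8} yields $|Y_t^{t,x;v}|\le C'(1+|x|)$ uniformly in $v$.

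Next we pass these bounds through the infimum, using the elementary fact that $|\inf_v a_v-\inf_v b_v|\le \sup_v|a_v-b_v|$ for real families. By Proposition~\ref{prop:3.1},
\[
|W(t,x)-W(t,x')|=\Big|\inf_{v\in\mathcal{U}^{t}[t,T]}Y_t^{t,x;v}-\inf_{v\in\mathcal{U}^{t}[t,T]}Y_t^{t,x';v}\Big|\le \sup_{v}|Y_t^{t,x;v}-Y_t^{t,x';v}|\le C|x-x'|.
\]
In the same way, $-C'(1+|x|)\le Y_t^{t,x;v}\le C'(1+|x|)$ for every $v$, and taking the infimum gives $|W(t,x)|\le C'(1+|x|)$. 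This also shows that $W$ is finite.

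The only delicate point is the uniformity in the control. The constants in Lemmas~\ref{lem:2.4}, \ref{lem:2.5} and \ref{lem:3.2} must not depend on $u$. This holds because the Lipschitz constants in Assumption~\ref{ass:3.1} are uniform in $u\in U$. The linear-growth constant of Assumption~\ref{ass:3.2} is likewise uniform in $u$, using the compactness of $U$, so the bound at $x=0$ is controlled independently of $u$.

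If one prefers not to use Proposition~\ref{prop:3.1}, the argument can be run with the essential infimum over $\mathcal{U}[t,T]$ instead. Conditional on $\mathcal{F}_t$, the estimates \eqref{eq:3.7} and \eqref{eq:3.8} give $\mathbb{P}$-a.s. bounds on $|Y_t^{t,x;u}-Y_t^{t,x';u}|$, and these bounds pass to the essential infimum. The route through Proposition~\ref{prop:3.1} is cleaner, however, since the values are deterministic there and no conditional version of the estimates is needed.
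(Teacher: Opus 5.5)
Your proposal is correct and takes essentially the same approach as the paper. Both arguments use Proposition~\ref{prop:3.1} to reduce to the deterministic values $Y_t^{t,x;v}$, $v\in\mathcal{U}^{t}[t,T]$, bound $|\inf_v a_v-\inf_v b_v|$ by $\sup_v|a_v-b_v|$, and apply \eqref{eq:3.7} and \eqref{eq:3.8} of Lemma~\ref{lem:3.2} with constants uniform in the control; your observation that the growth bound actually needs the control-uniform linear growth of Assumption~\ref{ass:3.2}, not just Assumption~\ref{ass:3.1}, correctly makes explicit a dependence the paper leaves implicit.
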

	
	\begin{proof}
		By Proposition~\ref{prop:3.1} and Lemma~\ref{lem:3.2}, we obtain
		\[
		\begin{aligned}
			|W(t,x) - W(t,x')|
			&= \Biggl|
			\inf_{v \in \mathcal{U}^{t}[t,T]} Y_t^{t,x;v}
			- \inf_{v \in \mathcal{U}^{t}[t,T]} Y_t^{t,x';v}
			\Biggr| \\[1mm]
			&\le \sup_{v \in \mathcal{U}^{t}[t,T]}
			\bigl| Y_t^{t,x;v} - Y_t^{t,x';v} \bigr| \\[1mm]
			&\le \sup_{v \in \mathcal{U}^{t}[t,T]}
			\Biggl(
			\mathbb{E} \Big[
			\sup_{t \le s \le T} | Y_s^{t,x;v} - Y_s^{t,x';v} |^2
			\Big]
			\Biggr)^{1/2} \\[1mm]
			&\le C |x - x'|.
		\end{aligned}
		\]
		For the linear growth estimate, by Proposition~\ref{prop:3.1} and the estimate \eqref{eq:3.8} in Lemma~\ref{lem:3.2}, we have
		\[
		|W(t,x)|
		\le \sup_{v \in \mathcal{U}^{t}[t,T]} |Y_t^{t,x;v}|
		\le C'(1+|x|).
		\]
	\end{proof}

	Before studying the dynamic programming principle (DPP), we introduce the notion of the backward semigroup. This concept was first proposed by Peng in \cite{peng1997backward}. For given
	\((t, x) \in [0, T) \times \mathbb{R}^n\), \(\delta \in (0, T - t]\) and \(u \in \mathcal{U}[t, t + \delta]\),
	for each \(\psi \in \mathcal{L}(\mathbb{R}^n)\), define
	\[
	G_{t,t+\delta}^{t,x;u}\left[ \psi\left(\widetilde{X}_{t+\delta}^{t,x;u}\right) \right]
	= \widetilde{Y}_t^{t,x;u},
	\]
	where
	\[
	\mathcal{L}(\mathbb{R}^n)
	= \Bigl\{ \psi: \mathbb{R}^n \to \mathbb{R} \;\big|\; \psi \text{ is Lipschitz continuous} \Bigr\}.
	\]
	
	Moreover, \((\widetilde{X}^{t,x;u}, \widetilde{Y}^{t,x;u}, \widetilde{Z}^{t,x;u})\) is the solution on the interval \([t, t + \delta]\) of the following forward-backward stochastic differential equation (FBSDE):
	\begin{equation}\label{eq:3.11}
		\begin{cases}
			d\widetilde{X}_s^{t,x;u} = F\bigl(s, \widetilde{X}_{s-}^{t,x;u}, u_s\bigr)\, dL_s, \\[0.3em]
			d\widetilde{Y}_s^{t,x;u} = -f\bigl(s, \widetilde{X}_{s-}^{t,x;u}, \widetilde{Y}_{s}^{t,x;u}, \widetilde{Z}_s^{t,x;u}, u_s\bigr)\, ds
			+ \displaystyle\sum_{k=1}^{\infty} \widetilde{Z}_s^{t,x;u,(k)} \, dH_s^{(k)},
			\quad s \in [t, t+\delta], \\[0.6em]
			\widetilde{X}_t^{t,x;u} = x,
			\quad \widetilde{Y}_{t+\delta}^{t,x;u} = \psi\bigl(\widetilde{X}_{t+\delta}^{t,x;u}\bigr).
		\end{cases}
	\end{equation}
	
	Since the coefficients of equation \eqref{eq:3.11} satisfy Assumption~\ref{ass:3.1}, the above equation admits a unique solution \\
	\((\widetilde{X}^{t,x;u}, \widetilde{Y}^{t,x;u}, \widetilde{Z}^{t,x;u})\). Hence, the mapping
	\(G_{t,t+\delta}^{t,x;u}[\cdot]\) is well defined.
	
	\begin{thm}\label{thm:3.5}
		Under Assumptions~\ref{ass:3.1}--\ref{ass:3.2}, for any $(t,x)\in[0,T)\times\mathbb{R}^n$ and any $\delta\in(0,T-t]$, the value function satisfies
		\begin{equation}\label{eq:3.17}
			W(t,x)
			=
			\inf_{v\in\mathcal{U}^t[t,t+\delta]}
			G_{t,t+\delta}^{t,x;v}
			\Big[
			W\bigl(t+\delta,\widetilde X_{t+\delta}^{t,x;v}\bigr)
			\Big].
		\end{equation}
	\end{thm}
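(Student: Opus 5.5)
I will follow Peng's backward semigroup argument. Denote the right-hand side of \eqref{eq:3.17} by $W_\delta(t,x)$. By Lemma~\ref{lem:3.3}, $W(t+\delta,\cdot)\in\mathcal{L}(\mathbb{R}^n)$ with linear growth, so $G^{t,x;v}_{t,t+\delta}[W(t+\delta,\widetilde X^{t,x;v}_{t+\delta})]$ is well defined. For $v\in\mathcal{U}^t[t,t+\delta]$ it is deterministic, by the same triviality of $\mathcal{F}^t_t$ used before Proposition~\ref{prop:3.1}. Two tools will be used throughout. The first is the flow (uniqueness) property: for $u\in\mathcal{U}[t,T]$,
\[
Y_t^{t,x;u}=G^{t,x;u}_{t,t+\delta}\big[Y^{t+\delta,X^{t,x;u}_{t+\delta};u}_{t+\delta}\big],
\]
which follows from uniqueness of \eqref{eq:3.1} restricted to $[t,t+\delta]$. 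The second is the comparison Lemma~\ref{lem:2.3} together with the $L^2$-stability Lemma~\ref{lem:2.5}, which shows that $G$ is monotone and $L^2$-Lipschitz in its terminal datum. Assumption~\ref{ass:3.1} supplies exactly the decomposition of $f$ and the jump condition needed for Lemma~\ref{lem:2.3}.

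I first prove $W(t,x)\ge W_\delta(t,x)$. Fix $v\in\mathcal{U}^t[t,T]$ and set $\eta=X^{t,x;v}_{t+\delta}$. The key claim is the random-initial-state identification $W(t+\delta,\eta)=\operatorname*{ess\,inf}_{u}Y^{t+\delta,\eta;u}_{t+\delta}$, or at least the inequality $Y^{t+\delta,\eta;v}_{t+\delta}\ge W(t+\delta,\eta)$ a.s. To obtain it, approximate $\eta$ by simple $\mathcal{F}_{t+\delta}$-measurable $\eta_m=\sum_i x_i\mathbf 1_{B_i}$. Then $Y^{t+\delta,\eta_m;v}_{t+\delta}=\sum_i Y^{t+\delta,x_i;v}_{t+\delta}\mathbf 1_{B_i}\ge\sum_i W(t+\delta,x_i)\mathbf 1_{B_i}=W(t+\delta,\eta_m)$. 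Pass to the limit using \eqref{eq:3.7} and the Lipschitz continuity of $W$. Comparison then gives
\[
Y^{t,x;v}_t=G[Y^{t+\delta,\eta;v}_{t+\delta}]\ge G[W(t+\delta,\eta)]\ge W_\delta(t,x).
\]
Taking the infimum over $v$ and using Proposition~\ref{prop:3.1} yields the inequality.

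For $W(t,x)\le W_\delta(t,x)$, fix $\varepsilon>0$ and $v\in\mathcal{U}^t[t,t+\delta]$, and again set $\eta=\widetilde X^{t,x;v}_{t+\delta}$. Partition $\mathbb{R}^n$ into Borel sets $D_i$ of diameter less than $\varepsilon$, with points $y_i\in D_i$. By Proposition~\ref{prop:3.1} applied at $(t+\delta,y_i)$, choose $v^i\in\mathcal{U}^{t+\delta}[t+\delta,T]$ with $Y^{t+\delta,y_i;v^i}_{t+\delta}\le W(t+\delta,y_i)+\varepsilon$. Define the pasted control $u=v$ on $[t,t+\delta]$ and $u=\sum_i v^i\mathbf 1_{\{\eta\in D_i\}}$ on $(t+\delta,T]$; it is admissible in $\mathcal{U}[t,T]$. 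Uniqueness gives $Y^{t+\delta,\eta;u}_{t+\delta}=\sum_i Y^{t+\delta,\eta;v^i}_{t+\delta}\mathbf 1_{\{\eta\in D_i\}}$. Using \eqref{eq:3.7}, conditioned on $\mathcal{F}_{t+\delta}$ since $\eta$ is random, together with the Lipschitz property of $W$, we get $Y^{t+\delta,\eta;u}_{t+\delta}\le W(t+\delta,\eta)+C\varepsilon$ in the form of a bound on $E|(\cdot)^+|^2$. By comparison and the stability of $G$ (Lemma~\ref{lem:2.5}),
\[
W(t,x)\le Y^{t,x;u}_t=G[Y^{t+\delta,\eta;u}_{t+\delta}]\le G[W(t+\delta,\eta)]+C\varepsilon.
\]
Letting $\varepsilon\to0$ and then taking the infimum over $v$ gives the reverse inequality.

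The main obstacle is the treatment of random initial data $\eta$. This includes the conditional form of \eqref{eq:3.7}, namely $E[\,\cdot\mid\mathcal{F}_{t+\delta}]\le C|\eta-\eta'|^2$, and the countable-partition pasting, where the sums are infinite so convergence must be justified using the growth bound \eqref{eq:3.8}. I would handle both by first proving the identities for $\mathcal{F}_{t+\delta}$-measurable simple random variables via $\mathbf 1_{B_i}$-localization and uniqueness, then extending by $L^2$ approximation. A secondary check is that the infimum in \eqref{eq:3.17} over $\mathcal{U}^t$ may be replaced by one over $\mathcal{U}[t,t+\delta]$. This follows from the same approximation argument as in Proposition~\ref{prop:3.1}, applied to $G$.
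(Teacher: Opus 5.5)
Your proposal is correct and follows essentially the same backward-semigroup route as the paper. The inequality ``$\ge$'' comes from the flow property together with the comparison Lemma~\ref{lem:2.3}. The inequality ``$\le$'' comes from pasting near-optimal controls from Proposition~\ref{prop:3.1} over a discretization of $\widetilde X_{t+\delta}^{t,x;v}$, followed by comparison and $L^2$-stability.

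If anything, your version is more careful than the paper's. You justify $Y^{t,x;v}_{t+\delta}\ge W(t+\delta,X^{t,x;v}_{t+\delta})$ for the random state by simple approximation, whereas the paper only cites ``the definition of the value function''. Also, because you fix $v\in\mathcal{U}^t[t,t+\delta]$ from the start, the ``secondary check'' that reduces $\mathcal{U}[t,t+\delta]$ to $\mathcal{U}^t[t,t+\delta]$ is not actually needed in your argument.
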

	
	\begin{proof}
		We divide the proof into two steps.
		
		\medskip
		\noindent\emph{Step 1: Proof of ``$\ge$''.}
		
		For any $v(\cdot)\in\mathcal{U}^t[t,T]$, by the flow property of FBSDEs and the definition of the backward semigroup, we have
		\[
		Y_t^{t,x;v}
		=
		G_{t,t+\delta}^{t,x;v}
		\Big[
		Y_{t+\delta}^{t,x;v}
		\Big].
		\]
		By the definition of the value function,
		\[
		W\bigl(t+\delta,\widetilde X_{t+\delta}^{t,x;v}\bigr)
		\le
		Y_{t+\delta}^{t,x;v},
		\quad \mathbb P\text{-a.s.}
		\]
		Applying the comparison theorem (Lemma~\ref{lem:2.3}), we obtain
		\[
		G_{t,t+\delta}^{t,x;v}
		\Big[
		W\bigl(t+\delta,\widetilde X_{t+\delta}^{t,x;v}\bigr)
		\Big]
		\le
		Y_t^{t,x;v}.
		\]
		Taking the infimum over $v\in\mathcal{U}^t[t,T]$, we get
		\[
		\inf_{v\in\mathcal{U}^t[t,T]}
		G_{t,t+\delta}^{t,x;v}
		\Big[
		W\bigl(t+\delta,\widetilde X_{t+\delta}^{t,x;v}\bigr)
		\Big]
		\le
		W(t,x).
		\]
		
		Since the semigroup depends only on the restriction of $v$ on $[t,t+\delta]$, and any control in $\mathcal{U}^t[t,t+\delta]$ can be extended arbitrarily to an element of $\mathcal{U}^t[t,T]$, the above infimum is equivalent to the infimum over $\mathcal{U}^t[t,t+\delta]$. Hence,
		\[
		\inf_{v\in\mathcal{U}^t[t,t+\delta]}
		G_{t,t+\delta}^{t,x;v}
		\Big[
		W\bigl(t+\delta,\widetilde X_{t+\delta}^{t,x;v}\bigr)
		\Big]
		\le
		W(t,x).
		\]
		
		\medskip
		\noindent\emph{Step 2: Proof of ``$\le$''.}
		
		Fix any $u(\cdot)\in\mathcal{U}[t,t+\delta]$. Let $(\widetilde X,\widetilde Y,\widetilde Z)$ be the solution on $[t,t+\delta]$ of
		\begin{equation}\label{eq:3.18}
			\begin{cases}
				d\widetilde X_s = F(s,\widetilde X_{s-},u_s)\,dL_s,\\
				d\widetilde Y_s = -f(s,\widetilde X_{s-},\widetilde Y_s,\widetilde Z_s,u_s)\,ds
				+ \sum_{k=1}^\infty \widetilde Z_s^{(k)}\,dH_s^{(k)},\\
				\widetilde X_t = x,\quad
				\widetilde Y_{t+\delta}
				=
				W\bigl(t+\delta,\widetilde X_{t+\delta}\bigr).
			\end{cases}
		\end{equation}
		Then
		\[
		\widetilde Y_t
		=
		G_{t,t+\delta}^{t,x;u}
		\Big[
		W\bigl(t+\delta,\widetilde X_{t+\delta}^{t,x;u}\bigr)
		\Big].
		\]
		
		For each $m\ge1$, let $\xi^m$ be a simple approximation of $\widetilde X_{t+\delta}$ such that
		\[
		\mathbb E\bigl[|\xi^m-\widetilde X_{t+\delta}|^2\bigr]\to0.
		\]
		For each $x_i^m$, choose $v^{i,m}(\cdot)\in\mathcal{U}^t[t+\delta,T]$ satisfying
		\begin{equation}\label{eq:3.19}
			W(t+\delta,x_i^m)
			\le
			Y_{t+\delta}^{t+\delta,x_i^m;v^{i,m}}
			\le
			W(t+\delta,x_i^m)+\frac1m.
		\end{equation}
		Define
		\[
		v^m_s = \sum_i v_s^{i,m}\mathbf1_{A_i^m},
		\]
		and construct the concatenated control
		\[
		u^m_s =
		\begin{cases}
			u_s, & s\in[t,t+\delta],\\
			v^m_s, & s\in(t+\delta,T].
		\end{cases}
		\]
		
		Let $(X^m,Y^m,Z^m)$ be the solution under $u^m$. Then
		\[
		Y_t^{t,x;u^m} = Y_t^m.
		\]
		
		Moreover,
		\[
		W(t+\delta,\xi^m)
		\le
		Y_{t+\delta}^{t+\delta,\xi^m;v^m}
		\le
		W(t+\delta,\xi^m)+\frac1m.
		\]
		
		Since $W(t+\delta,\cdot)$ is Lipschitz continuous (Lemma~\ref{lem:3.3}), it follows that
		\[
		\mathbb E\Big|
		Y_{t+\delta}^{t+\delta,\xi^m;v^m}
		-
		W(t+\delta,\widetilde X_{t+\delta})
		\Big|^2
		\to0.
		\]
		
		Let $(\bar{X}^m, \bar{Y}^m, \bar{Z}^m)$ be the solution to the decoupled FBSDE \eqref{eq:3.18} on $[t, t+\delta]$ with terminal condition $Y_{t+\delta}^{t+\delta, \xi^m; v^m}$. By the standard stability estimate for decoupled FBSDEs with respect to terminal conditions, we have
\[
\mathbb{E}\left[|\bar{Y}_t^m - \widetilde{Y}_t|^2\right] \to 0, \quad \text{as } m\to\infty,
\]
where $\widetilde{Y}_t$ denotes the limit of $\bar{Y}_t^m$ in $L^2(\Omega, \mathcal{F}_t, \mathbb{P})$, corresponding to the control $u \in \mathcal{U}[t, t+\delta]$.

Next, by the stability estimate for decoupled FBSDEs with respect to controls, we obtain
\[
\mathbb{E}\left[|Y_t^{t,x;u^m} - \bar{Y}_t^m|^2\right] \to 0, \quad \text{as } m\to\infty.
\]
Combining the above two convergence results, we conclude that
\[
Y_t^{t,x;u^m} \to \widetilde{Y}_t \quad \text{in } L^2(\Omega, \mathcal{F}_t, \mathbb{P}), \quad \text{as } m\to\infty.
\]

Since $Y_t^{t,x;u^m} \ge W(t,x)$ for all $m \ge 1$, by the lower semicontinuity of the $L^2$ limit (or Fatou's lemma), we have
\[
\widetilde{Y}_t \ge W(t,x).
\]
Recall that $\widetilde{Y}_t = G_{t,t+\delta}^{t,x;u}\left[W(t+\delta, \widetilde{X}_{t+\delta}^{t,x;u})\right]$, where $G_{t,t+\delta}^{t,x;u}[\cdot]$ denotes the solution operator of the FBSDE on $[t, t+\delta]$ with control $u$ and terminal condition $W(t+\delta, \cdot)$. Thus,
\[
G_{t,t+\delta}^{t,x;u}\left[W(t+\delta, \widetilde{X}_{t+\delta}^{t,x;u})\right] \ge W(t,x).
\]

Taking the infimum over all $u \in \mathcal{U}[t, t+\delta]$ on both sides, we obtain
\[
W(t,x) \le \inf_{u\in \mathcal{U}[t, t+\delta]} G_{t,t+\delta}^{t,x;u}\left[W(t+\delta, \widetilde{X}_{t+\delta}^{t,x;u})\right].
\]

Finally, using the same approximation argument as in Proposition 3.1, the infimum over $\mathcal{U}[t, t+\delta]$ can be restricted to $\mathcal{U}^t[t, t+\delta]$. This completes the proof.
\qed
	\end{proof}
	
	\begin{lem}\label{lem:3.5}
		Under Assumption \ref{ass:3.1}, the value function $W(t,x)$ is $1/2$-H\"{o}lder continuous with respect to the time variable $t$.
		That is, there exists a constant $C>0$ such that for any $(t,x) \in [0,T] \times \mathbb{R}^n$ and any $t_0 \in [0,T]$, we have
		\begin{equation}\label{eq:3.19}
			|W(t,x) - W(t_0,x)| \;\leq\; C(1 + |x|) \cdot |t - t_0|^{\frac{1}{2}}.
		\end{equation}
	\end{lem}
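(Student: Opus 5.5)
Without loss of generality take $t<t_0$ and set $\delta=t_0-t$. The plan is to use the dynamic programming principle (Theorem~\ref{thm:3.5}) on $[t,t+\delta]$:
\[
W(t,x)=\inf_{v\in\mathcal U^t[t,t+\delta]} G^{t,x;v}_{t,t+\delta}\big[W(t+\delta,\widetilde X^{t,x;v}_{t+\delta})\big].
\]
This reduces the claim to showing that, uniformly in $v$,
\[
\big|G^{t,x;v}_{t,t+\delta}[W(t+\delta,\widetilde X^{t,x;v}_{t+\delta})]-W(t+\delta,x)\big|\le C(1+|x|)\delta^{1/2}.
\]
Taking the infimum over $v$ then gives $|W(t,x)-W(t_0,x)|\le C(1+|x|)\delta^{1/2}$.

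\textbf{Step 1: terminal-value substitution.} For fixed $v$, I compare $G^{t,x;v}_{t,t+\delta}[W(t+\delta,\widetilde X_{t+\delta})]$ with $G^{t,x;v}_{t,t+\delta}[W(t+\delta,x)]$. Write $I_1$ for their difference.

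\emph{Bounding $I_1$.} Apply Lemma~\ref{lem:2.5} to the difference of the two BSDEs on $[t,t+\delta]$, which share the same generator and forward process. Together with the Lipschitz bound of Lemma~\ref{lem:3.3} this gives
\[
|I_1|^2\le C\,\mathbb E|W(t+\delta,\widetilde X_{t+\delta})-W(t+\delta,x)|^2\le C\,\mathbb E|\widetilde X_{t+\delta}-x|^2 .
\]
\emph{Forward increment.} Using $dL_s=a_{11}^{-1}dH^{(1)}_s+m_1ds$, the Protter--Talay estimate quoted after Lemma~\ref{lem:2.4}, the linear growth of $F$, and the moment bound \eqref{eq:3.8}, I expect
\[
\mathbb E|\widetilde X_{t+\delta}-x|^2\le C\,\mathbb E\int_t^{t+\delta}(1+|\widetilde X_{s-}|^2)\,ds\le C(1+|x|^2)\delta .
\]
Hence $|I_1|\le C(1+|x|)\delta^{1/2}$.

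\textbf{Step 2: deterministic terminal value.} Set $c:=W(t+\delta,x)$, which is a constant with $|c|\le C(1+|x|)$, and let $I_2$ denote $G^{t,x;v}_{t,t+\delta}[c]-c$. Let $(\bar Y,\bar Z)$ solve the BSDE on $[t,t+\delta]$ with terminal value $c$. Then
\[
\bar Y_t-c=\mathbb E\int_t^{t+\delta} f(s,\widetilde X_{s-},\bar Y_s,\bar Z_s,v_s)\,ds .
\]
I will bound this by the linear growth (Assumption~\ref{ass:3.2}) and Cauchy--Schwarz:
\[
|\bar Y_t-c|\le C\delta^{1/2}\Big(\mathbb E\int_t^{t+\delta}\big(1+|\widetilde X_s|^2+|\bar Y_s|^2+\|\bar Z_s\|_{\ell^2}^2\big)\,ds\Big)^{1/2}.
\]
The a priori estimate of Lemma~\ref{lem:2.5} (with the constant terminal value $c$) together with \eqref{eq:3.8} bounds the bracket by $C(1+|x|^2)$. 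Since $Z$ only enters inside the integral, the factor $\delta^{1/2}$ survives and $|I_2|\le C(1+|x|)\delta^{1/2}$.

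\textbf{Conclusion and main obstacle.} Adding the two steps and taking the infimum over $v$ yields \eqref{eq:3.19}. The main obstacle is keeping all constants uniform in $v$, in $t$, and in $\delta\le T$. In particular, Step 2 needs an estimate that captures the smallness in $\delta$ rather than just an $O(1)$ bound. I would handle this by keeping the $\int\|\bar Z\|^2$ term under the square root next to the explicit $\delta^{1/2}$ factor, instead of trying to obtain smallness from the $Z$-integral itself. I would also check that the DPP is applied only to controls in $\mathcal U^t$, so that every $G$-value is deterministic and the comparisons are pointwise.
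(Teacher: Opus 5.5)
Your proposal is correct and follows essentially the same route as the paper: you apply the DPP, then split into a terminal-substitution term (controlled by the spatial Lipschitz bound on $W$ and the $O(\delta)$ second-moment estimate for $\widetilde X_{t+\delta}-x$) and a constant-terminal-value term (controlled by Cauchy--Schwarz on the generator integral). The differences are cosmetic and do not affect the conclusion: you bound uniformly in $v$ instead of passing through an $\varepsilon$-optimal control, and you settle for $|I_2|=O(\delta^{1/2})$ where the paper asserts $O(\delta)$, which is all that is needed.
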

	
	\begin{proof}
		It suffices to consider the case of a small time increment.
		Let $\delta>0$ with $t+\delta \leq T$.
		We first prove that
		\begin{equation*}
			|W(t,x) - W(t+\delta,x)| \;\leq\; C(1 + |x|) \delta^{\frac{1}{2}}.
		\end{equation*}
		Then, by taking $\delta = |t - t_0|$, the result can be extended to the general case for any $t,t_0 \in [0,T]$.
		
		By the dynamic programming principle (DPP), the value function satisfies
		\begin{equation*}
			W(t,x) = \essinf_{u \in \mathcal{U}[t,t+\delta]}
			G_{t,t+\delta}^{t,x;u} \Big[ W(t+\delta, \widetilde{X}_{t+\delta}^{t,x;u}) \Big].
		\end{equation*}
		where $G_{t,t+\delta}^{t,x;u}[\cdot]$ denotes the backward semigroup on the interval $[t,t+\delta]$, and $\widetilde{X}_s^{t,x;u}$ is the state process under the control $u$.
		
		For any $\varepsilon > 0$, there exists a control
		$u^* \in \mathcal{U}[t,t+\delta]$ such that
		\begin{equation}\label{eq:3.20}
			W(t,x) \;\leq\;
			G_{t,t+\delta}^{t,x;u^*}
			\Big[ W(t+\delta, \widetilde{X}_{t+\delta}^{t,x;u^*}) \Big]
			\;\leq\; W(t,x) + \varepsilon.
		\end{equation}
		Hence,
		\begin{equation}\label{eq:3.21}
			\begin{aligned}
				W(t,x) - W(t+\delta,x)
				&\leq
				G_{t,t+\delta}^{\,t,x;u^*}
				\Big[ W(t+\delta, \widetilde{X}_{t+\delta}^{t,x;u^*}) \Big]
				- W(t+\delta,x) + \varepsilon .
			\end{aligned}
		\end{equation}
		
		We define two perturbation terms:
		\begin{align*}
			I_\delta^1(u^*) &:=
			G_{t,t+\delta}^{t,x;u^*}
			\Big[ W(t+\delta, \widetilde{X}_{t+\delta}^{t,x;u^*}) \Big]
			- G_{t,t+\delta}^{t,x;u^*}[ W(t+\delta,x) ], \\
			I_\delta^2(u^*) &:=
			G_{t,t+\delta}^{t,x;u^*}[ W(t+\delta,x) ]
			- W(t+\delta,x).
		\end{align*}
		It then follows from \eqref{eq:3.21} that
		\begin{equation}\label{eq:3.22}
			W(t,x) - W(t+\delta,x) \;\leq\; I_\delta^1(u^*) + I_\delta^2(u^*) + \varepsilon.
		\end{equation}
		Since $\varepsilon>0$ is arbitrary, letting $\varepsilon \to 0$ yields
		\begin{equation} \label{eq:3.23}
			W(t,x) - W(t+\delta,x) \;\leq\; I_\delta^1(u^*) + I_\delta^2(u^*).
		\end{equation}
		
		First, we estimate the terminal perturbation term $I_\delta^1(u^*)$.
		By the Lipschitz continuity of the value function with respect to the spatial variable, there exists a constant $L_x>0$ such that
		\[
		|W(t+\delta, \widetilde{X}_{t+\delta}^{t,x;u^*}) - W(t+\delta,x)|
		\leq L_x \, |\widetilde{X}_{t+\delta}^{t,x;u^*} - x|.
		\]
		By the monotonicity and Lipschitz continuity of the semigroup with respect to the terminal condition, we obtain
		\[
		|I_\delta^1(u^*)| \leq C \cdot
		\mathbb{E}\!\left[ |\widetilde{X}_{t+\delta}^{t,x;u^*} - x| \,\big|\, \mathcal{F}_t \right].
		\]
		Applying the Cauchy--Schwarz inequality to the right-hand side yields
		\[
		|I_\delta^1(u^*)| \leq C \cdot
		\Big( \mathbb{E}\big[ |\widetilde{X}_{t+\delta}^{t,x;u^*} - x|^2 \,\big|\, \mathcal{F}_t \big] \Big)^{\frac{1}{2}}.
		\]
		
		By Lemma~\ref{lem:3.2}, there exists a constant $C>0$ such that
		\[
		\mathbb{E}\!\left[ |\widetilde{X}_{t+\delta}^{t,x;u^*} - x|^2 \,\big|\, \mathcal{F}_t \right]
		\leq C(1 + |x|^2)\delta.
		\]
		Hence,
		\begin{equation}\label{eq:3.24}
			|I_\delta^1(u^*)| \;\leq\; C(1 + |x|) \delta^{\frac{1}{2}}.
		\end{equation}
		
		Next, we estimate the generator integral term $I_\delta^2(u^*)$.
		Let $(Y_s', Z_s')$ be the solution of the backward stochastic differential equation (BSDE) with terminal condition $W(t+\delta,x)$.
		Then, by the representation of the backward semigroup, we have
		\[
		G_{t,t+\delta}^{t,x;u^*} \big[ W(t+\delta,x) \big]
		= Y_t'
		= W(t+\delta,x)
		+ \mathbb{E} \!\left[ \int_t^{t+\delta}
		f\big(s, \widetilde{X}_{s}^{t,x;u^*}, Y_s', Z_s', u_s^*\big)\, ds \,\Big|\, \mathcal{F}_t \right].
		\]
		Therefore, the generator integral term can be written as
		\[
		I_\delta^2(u^*)
		= \mathbb{E} \!\left[ \int_t^{t+\delta}
		f\big(s, \widetilde{X}_{s}^{t,x;u^*}, Y_s', Z_s', u_s^*\big)\, ds \,\Big|\, \mathcal{F}_t \right].
		\]
		Applying the Cauchy--Schwarz inequality yields
		\[
		|I_\delta^2(u^*)|
		\leq \delta^{\frac{1}{2}} \cdot
		\Bigg( \mathbb{E}\Big[ \int_t^{t+\delta}
		\big| f(s, \widetilde{X}_{s}^{t,x;u^*}, Y_s', Z_s', u_s^*) \big|^2 \, ds \,\Big|\, \mathcal{F}_t \Big] \Bigg)^{\frac{1}{2}}.
		\]
		
		Using the Lipschitz condition of $f$, we have
		\[
		|f(s,x,y,z,u)|^2 \leq C \Big( |f(s,x,0,0,u)|^2 + |y|^2 + \|z\|_{\ell^2}^2 \Big).
		\]
		By the linear growth condition in Assumption~\ref{ass:3.2}, there exists a constant $L>0$ such that
		\[
		|f(s,x,0,0,u)|^2 \leq 2L^2(1 + |x|^2).
		\]
		Combining with Lemma~\ref{lem:3.2}, there exists a constant $C_1>0$ such that
		\[
		\mathbb{E}\!\left[ \big| f(s, \widetilde{X}_{s}^{t,x;u^*}, 0, 0, u_s^*) \big|^2
		\,\Big|\, \mathcal{F}_t \right] \leq C_1(1 + |x|^2).
		\]
		Moreover, the moment estimates for the BSDE solution yield
		\[
		\mathbb{E}\!\left[ \int_t^{t+\delta} |Y_s'|^2 ds \,\Big|\, \mathcal{F}_t \right]
		+ \mathbb{E}\!\left[ \int_t^{t+\delta} \|Z_s'\|_{\ell^2}^2 ds \,\Big|\, \mathcal{F}_t \right]
		\leq C(1 + |x|^2)\delta.
		\]
		
		Combining the above estimates, we obtain
		\[
		\mathbb{E}\!\left[ \int_t^{t+\delta}
		\big| f(s, \widetilde{X}_{s}^{t,x;u^*}, Y_s', Z_s', u_s^*) \big|^2 ds
		\,\Big|\, \mathcal{F}_t \right]
		\leq C(1 + |x|^2)\delta.
		\]
		Hence,
		\begin{equation}\label{eq:3.25}
			|I_\delta^2(u^*)| \;\leq\; C(1 + |x|)\delta.
		\end{equation}
		
		By \eqref{eq:3.23}, \eqref{eq:3.24}, and \eqref{eq:3.25}, and noting that $\delta \leq \delta^{1/2}$ for $\delta \leq 1$, we obtain
		\[
		|W(t,x) - W(t+\delta,x)| \;\leq\; |I_\delta^1(u^*)| + |I_\delta^2(u^*)|
		\;\leq\; C(1 + |x|)\delta^{\frac{1}{2}}.
		\]
		Extending this result to any $t,t_0 \in [0,T]$ by taking $\delta = |t - t_0|$, the proof is complete.
	\end{proof}
	
	\section{The Existence of Viscosity Solutions}
	
	Let $W(t,x)$ denote the value function defined in \eqref{eq:3.2}. The associated HJB equation is
\begin{equation}\label{eq:4.1}
	\begin{cases}
		\partial_t W(t,x) + \displaystyle \inf_{u\in U} H(t,x,W,D_x W,D^2_x W,u) = 0, & (t,x)\in[0,T)\times \mathbb{R}^n,\\
		W(T,x) = \phi(x), & x\in \mathbb{R}^n,
	\end{cases}
\end{equation}
where $H$ is the Hamiltonian of the controlled L\'{e}vy jump system.

For any smooth function $v:[0,T]\times \mathbb{R}^n \to \mathbb{R}$, define
\begin{align}\label{eq:4.2}
	H(t,x,v,p,A,u) &= m_1 F(t,x,u)^\top p
	+ \frac{1}{2} \sigma^2 F(t,x,u)^\top A F(t,x,u) \nonumber\\
	&\quad + \int_{\mathbb{R}} \Bigl[v(t,x+F(t,x,u)\zeta) - v(t,x) - p^\top F(t,x,u)\zeta \Bigr] \nu(d\zeta) \nonumber\\
	&\quad + f\bigl(t,x,v(t,x),(\mathcal{L}^{u,(k)}v(t,x))_{k\ge1},u\bigr),
\end{align}
with the operator
\begin{equation}\label{eq:4.3}
	\mathcal{L}^{u,(k)}v(t,x)
	= \delta_{k1}\frac{1}{a_{11}} F(t,x,u)^\top D_x v(t,x)
	+ \int_{\mathbb{R}} \Bigl[v(t,x+F(t,x,u)\zeta) - v(t,x) - D_x v(t,x)^\top F(t,x,u)\zeta\Bigr] p_k(\zeta) \nu(d\zeta),
	\quad k\ge1.
\end{equation}

To simplify the presentation of viscosity solutions, we further define the \emph{Levy-type generator} for smooth functions $\varphi \in C^{1,2}([0,T]\times \mathbb{R}^n)$ as:
\begin{equation*}\label{eq:4.4}
	\begin{aligned}
		\mathcal{L}^u \varphi(t,x)
		&=
		m_1 F(t,x,u)^\top D_x\varphi(t,x)
		+ \tfrac{1}{2}\sigma^2 F(t,x,u)^\top D_x^2\varphi(t,x) F(t,x,u) \\
		&\quad
		+\int_{\mathbb R}
		\Bigl[
		\varphi\!\left(t,x+F(t,x,u)\zeta\right)
		-\varphi(t,x)
		- D_x\varphi(t,x)^\top F(t,x,u)\zeta
		\Bigr]\,\nu(d\zeta).
	\end{aligned}
\end{equation*}
$\text{Note that by substituting } v=\varphi, \, p=D_x\varphi, \, A=D_x^2\varphi \text{ into \eqref{eq:4.2}, the Hamiltonian can be rewritten as } H(t,x,\varphi,D_x\varphi,D_x^2\varphi,u) = \mathcal{L}^u \varphi(t,x) + f\bigl(t,x,\varphi(t,x),(\mathcal{L}^{u,(k)}\varphi(t,x))_{k\ge1},u\bigr).$

Next, we give the definition of viscosity solutions (see \cite{crandall1992users}).

\begin{defn}\label{defn:4.1}
	(i) A real-valued continuous function $W(\cdot,\cdot)\in C([0,T]\times \mathbb{R}^n)$ with at most polynomial growth is called a \emph{viscosity subsolution} (resp. \emph{viscosity supersolution}) of equation \eqref{eq:4.1} if
	\[
	W(T,x) \le \phi(x) \quad (\text{resp. } W(T,x) \ge \phi(x)), \quad \forall x\in \mathbb{R}^n,
	\]
	and for any function $\varphi \in C^{1,2}([0,T]\times \mathbb{R}^n)$ with at most quadratic growth, if at some point $(t,x)\in [0,T)\times \mathbb{R}^n$,
	\[
	W(t,x) = \varphi(t,x)
	\quad \text{and} \quad
	W-\varphi \text{ attains a local maximum (minimum) at } (t,x),
	\]
	then
	\[
	\partial_t \varphi(t,x) +\inf_{u\in U}
	\Bigl\{
	\mathcal L^u \varphi(t,x)
	+ f\Bigl(t,x,\varphi(t,x),
	(\mathcal{L}^{u,(k)}\varphi(t,x))_{k\ge1},u\Bigr)
	\Bigr\}\ge 0
	\quad (\text{resp. } \le 0).
	\]
	
	(ii) A real-valued continuous function $W(\cdot,\cdot)\in C([0,T]\times \mathbb{R}^n)$ is called a \emph{viscosity solution} of equation \eqref{eq:4.1} if it is both a viscosity subsolution and a viscosity supersolution.
\end{defn}
	
	For convenience in the following derivation, we denote the local increment
	\[
	\varphi^1(s,x,\zeta)
	:= \varphi(s, x + F(s,x,u)\,\zeta)
	- \varphi(s,x)
	- F(s,x,u)^\top D_x \varphi(s,x)\, \zeta .
	\]

	\begin{thm}\label{thm:4.1}
		Provided Assumption \ref{ass:3.1} holds, the value function $W(t,x)$ is a viscosity solution of the HJB equation \eqref{eq:4.1}.
	\end{thm}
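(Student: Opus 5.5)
The plan follows Peng's backward semigroup method, adapted to the Teugels setting. Continuity of $W$ and its growth come from Lemma~\ref{lem:3.3} and Lemma~\ref{lem:3.5}, and $W(T,x)=\phi(x)$ holds by definition. It then remains to check the sub- and supersolution inequalities at a point $(t,x)\in[0,T)\times\mathbb R^n$ with a test function $\varphi\in C^{1,2}$. Without loss of generality we take $W=\varphi$ at $(t,x)$ and $W\le\varphi$ (resp.\ $\ge$) globally; the local-to-global reduction uses the linear growth of $W$ and quadratic growth of $\varphi$. We fix $\delta>0$ small.

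\emph{Subsolution.} By Theorem~\ref{thm:3.5} and the comparison Lemma~\ref{lem:2.3} (applicable because of the structural decomposition of $f$ in Assumption~\ref{ass:3.1}), we get for every $u\in\mathcal U^t[t,t+\delta]$
\[
\varphi(t,x)=W(t,x)\le G^{t,x;u}_{t,t+\delta}[W(t+\delta,\widetilde X_{t+\delta})]\le G^{t,x;u}_{t,t+\delta}[\varphi(t+\delta,\widetilde X_{t+\delta})].
\]
We take $u\equiv v$ constant. Let $(Y^{1},Z^{1})$ solve the BSDE with terminal value $\varphi(t+\delta,\widetilde X_{t+\delta})$, and set $Y^2_s:=Y^1_s-\varphi(s,\widetilde X_s)$. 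Applying It\^o's formula for L\'evy-driven semimartingales to $\varphi(s,\widetilde X_s)$, we decompose the jump part $\sum_{0<r\le s}\varphi^1(r,\widetilde X_{r-},\Delta L_r)$ into its compensator $\int\varphi^1\nu(d\zeta)dr$ plus a martingale. That martingale is re-expressed through the Teugels martingales via $\Delta H^{(k)}_r = p_k(\Delta L_r)$, i.e.\ the expansion $\zeta^{j}$ in the orthonormal basis $\{p_k\}$ of $L^2(\nu)$ together with the Brownian part carried by $H^{(1)}$. This yields
\[
d\varphi(s,\widetilde X_s)=\big(\partial_s\varphi+\mathcal L^v\varphi\big)(s,\widetilde X_{s-})\,ds+\sum_k\mathcal L^{v,(k)}\varphi(s,\widetilde X_{s-})\,dH^{(k)}_s,
\]
which is exactly how \eqref{eq:4.3} arises. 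Consequently $(Y^2,Z^2)$, with $Z^{2,(k)}=Z^{1,(k)}-\mathcal L^{v,(k)}\varphi$, solves a BSDE with zero terminal value and generator
\[
g(s,y,z)=\partial_s\varphi+\mathcal L^v\varphi+f\big(s,\widetilde X_{s-},y+\varphi,z+(\mathcal L^{v,(k)}\varphi)_k,v\big),
\]
evaluated along $\widetilde X_{s-}$. We then compare with the deterministic ODE/BSDE $Y^3$ whose generator is $g_0(s,y,0)$ with $\widetilde X_s$ frozen at $x$. Standard estimates (Lemma~\ref{lem:2.5} and Lemma~\ref{lem:3.2}: $\mathbb E\sup_{s}|\widetilde X_s-x|^2\le C\delta$, plus $\mathbb E\int_t^{t+\delta}\|Z^2\|^2\le C\delta^{3/2}$) give $|Y^2_t-Y^3_t|\le C\delta^{1+\epsilon}$, hence $Y^2_t=\delta\, g_0(t,0,0)+o(\delta)$. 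Since $0\le Y^2_t$, dividing by $\delta$ and letting $\delta\to0$ gives $\partial_t\varphi+\mathcal L^v\varphi+f(t,x,\varphi,(\mathcal L^{v,(k)}\varphi)_k,v)\ge0$ for each $v\in U$, so the infimum is $\ge0$.

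\emph{Supersolution.} Here the DPP gives, for every $\varepsilon=\delta^2$, an $\varepsilon$-optimal $u^\delta$ with $\varphi(t,x)\ge G^{t,x;u^\delta}[\varphi(t+\delta,\widetilde X_{t+\delta})]-\delta^2$. The same decomposition applies, but with a random control. We bound $g(s,y,z,u^\delta_s)\ge \inf_{u\in U}g_0(s,y,z,u)$ evaluated at the frozen point up to the $\widetilde X$-error. The resulting estimate is uniform in the control, using the compactness of $U$ and the continuity of $F,f$ in $(s,x,u)$, and it gives $-\delta^2\le\delta\inf_u\{\cdots\}+o(\delta)$. This yields the $\le0$ inequality.

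\textbf{Main obstacle.} The main obstacle is the It\^o/Teugels step. The jump martingale $\int\varphi^1\tilde N$ lies in the span of the $H^{(k)}$, with coefficients $\int\varphi^1 p_k\,d\nu$, only if $\zeta\mapsto\varphi^1(s,y,\zeta)/\zeta$ (suitably normalized with respect to $\mu$) belongs to the closed span of the polynomials in $L^2(\mu)$. I would justify this via the exponential-moment condition (ii) on $\nu$, which makes polynomials dense in $L^2(\mu)$. I would also have to verify that $(\mathcal L^{v,(k)}\varphi)_k\in\ell^2$ uniformly near $x$, via Bessel's inequality and the quadratic growth of $\varphi$. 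The $o(\delta)$ estimates are the secondary difficulty: I would control $\int\|Z^2\|^2$ through Lemma~\ref{lem:2.5}, using that $Y^2$ has generator bounded by $C(1+|\widetilde X|^2)$, and then use Lipschitz continuity in $z$.
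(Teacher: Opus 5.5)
Your argument follows the paper's route. You use the DPP and comparison to relate $W(t,x)=\varphi(t,x)$ to $G^{t,x;u}_{t,t+\delta}[\varphi(t+\delta,\widetilde X_{t+\delta})]$, transform $Y-\varphi(s,\widetilde X_s)$ into a zero-terminal BSDE by It\^o's formula and the Teugels expansion (Lemma~\ref{lem:4.2}), and freeze the state at $x$ with an $o(\delta)$ error uniform in the control (Lemma~\ref{lem:4.3}). The differences are minor:
\begin{itemize}
\item For the subsolution half you use constant controls.
\item For the supersolution half you use a $\delta^2$-optimal control with the pointwise bound $g_0(s,0,0,u^\delta_s)\ge\inf_{u\in U}g_0(s,0,0,u)$. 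The paper instead computes $\inf_v Y^{2,v}_t$ exactly via comparison and measurable selection (Lemma~\ref{lem:4.4}).
\item Your remark that the Teugels expansion of the compensated jump sum needs density of polynomials in $L^2(\mu)$ supplies a justification that Lemma~\ref{lem:4.2} omits.
\end{itemize}

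\textbf{The gap: the reduction to global contact.} The step that does not hold as you justify it is the reduction from a local to a global extremum. For this nonlocal equation, changing $\varphi$ away from $x$ changes $\mathcal L^u\varphi(t,x)$ and every $\mathcal L^{u,(k)}\varphi(t,x)$. So growth bounds cannot turn a local extremum into a global one. Yet the global inequality is genuinely used: the comparison step needs $W(t+\delta,\widetilde X_{t+\delta})\le\varphi(t+\delta,\widetilde X_{t+\delta})$ a.s., and $\widetilde X_{t+\delta}$ jumps far from $x$.

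In fact $W$ need not satisfy the local version of Definition~\ref{defn:4.1}. Take $n=1$, $\nu\neq0$, $F\equiv1$, $f\equiv0$ and $\phi\equiv0$, so that $W\equiv0$. Choose $r>0$ with $\nu(\{|\zeta|\ge 2r\})>0$ and set $\varphi(y)=|y-x|^2-K\theta(y-x)$, where $\theta$ is smooth, $0\le\theta\le1$, $\theta=0$ on $[-r,r]$ and $\theta=1$ off $(-2r,2r)$. Then $W-\varphi$ has a local maximum at $(t,x)$ with $W(t,x)=\varphi(t,x)=0$. However,
\[
\partial_t\varphi+\mathcal L^u\varphi(t,x)=\sigma^2+\int_{\mathbb R}\zeta^2\,\nu(d\zeta)-K\int_{\mathbb R}\theta\,d\nu,
\]
which is negative for large $K$.

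The paper's own proof likewise assumes $\varphi\ge W$ on all of $[0,T]\times\mathbb R^n$. What both arguments actually establish is therefore the global-contact definition, which is the standard one for integro-differential equations. You should adopt that definition explicitly rather than claim the reduction is without loss of generality.

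\textbf{Smaller points.}
\begin{itemize}
\item Lemma~\ref{lem:2.5} on $[t,t+\delta]$ only gives $\mathbb E\int_t^{t+\delta}\|Z^2_s\|^2_{\ell^2}ds\le C\delta$. After Cauchy--Schwarz this yields an $O(\delta)$ bound on $\mathbb E\int_t^{t+\delta}\|Z^2_s\|_{\ell^2}ds$, not the $o(\delta)$ you need. To reach your $\delta^{3/2}$ you need the extra step of Lemma~\ref{lem:4.3}. Square the zero-terminal equation and use the isometry $\mathbb E|\sum_k\int Z^{(k)}dH^{(k)}|^2=\mathbb E\int\|Z\|^2_{\ell^2}ds$, which follows from $\langle H^{(k)},H^{(l)}\rangle_t=\delta_{kl}t$. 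Then absorb the $z$-dependence of the generator; this gives $C\delta^2$.
\item Since $\partial_t\varphi$ and $D_x^2\varphi$ are only continuous, the freezing error should be obtained as $o(\delta)$ through a local modulus of continuity, not as $\delta^{1+\epsilon}$. The $o(\delta)$ bound is all you need.
\item In the supersolution step the displayed inequality has the wrong sign. From $Y^2_t\le\delta^2$ and $Y^2_t\ge\delta\inf_{u\in U}g_0(t,0,0,u)+o(\delta)$ one gets $\delta\inf_{u\in U}\{\cdots\}+o(\delta)\le\delta^2$. That is the inequality that yields $\le0$.
\end{itemize}
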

	\begin{proof}
		Obviously, $W(T,x)=\phi(x)$ for all $x\in\mathbb{R}^n$. By Lemmas \ref{lem:3.3} and \ref{lem:3.5}, we know that
		$W(\cdot,\cdot)\in C([0,T]\times \mathbb{R}^n)$. We first prove that $W$ is a viscosity subsolution.
		For each given $(t,x_0)\in [0,T)\times \mathbb{R}^n$, suppose that
		$\varphi(\cdot,\cdot)\in C^{1,2}([0,T]\times \mathbb{R}^n)$
		such that
		\[
		\varphi(t,x_0) = W(t,x_0), \qquad \varphi \geq W \ \text{on} \ [0,T]\times \mathbb{R}^n .
		\]
		
		Consider the following FBSDE: for any $s\in [t,t+\delta]\subset [0,T]$,
		\begin{equation}\label{eq:4.4}
			\left\{
			\begin{aligned}
				d X_s^u &= F\left(s, X_{s-}^u, u_s\right)\, dL_s, \\[4pt]
				d Y_s^u &= -f\left(s, X_{s-}^u, Y_s^u, Z_s^u, u_s\right)\, ds
				+ \sum_{k=1}^{\infty} Z_s^{u,(k)}\, d H_s^{(k)}, \\[4pt]
				X_t^u &= x_0, \quad
				Y_{t+\delta}^u = \varphi\left(t+\delta, X_{t+\delta}^u\right),
			\end{aligned}
			\right.
		\end{equation}
		and
		\begin{equation}\label{eq:4.5}
			dY_s^{1,u} = -F_1\big(s,X_{s-}^{u},Y_s^{1,u},Z_s^{1,u},u_s\big)\, ds
			+ \sum_{k=1}^\infty Z_s^{1,u;(k)}\, dH_s^{(k)},
			\qquad
			Y_{t+\delta}^{1,u} = 0,
		\end{equation}
		where
		\begin{align*}
			F_1(s,x,y,z,u)
			&:= \partial_t \varphi(s,x)
			+ m_1 F(s,x,u)^\top D_x\varphi(s,x)
			+ \frac{1}{2}\, \sigma^{2}\, F(s,x,u)^\top D^2_x\varphi(s,x) \,F(s,x,u) \\
			&\quad
			+ \int_{\mathbb{R}} \varphi^1(s,x,\zeta) \, \nu(d\zeta)
			+ f\big(s,x, y+\varphi(s,x), z+h(s,x,u), u\big).
		\end{align*}
		
		Corresponding to the coefficient sequence of the Teugels martingale family,
		\[
		h(s,x,u)=\{\, h^{(k)}(s,x,u)\,\}_{k\ge 1},
		\]
		the $k$-th component can be written as
		\[
		h^{(k)}(s,x,u)
		= \delta_{k1}\, \frac{1}{a_{11}}\, F(s,x,u)^\top D_x\varphi(s,x)
		+ \int_{\mathbb{R}} \varphi^1(s,x,\zeta)\, p_k(\zeta)\, \nu(d\zeta).
		\]
	\end{proof}
	
	\begin{lem}\label{lem:4.2}
		For any $s \in [t,T]$, we have
		\[
		Y_s^{1,u} = Y_s^u - \varphi(s, X_s^u),
		\qquad
		Z_s^{1,u;(k)} = Z_s^{u,(k)} - h^{(k)}(s, X_{s-}^u, u_s),
		\]
		where
		\[
		h^{(k)}(s, X_{s-}^u, u_s)
		= \delta_{k1} \frac{1}{a_{11}} F(s, X_{s-}^{u}, u_s)^\top D_x\varphi(s, X_{s-}^{u})
		+ \int_{\mathbb{R}} \varphi^1(s, X_{s-}^u, \zeta) p_k(\zeta) \, \nu(d\zeta),
		\]
		$\varphi \in C^{1,2}([0,T]\times \mathbb{R}^n)$ is a smooth test function,
		$\{p_k\}_{k\ge 1}$ is the family of orthogonal polynomials with respect to the measure
		$\sigma^2 \delta_0(d\zeta) + \zeta^2 \nu(d\zeta)$, and
		$\{H^{(k)}\}_{k\ge 1}$ is the corresponding family of Teugels martingales.
	\end{lem}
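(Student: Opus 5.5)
The plan is to apply Itô's formula to $\varphi(s,X_s^u)$ on $[t,t+\delta]$ and then invoke uniqueness for the BSDE \eqref{eq:4.5}. We check that the pair $\bigl(Y_s^u-\varphi(s,X_s^u),\,Z_s^u-h(s,X_{s-}^u,u_s)\bigr)$ solves \eqref{eq:4.5}; uniqueness in Lemma~\ref{lem:2.2} then identifies it with $(Y^{1,u},Z^{1,u})$. The terminal condition is immediate, since $Y^u_{t+\delta}-\varphi(t+\delta,X^u_{t+\delta})=0$. The remaining task is the dynamics.

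\textbf{Step 1 (Itô expansion).} Write $dX_s^u=F(s,X^u_{s-},u_s)\,dL_s$. Using the Lévy--Itô decomposition of $L$, apply Itô's formula for semimartingales with jumps to $\varphi(s,X_s^u)$:
\begin{align*}
d\varphi(s,X_s^u)&=\Bigl[\partial_t\varphi+m_1F^\top D_x\varphi+\tfrac12\sigma^2F^\top D_x^2\varphi F+\int_{\mathbb R}\varphi^1(s,X^u_{s-},\zeta)\,\nu(d\zeta)\Bigr]ds\\
&\quad+F^\top D_x\varphi\,(dL_s-m_1ds)+\int_{\mathbb R}\varphi^1(s,X^u_{s-},\zeta)\,\tilde N(ds,d\zeta).
\end{align*}
Here $F$ and the derivatives of $\varphi$ are evaluated at $(s,X^u_{s-},u_s)$. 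The compensator of the jump sum is $\int\varphi^1\,\nu(d\zeta)\,ds$. Integrability holds because $\varphi^1=O(|F|^2\zeta^2)$ near $\zeta=0$, and $\nu$ has moments of all orders; for large $\zeta$ we may need $\varphi$ of at most quadratic growth.

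\textbf{Step 2 (Teugels representation of the martingale part).} This is the main obstacle. We must show that
\[
F^\top D_x\varphi\,(dL_s-m_1ds)+\int\varphi^1\,d\tilde N=\sum_k h^{(k)}(s,X^u_{s-},u_s)\,dH^{(k)}_s .
\]
The first term equals $\frac1{a_{11}}F^\top D_x\varphi\,dH^{(1)}_s$, which produces the $\delta_{k1}$ part of $h^{(k)}$.

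For the second term, expand $g(\zeta):=\varphi^1(s,x,\zeta)$ in the orthonormal system. Since $g(\zeta)=\zeta\,(g(\zeta)/\zeta)$, and $g(\zeta)/\zeta$ vanishes at $0$ and lies in $L^2(\mu)$, we can use the basis $\{q_{k-1}\}$ of $L^2(\mu)$. Following Nualart--Schoutens, the identity $\int\zeta q_{k-1}(\zeta)\,\tilde N(ds,d\zeta)=dH^{(k)}_s$ holds up to its Brownian part, and the Brownian part vanishes here because $g(\zeta)/\zeta\to0$ at $0$. This gives
\[
\int g\,d\tilde N=\sum_k\Bigl(\int g(\zeta)\,q_{k-1}(\zeta)\,\zeta\,\nu(d\zeta)\Bigr)dH^{(k)}=\sum_k\Bigl(\int g\,p_k\,d\nu\Bigr)dH^{(k)} ,
\]
with coefficients in $\ell^2$ by Parseval.

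This needs some care. Completeness of polynomials in $L^2(\mu)$ is guaranteed by the exponential moment assumption (ii). One should also reconcile the normalization of $p_k$: in the statement, $p_k$ is described as orthogonal with respect to $\mu$, while the paper defines it as $\zeta q_{k-1}$. I will use the latter.

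\textbf{Step 3 (matching drifts).} Subtracting, we get
\[
d\bigl(Y^u_s-\varphi\bigr)=-\bigl[f(s,X^u_{s-},Y^u_s,Z^u_s,u_s)+\partial_t\varphi+\mathcal L^{u}\text{-terms}\bigr]ds+\sum_k\bigl(Z^{u,(k)}_s-h^{(k)}\bigr)dH^{(k)}_s .
\]
Writing $Y^u=Y^{1}+\varphi$ and $Z^u=Z^{1}+h$, the drift is exactly $-F_1(s,X^u_{s-},Y^1_s,Z^1_s,u_s)$. Here $Y^u_s$ may be replaced by $Y^u_{s-}$ and $\varphi(s,X_s)$ by $\varphi(s,X_{s-})$, since these differ only on a $ds$-null set.

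Finally, $F_1$ is Lipschitz in $(y,z)$, and $(Y^1,Z^1)\in\mathcal S^2\times\mathcal P^2(\ell^2)$ by Lemma~\ref{lem:3.2} together with the growth bounds on $h$. Uniqueness from Lemma~\ref{lem:2.2} therefore concludes the proof.
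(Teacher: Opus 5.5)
Your proposal is correct and follows the same route as the paper. Both apply It\^o's formula to $\varphi(s,X^u_s)$, substitute $dL_s=\frac{1}{a_{11}}dH^{(1)}_s+m_1\,ds$, expand the compensated jump sum in Teugels martingales, identify the resulting drift with $F_1$, and conclude by BSDE uniqueness. Your $L^2(\mu)$-expansion of $\varphi^1/\zeta$, whose value $0$ at the atom removes the Brownian components of the $H^{(k)}$, and your replacement of $s$ by $s-$ on a $ds$-null set supply justifications that the paper only asserts.
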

	
	\begin{proof}
		Applying It\^o's formula for L\'{e}vy processes to $\varphi(s, X_s^u)$, we obtain
		\begin{align*}
			\varphi(T, X_T^u) - \varphi(t, X_t^u)
			&= \int_t^T \partial_s \varphi(s, X_{s-}^u) \, ds
			+ \int_t^T F(s, X_{s-}^{u}, u_s)^\top D_x\varphi(s, X_{s-}^{u}) \, dL_s \\
			&\quad + \frac{1}{2} \int_t^T \sigma^{2} F(s, X_{s-}^u, u_s)^\top D^2_x\varphi(s, X_{s-}^u)\,F(s, X_{s-}^u, u_s) \, ds \\
			&\quad + \sum_{t \le s \le T} \Bigl\{
			\varphi(s, X_s^u) - \varphi(s, X_{s-}^u)
			-  D_x \varphi(s, X_{s-}^u)^\top F(s, X_{s-}^u, u_s)\Delta L_s
			\Bigr\}.
		\end{align*}
		
		Define the local increment
		\[
		\varphi^1(s, X_{s-}^u, \zeta)
		:= \varphi(s, X_{s-}^u + F(s, X_{s-}^u, u_s)\zeta)
		- \varphi(s, X_{s-}^u)
		- D_x \varphi(s, X_{s-}^u)^\top F(s, X_{s-}^u, u_s) \zeta,
		\]
		then the jump sum can be written as
		\[
		\sum_{t \le s \le T} \varphi^1(s, X_{s-}^u, \Delta L_s).
		\]
		
		Using the Teugels martingale expansion, the jump sum decomposes into a martingale integral and a compensator:
		\[
		\sum_{t \le s \le T} \varphi^1(s, X_{s-}^u, \Delta L_s)
		= \sum_{k=1}^{\infty} \int_t^T
		\left( \int_{\mathbb{R}} \varphi^1(s, X_{s-}^u, \zeta) p_k(\zeta) \, \nu(d\zeta) \right) dH_s^{(k)}
		+ \int_t^T \int_{\mathbb{R}} \varphi^1(s, X_{s-}^u, \zeta) \, \nu(d\zeta) \, ds.
		\]
		
		Substituting the decomposition of the L\'{e}vy process
		\[
		dL_s = \frac{1}{a_{11}} \, dH_s^{(1)} + m_1 \, ds
		\]
		into the continuous integral term
		$\int_t^T F(s, X_{s-}^{u}, u_s)^\top D_x\varphi(s, X_{s-}^{u}) \, dL_s$, we get
		\[
		\int_t^T F(s, X_{s-}^{u}, u_s)^\top D_x\varphi(s, X_{s-}^{u}) \, dL_s
		= \int_t^T \frac{1}{a_{11}} F(s, X_{s-}^{u}, u_s)^\top D_x\varphi(s, X_{s-}^{u}) \, dH_s^{(1)}
		+ \int_t^T m_1 F(s, X_{s-}^{u}, u_s)^\top D_x\varphi(s, X_{s-}^{u}) \, ds.
		\]
		
		Collecting the continuous drift term, compensator term, and martingale terms, we have
		\begin{align*}
			d\varphi(s, X_{s-}^u)
			&= \Bigl( \partial_s \varphi(s, X_{s-}^u)
			+ m_1 F(s, X_{s-}^u, u_s)^\top D_x\varphi(s, X_{s-}^u) \\
			&\quad + \frac{1}{2} \sigma^{2} F(s, X_{s-}^u, u_s)^\top D^2_x\varphi(s, X_{s-}^u)\,F(s, X_{s-}^u, u_s)
			+ \int_{\mathbb{R}} \varphi^1(s, X_{s-}^u, \zeta) \, \nu(d\zeta) \Bigr) ds \\
			&\quad + \frac{1}{a_{11}} F(s, X_{s-}^u, u_s)^\top D_x\varphi(s, X_{s-}^{u}) \, dH_s^{(1)}
			+ \sum_{k=1}^{\infty} \left( \int_{\mathbb{R}} \varphi^1(s, X_{s-}^u, \zeta) p_k(\zeta) \, \nu(d\zeta) \right) dH_s^{(k)}.
		\end{align*}
		
		Now, substitute the above expansion into the original BSDE for $Y_s^u$:
		\[
		dY_s^{u} = -f(s, X_{s-}^{u}, Y_{s}^{u}, Z_s^{u}, u_s) \, ds
		+ \sum_{k=1}^{\infty} Z_s^{u,(k)} \, dH_s^{(k)},
		\]
		we obtain the deviation for $Y_s^u - \varphi(s, X_s^u)$:
		\[
		\begin{aligned}
			dY_s^{u} - d\varphi(s, X_{s-}^u)
			&= -\Bigl(
			f(s, X_{s-}^{u}, Y_{s}^{1,u} + \varphi(s, X_{s-}^{u}), Z_s^{1,u} + h(s, X_{s-}^{u}, u_s), u_s)
			+ \partial_t \varphi(s, X_{s-}^{u}) \\
			&\quad + m_1 F(s, X_{s-}^{u}, u_s)^\top D_x\varphi(s, X_{s-}^{u})
			+ \frac{1}{2} \sigma^{2} F(s, X_{s-}^u, u_s)^\top D^2_x\varphi(s,X_{s-}^u)\,F(s, X_{s-}^u, u_s) \\
			&\quad + \int_{\mathbb{R}} \varphi^1(s, X_{s-}^{u}, \zeta) \, \nu(d\zeta)
			\Bigr) \, ds \\
			&\quad + \sum_{k=1}^{\infty} \left[ Z_s^{u,(k)} - \left(
			\delta_{k1} \frac{1}{a_{11}} F(s, X_{s-}^{u}, u_s)^\top D_x\varphi(s, X_{s-}^{u})
			+ \int_{\mathbb{R}} \varphi^1(s, X_{s-}^u, \zeta) p_k(\zeta) \, \nu(d\zeta)
			\right) \right] dH_s^{(k)}.
		\end{aligned}
		\]
		
		This matches the BSDE for $Y_s^{1,u}$ with drift function
		\[
		\begin{aligned}
			F_1(s, X_{s-}^{u}, Y_{s}^{1,u}, Z_s^{1,u}, u_s)
			&:= \partial_t \varphi(s, X_{s-}^{u})
			+ m_1 F(s, X_{s-}^{u}, u_s)^\top D_x\varphi(s, X_{s-}^{u}) \\
			&\quad + \frac{1}{2} \sigma^{2} F(s, X_{s-}^u, u_s)^\top D^2_x\varphi(s, X_{s-}^u)\,F(s,X_{s-}^u, u_s)
			+ \int_{\mathbb{R}} \varphi^1(s, X_{s-}^{u}, \zeta) \, \nu(d\zeta) \\
			&\quad + f(s, X_{s-}^{u}, Y_{s}^{1,u} + \varphi(s, X_{s-}^{u}), Z_s^{1,u} + h(s, X_{s-}^{u}, u_s), u_s),
		\end{aligned}
		\]
		and martingale coefficients
		\[
		Z_s^{1,u;(k)}
		= Z_s^{u,(k)}
		- \left(
		\delta_{k1} \frac{1}{a_{11}} F(s, X_{s-}^{u}, u_s)^\top D_x\varphi(s, X_{s-}^{u})
		+ \int_{\mathbb{R}} \varphi^1(s, X_{s-}^u, \zeta) p_k(\zeta) \, \nu(d\zeta)
		\right).
		\]
		
		The coefficient sequence $h(s, X_{s-}^u, u_s) = \{ h^{(k)}(s, X_{s-}^u, u_s) \}_{k \ge 1}$ is defined by
		\[
		h^{(k)}(s, X_{s-}^u, u_s)
		= \delta_{k1} \frac{1}{a_{11}} F(s, X_{s-}^{u}, u_s)^\top D_x\varphi(s, X_{s-}^{u})
		+ \int_{\mathbb{R}} \varphi^1(s, X_{s-}^u, \zeta) p_k(\zeta) \, \nu(d\zeta).
		\]
		
		The terminal condition at $s = t + \delta$ is
		\[
		Y_{t+\delta}^u = \varphi(t+\delta, X_{t+\delta}^u) \implies Y_{t+\delta}^{1,u} = Y_{t+\delta}^u - \varphi(t+\delta, X_{t+\delta}^u) = 0,
		\]
		which matches BSDE \eqref{eq:4.4}. By uniqueness of solutions, we conclude that
		\[
		Y_s^{1,u} = Y_s^u - \varphi(s, X_s^u),
		\]
		and
		\[
		Z_s^{1,u;(k)} = Z_s^{u,(k)} - h^{(k)}(s, X_{s-}^u, u_s).
		\]
		This completes the proof.
	\end{proof}
	
	Consider the following BSDE: for all $s \in [t, t+\delta]$,
	\begin{equation}\label{eq:4.6}
		dY_s^{2,u} = -F_1(s, x_0, 0, 0, u_s)\, ds + \sum_{k=1}^\infty Z_s^{2,u;(k)}\, dH_s^{(k)},
		\quad Y_{t+\delta}^{2,u} = 0.
	\end{equation}
	We have the following estimate.
	
	\begin{lem}\label{lem:4.3}
		For each given $v \in \mathcal{U}^{t}[t, t + \delta]$, it holds that
		\[
		\lvert Y_t^{1,v} - Y_t^{2,v} \rvert \le C \delta^{\frac{3}{2}},
		\]
		where $C$ is a positive constant depending on $x_0$, but independent of $v$ and $\delta$.
	\end{lem}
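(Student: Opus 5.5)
This is the standard Peng-type estimate; we follow it. The plan is to compare the BSDE \eqref{eq:4.5} for $Y^{1,v}$ with the BSDE \eqref{eq:4.6} for $Y^{2,v}$ on the short interval $[t,t+\delta]$. Both have zero terminal value, so the difference $\widehat Y_s:=Y_s^{1,v}-Y_s^{2,v}$, $\widehat Z_s:=Z_s^{1,v}-Z_s^{2,v}$ solves
\[
\widehat Y_s=\int_s^{t+\delta}\Big[F_1(r,X_{r-}^v,Y_r^{1,v},Z_r^{1,v},v_r)-F_1(r,x_0,0,0,v_r)\Big]dr-\sum_{k\ge1}\int_s^{t+\delta}\widehat Z_r^{(k)}dH_r^{(k)} .
\]
Since $\mathcal F_t^t$ is trivial and $v\in\mathcal U^t[t,t+\delta]$, $\widehat Y_t$ is deterministic. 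Taking expectations gives
\[
|\widehat Y_t|\le \mathbb E\int_t^{t+\delta}\big|F_1(r,X_{r-}^v,Y_r^{1,v},Z_r^{1,v},v_r)-F_1(r,x_0,0,0,v_r)\big|\,dr .
\]

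\emph{Step 1: split the integrand.} Using the Lipschitz property of $f$ in $(x,y,z)$ from Assumption~\ref{ass:3.1}, the difference of the $F_1$'s is bounded by
\[
\rho_r:=|\Phi(r,X_{r-}^v,v_r)-\Phi(r,x_0,v_r)|+C|X^v_{r-}-x_0|+C|\varphi(r,X^v_{r-})-\varphi(r,x_0)|+C\|h(r,X^v_{r-},v_r)-h(r,x_0,v_r)\|_{\ell^2}+C|Y^{1,v}_r|+C\|Z^{1,v}_r\|_{\ell^2}.
\]
Here $\Phi$ collects the $\varphi$-dependent drift terms ($\partial_t\varphi$, the $m_1$ term, the $\sigma^2$ term and the $\nu$-integral). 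Since $\varphi\in C^{1,2}$ and $F$ is Lipschitz, these are locally Lipschitz in $x$. For $h$, the orthonormality of $\{p_k\}$ in $L^2(\mu)$ and Bessel's inequality give
\[
\sum_k\Big(\int\varphi^1 p_k\,d\nu\Big)^2\le\int|\varphi^1/\zeta|^2\zeta^2\nu(d\zeta),
\]
which reduces the $h$-difference to an $x$-Lipschitz bound on $\varphi^1(\cdot,\zeta)/\zeta$. I would justify all of these bounds assuming (as one may in Definition~\ref{defn:4.1}, after a localization) that $\varphi$ has bounded derivatives up to order two in $x$ and one in $t$. The first four terms are then $\le C(1+|x_0|)|X^v_{r-}-x_0|$. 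By Lemma~\ref{lem:3.2} and the forward estimate from Lemma~\ref{lem:2.4} on $[t,r]$, $\mathbb E|X^v_r-x_0|^2\le C(1+|x_0|^2)(r-t)$. Integrating, these terms contribute $C\int_t^{t+\delta}(r-t)^{1/2}dr\le C\delta^{3/2}$.

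\emph{Step 2: a priori bound on $(Y^{1,v},Z^{1,v})$.} This is the main obstacle, since we need $\mathbb E\int_t^{t+\delta}(|Y^{1,v}_r|+\|Z^{1,v}_r\|)dr\le C\delta^{3/2}$. I would apply Lemma~\ref{lem:2.5} to \eqref{eq:4.5}, viewed as a Lipschitz BSDE in $(y,z)$ with zero terminal value. This gives
\[
\mathbb E\Big[\sup_s|Y^{1,v}_s|^2+\int_t^{t+\delta}\|Z^{1,v}_r\|^2dr\Big]\le C\,\mathbb E\int_t^{t+\delta}|F_1(r,X^v_{r-},0,0,v_r)|^2dr\le C\delta,
\]
because $F_1(\cdot,\cdot,0,0,\cdot)$ has linear growth and $X^v$ has bounded second moments. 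This alone yields only $\delta^{1/2}\cdot\delta^{1/2}=\delta$. To gain, I would re-run the estimate with source $F_1(r,X^v_{r-},0,0,v_r)-F_1(r,x_0,0,0,v_r)$ plus $F_1(r,x_0,0,0,v_r)$. The second piece is deterministic given $v$ and bounded by a constant, so it produces $|Y^{1,v}_s|\le C\delta$ with $Z$ contribution only through the first piece. Concretely, I would compare $Y^{1,v}$ with $Y^{2,v}$ via Lemma~\ref{lem:2.5} applied to the difference equation. That bounds $\mathbb E[\sup|\widehat Y|^2+\int\|\widehat Z\|^2]$ by $C\delta\,\mathbb E\int_t^{t+\delta}|X^v_r-x_0|^2dr+C\delta^3\le C\delta^3$, after treating the $|Y^{2,v}|,\|Z^{2,v}\|$ terms. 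Here $Z^{2,v}$ comes from a bounded deterministic-integrand BSDE, so $\mathbb E\int\|Z^{2,v}\|^2\le C\delta^3$ by the Itô isometry on $Y^{2,v}_s=\mathbb E[\int_s^{t+\delta}F_1\,dr\mid\mathcal F_s]$, and $|Y^{2,v}|\le C\delta$.

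\emph{Step 3: conclude.} Cauchy--Schwarz then gives $\mathbb E\int_t^{t+\delta}(|Y^{1,v}_r|+\|Z^{1,v}_r\|)dr\le\delta^{1/2}(C\delta^3)^{1/2}+C\delta^2\le C\delta^{3/2}$. Combining with Step 1 yields $|Y^{1,v}_t-Y^{2,v}_t|\le C\delta^{3/2}$. The constants depend only on $x_0$, the bounds on $\varphi$'s derivatives, and the data, not on $v$ or $\delta$.
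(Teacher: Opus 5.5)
Your argument is correct in outline and follows the paper's skeleton. Both proofs bound $|Y^{1,v}_t-Y^{2,v}_t|$ by $\mathbb E\int_t^{t+\delta}|F_1(r,X^v_{r-},Y^{1,v}_r,Z^{1,v}_r,v_r)-F_1(r,x_0,0,0,v_r)|\,dr$ and reduce this to $|X^v_{r-}-x_0|+|Y^{1,v}_r|+\|Z^{1,v}_r\|_{\ell^2}$ via Lipschitz bounds. Both then get $C\delta^{3/2}$ for the $X$-part from $\mathbb E|X^v_r-x_0|^2\le C(r-t)$. Your Bessel argument for $h$, using orthonormality of $q_{k-1}=p_k/\zeta$ in $L^2(\mu)$, supplies a detail the paper skips.

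The routes differ at the key bound $\mathbb E\int_t^{t+\delta}\|Z^{1,v}_r\|_{\ell^2}^2dr\le C\delta^2$. The paper applies the energy identity directly to $Y^{1,v}$: $|Y^{1,v}_t|^2+\mathbb E\int_t^{t+\delta}\|Z^{1,v}_r\|_{\ell^2}^2dr=\mathbb E\big(\int_t^{t+\delta}F_1\,dr\big)^2\le\delta\,\mathbb E\int_t^{t+\delta}|F_1|^2dr\le C\delta^2$. This needs only the crude $O(\delta)$ a priori bound. You instead compare with $(Y^{2,v},Z^{2,v})$ through the difference BSDE and use the isometry only on $Y^{2,v}$. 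Applying that same isometry to $Y^{1,v}$ would make the comparison unnecessary.

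Your route does have a payoff you did not use. The prefactor $C\delta$ in your difference estimate does not come from Lemma~\ref{lem:2.5} as stated. It comes from the sharper standard bound $\mathbb E[\sup|\widehat Y|^2+\int\|\widehat Z\|_{\ell^2}^2]\le C\,\mathbb E\big(\int_t^{t+\delta}|g(r,0,0)|\,dr\big)^2$. With that bound, Step~2 alone gives $|\widehat Y_t|^2\le C\delta^3$, which is the lemma, and Steps~1 and~3 become redundant.

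There are two slips, both harmless:
\begin{itemize}
\item Since $v$ is random, $F_1(r,x_0,0,0,v_r)$ is bounded but not deterministic, so in general $Z^{2,v}\ne0$. The isometry gives $\mathbb E\int_t^{t+\delta}\|Z^{2,v}_r\|_{\ell^2}^2dr=\operatorname{Var}\big(\int_t^{t+\delta}F_1(r,x_0,0,0,v_r)\,dr\big)$, which is of order $\delta^2$, not $\delta^3$.
\item With Lemma~\ref{lem:2.5} as stated, the difference bound is $C\delta^2$ rather than $C\delta^3$.
\end{itemize}
With these corrected orders, Step~3 still yields $C\delta^{3/2}$.

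Finally, there is a caveat you share with the paper, which asserts Lipschitz continuity of $F_1$ in $x$ straight from $\varphi\in C^{1,2}$. Bounded derivatives up to order two in $x$ and one in $t$ do not make $\partial_t\varphi$, $F^\top D_x^2\varphi\,F$ or $\int_{\mathbb R}\varphi^1\,d\nu$ Lipschitz in $x$. For the $\nu$-integral, the $x$-increment of $\varphi^1$ is only $O(\min(|\zeta|^2,|\zeta|\,|x-x_0|))$. That does not integrate to $O(|x-x_0|)$ when $\int_{|\zeta|\le1}|\zeta|\,\nu(d\zeta)=\infty$.

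A genuine $\delta^{3/2}$ rate needs third-order bounds on $\varphi$, as with Peng's $C^{3}_{l,b}$ test functions. Otherwise one gets $o(\delta)$ via a modulus of continuity, which is all the viscosity argument uses. Note also that localizing $\varphi$ is not free here, because $\mathcal{L}^u\varphi$ depends on the global values of $\varphi$.
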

	
	\begin{proof}
		Recall that $(Y_s^{1,v}, Z_s^{1,v})$ and $(Y_s^{2,v}, Z_s^{2,v})$ are solutions to BSDEs \eqref{eq:4.5} and \eqref{eq:4.6} respectively, with the common terminal condition $Y_{t+\delta}^{1,v} = Y_{t+\delta}^{2,v} = 0$.
		By the standard representation formula for solutions to BSDEs with zero terminal value, we can express the initial values as
		\[
		\begin{aligned}
			Y_t^{1,v} &= \mathbb{E}\Bigg[ \int_t^{t+\delta}
			F_1(s, X_s^v, Y_s^{1,v}, Z_s^{1,v}, v_s)\, ds \Bigg], \\[6pt]
			Y_t^{2,v} &= \mathbb{E}\Bigg[ \int_t^{t+\delta}
			F_1(s, x_0, 0, 0, v_s)\, ds \Bigg].
		\end{aligned}
		\]
		Taking the absolute difference of the two equalities and applying the triangle inequality for integrals and expectations, we arrive at
		\[
		|Y_t^{1,v} - Y_t^{2,v}| \le \mathbb{E}\Bigg[ \int_t^{t+\delta} \hat{F}_s \, ds \Bigg],
		\]
		where we define the pointwise difference
		\[
		\hat{F}_s = \big| F_1(s, X_s^v, Y_s^{1,v}, Z_s^{1,v}, v_s) - F_1(s, x_0, 0, 0, v_s) \big|.
		\]
		
		Since $\varphi \in C^{1,2}([0,T] \times \mathbb{R}^n)$ by assumption, the driver function $F_1$ is uniformly Lipschitz continuous with respect to its state variables $(x, y, z)$.
		Therefore, there exists a constant $C > 0$ such that
		\[
		\hat{F}_s \le C \Big( |X_s^v - x_0| + |Y_s^{1,v}| + \|Z_s^{1,v}\|_{\ell^2} \Big)
		\]
		holds for all $s \in [t, t+\delta]$.
		
		We now introduce the perturbed processes to simplify the estimation:
		\[
		\tilde{X}_s^v = X_s^v - x_0,\quad \tilde{Y}_s^v = Y_s^{1,v},\quad \tilde{Z}_s^v = Z_s^{1,v}.
		\]
		By the definition of the state process $X_s^v$, the triple $(\tilde{X}_s^v, \tilde{Y}_s^v, \tilde{Z}_s^v)$ satisfies the following forward-backward stochastic differential system:
		\[
		\left\{
		\begin{aligned}
			d\tilde{X}_s^v &= F\left(s,\, \tilde{X}_{s-}^v + x_0,\, v_s\right)\, dL_s, \\[6pt]
			d\tilde{Y}_s^v &= -F_1\left(s,\, \tilde{X}_{s-}^v + x_0,\, \tilde{Y}_s^v,\, \tilde{Z}_s^v,\, v_s\right)\, ds
			+ \sum_{k=1}^{\infty} \tilde{Z}_s^{v,(k)}\, dH_s^{(k)}, \\[6pt]
			\tilde{X}_t^v &= 0,\qquad \tilde{Y}_{t+\delta}^v = 0.
		\end{aligned}
		\right.
		\]
		
		With the help of the a priori estimates established in Lemmas \ref{lem:2.4} and \ref{lem:2.5}, we obtain the following key moment estimate for the perturbed system:
		\[
		\begin{aligned}
			&\mathbb{E}\left[
			\sup_{t\le s\le t+\delta}\bigl(|\tilde{X}_s^v|^2 + |\tilde{Y}_s^v|^2\bigr)
			+ \int_t^{t+\delta} \|\tilde{Z}_s^v\|_{\ell^2}^2\ ds
			\right] \\
			&\qquad \le C \left\{
			\mathbb{E}\int_t^{t+\delta} |F(s, x_0, v_s)|^2 ds
			+ \mathbb{E}\left[
			\int_t^{t+\delta} |F_1(s, x_0, 0, 0, v_s)|^2 ds
			\right]
			\right\} \\
			&\qquad \le C \delta.
		\end{aligned}
		\]
		
		We first estimate the integral of $|\tilde{X}_s^v|$ over $[t, t+\delta]$.
		Applying the Cauchy--Schwarz inequality for integrals, we have
		\[
		\mathbb{E}\!\left[\int_t^{t+\delta} |\tilde X_s^v|\,ds\right]
		\le
		\delta^{1/2}\,
		\mathbb{E}\!\left[\left(\int_t^{t+\delta} |\tilde X_s^v|^2\,ds\right)^{1/2}\right].
		\]
		Using the bound $\int_t^{t+\delta} |\tilde X_s^v|^2 ds \le \delta \sup_{t\le r\le t+\delta} |\tilde X_r^v|^2$ and taking the expectation, we further get
		\[
		\mathbb{E}\!\left[\int_t^{t+\delta} |\tilde X_s^v|\,ds\right]
		\le
		\delta\,\mathbb{E}\!\left[\sup_{t\le r\le t+\delta} |\tilde X_r^v|\right]
		\le
		\delta \left(\mathbb{E}\!\left[\sup_{t\le r\le t+\delta} |\tilde X_r^v|^2\right]\right)^{1/2}.
		\]
		Combining this with the moment estimate above, we conclude
		\[
		\mathbb{E}\!\left[\int_t^{t+\delta} |\tilde X_s^v|\,ds\right]
		\le C\,\delta^{3/2}.
		\]
		Following exactly the same reasoning, we obtain the estimate for the $\tilde{Y}$-component:
		\[
		\mathbb{E}\!\left[\int_t^{t+\delta} |\tilde Y_s^v|\,ds\right]
		\le C\,\delta^{3/2}.
		\]
		
		Next, we focus on estimating the $\tilde{Z}$-component.
		Applying It\^o's formula to $|Y_s^{1,v}|^2$ and taking expectation, note that the martingale part vanishes and the terminal term satisfies $|Y_{t+\delta}^{1,v}|^2 = 0$, so that
		\[
		\begin{aligned}
			\mathbb{E}\left[ \int_{t}^{t+\delta}\|Z_s^{1,v}\|_{\ell^2}^2\, ds \right]
			&= -|Y_t^{1,v}|^2
			+ \mathbb{E}\left[ \left( \int_{t}^{t+\delta}
			F_1\bigl(s, X_s^v, Y_s^{1,v}, Z_s^{1,v}, v_s\bigr)\, ds \right)^2 \right] \\
			&\le 2\,\mathbb{E}\left[
			\left( \int_{t}^{t+\delta}
			\bigl|F_1\bigl(s, X_s^v, Y_s^{1,v}, Z_s^{1,v}, v_s\bigr)\bigr|
			\, ds \right)^2 \right],
		\end{aligned}
		\]
		where we have used the elementary inequality $|a+b|^2 \le 2|a|^2 + 2|b|^2$ to bound the right-hand side.
		
		The driver function $F_1$ satisfies the linear growth condition, i.e., there exists a constant $C>0$ such that
		\[
		|F_1(s, X_s^v, Y_s^{1,v}, Z_s^{1,v}, v_s)|
		\le C\bigl(1 + |X_s^v| + |Y_s^{1,v}| + \|Z_s^{1,v}\|_{\ell^2}\bigr).
		\]
		Squaring both sides and using the inequality $(a+b+c+d)^2 \le 4(a^2+b^2+c^2+d^2)$, we obtain
		\[
		|F_1(s, X_s^v, Y_s^{1,v}, Z_s^{1,v}, v_s)|^2
		\le C\bigl(1 + |X_s^v|^2 + |Y_s^{1,v}|^2 + \|Z_s^{1,v}\|_{\ell^2}^2\bigr).
		\]
		
		Applying the Cauchy--Schwarz inequality once more to the integral term, we have
		\[
		\begin{aligned}
			&\mathbb{E}\left[\left(\int_t^{t+\delta}
			|F_1(s, X_s^v, Y_s^{1,v}, Z_s^{1,v}, v_s)|\,ds\right)^2\right] \\
			&\le
			\delta \,\mathbb{E}\left[
			\int_t^{t+\delta}
			|F_1(s, X_s^v, Y_s^{1,v}, Z_s^{1,v}, v_s)|^2\,ds
			\right] \\
			&\le C\delta \left(
			\delta + \mathbb{E}\int_t^{t+\delta}
			\bigl(|X_s^v|^2 + |Y_s^{1,v}|^2 + \|Z_s^{1,v}\|_{\ell^2}^2\bigr)\,ds
			\right) \\
			&\le C\delta^2.
		\end{aligned}
		\]
		
		Substituting this back into the earlier estimate for the $Z$-integral yields
		\[
		\mathbb{E}\Big[\int_t^{t+\delta} \|Z_s^{1,v}\|_{\ell^2}^2 ds\Big]
		\le C \delta^2.
		\]
		Using the Cauchy--Schwarz inequality one last time, we get the $L^1$-estimate for $Z_s^{1,v}$:
		\[
		\mathbb{E}\Big[\int_t^{t+\delta} \|Z_s^{1,v}\|_{\ell^2} ds\Big]
		\le \left(\mathbb{E}\Big[\int_t^{t+\delta} \|Z_s^{1,v}\|_{\ell^2}^2 ds\Big]\right)^{\frac{1}{2}} \cdot \delta^{\frac{1}{2}}
		\le C \delta^{\frac{3}{2}}.
		\]
		
		Finally, we combine all three estimates for $\tilde{X}$, $\tilde{Y}$ and $\tilde{Z}$.
		Recall that
		\[
		|Y_t^{1,v} - Y_t^{2,v}| \le C \mathbb{E}\left[ \int_t^{t+\delta} \Big( |\tilde{X}_s^v| + |\tilde{Y}_s^v| + \|\tilde{Z}_s^v\|_{\ell^2} \Big) ds \right].
		\]
		Substituting the respective integral estimates into this inequality, we obtain
		\[
		|Y_t^{1,v} - Y_t^{2,v}| \le C \delta^{\frac{3}{2}},
		\]
		which is exactly the estimate we set out to prove.
	\end{proof}
	
	Now we compute $\inf_{v \in \mathcal{U}^{t}[t, t+\delta]} Y^{2,v}_{t}$.
	
	\begin{lem}\label{lem:4.4}
		We have
		\[
		Y_t^0 = \inf_{v \in \mathcal{U}^{t}[t,t+\delta]} Y_t^{2,v},
		\]
		where $Y_t^0$ is the solution of the following ordinary differential equation:
		\[
		dY_s^0 = -F_0(s,x_0)\,ds, \qquad Y_{t+\delta}^0 = 0, \qquad s \in [t,t+\delta],
		\]
		and
		\[
		F_0(s,x_0) := \inf_{u\in U} F_1(s,x_0,0,0,u).
		\]
	\end{lem}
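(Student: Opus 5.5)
The plan is to establish the two inequalities $Y_t^0 \le Y_t^{2,v}$ for every $v\in\mathcal{U}^t[t,t+\delta]$, and $Y_t^0 \ge Y_t^{2,\bar v}$ for some well-chosen deterministic control $\bar v$. The key observation is that the generator of \eqref{eq:4.6} depends neither on $y$ nor on $z$: $F_1(s,x_0,0,0,u_s)$ is a (bounded, since $U$ is compact and $F_1(\cdot,x_0,0,0,\cdot)$ is continuous) progressively measurable process. Hence, taking conditional expectation in \eqref{eq:4.6} (the stochastic integrals against $H^{(k)}$ are true martingales since $Z^{2,v}\in\mathcal{P}^2$), we get the explicit representation
\[
Y_t^{2,v}=\mathbb{E}\Big[\int_t^{t+\delta}F_1(s,x_0,0,0,v_s)\,ds\,\Big|\,\mathcal{F}_t\Big],
\]
which is deterministic because $v$ is $\{\mathcal{F}^t_s\}$-adapted (as noted before Proposition~\ref{prop:3.1}). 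Likewise $Y_t^0=\int_t^{t+\delta}F_0(s,x_0)\,ds$.

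\emph{Lower bound.} By definition $F_0(s,x_0)\le F_1(s,x_0,0,0,v_s)$ pointwise for every $(s,\omega)$. Integrating and taking expectations in the representation above gives $Y_t^0\le Y_t^{2,v}$ for each $v$. (Equivalently one could invoke Lemma~\ref{lem:2.3} with $\gamma\equiv0$, whose jump condition is trivially satisfied, but the direct representation is simpler.) Taking the infimum over $v$ yields $Y_t^0\le\inf_v Y_t^{2,v}$.

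\emph{Upper bound.} I would construct a deterministic measurable selector $\bar u:[t,t+\delta]\to U$ with $F_1(s,x_0,0,0,\bar u(s))=F_0(s,x_0)$. Since $U$ is compact and $u\mapsto F_1(s,x_0,0,0,u)$ is continuous (continuity of $F,f$ from Assumption~\ref{ass:3.1} and of $\varphi$'s derivatives, plus dominated convergence in the $\nu$-integrals and in $h^{(k)}$, using $\varphi\in C^{1,2}$ and the moment conditions on $\nu$), while $s\mapsto F_1(s,x_0,0,0,u)$ is continuous, the function $(s,u)\mapsto F_1$ is Carathéodory, so a measurable minimizer exists by the measurable selection theorem (Filippov/Kuratowski–Ryll-Nardzewski). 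The deterministic process $\bar v_s:=\bar u(s)$ belongs to $\mathcal{U}^t[t,t+\delta]$, and the representation gives $Y_t^{2,\bar v}=\int_t^{t+\delta}F_0(s,x_0)ds=Y_t^0$. If one prefers to avoid measurable selection, an $\varepsilon$-argument suffices: use uniform continuity on the compact $[t,t+\delta]\times U$ to build a piecewise-constant control on a fine partition with $F_1(s,x_0,0,0,\bar v_s)\le F_0(s,x_0)+\varepsilon$, giving $\inf_v Y_t^{2,v}\le Y_t^0+\varepsilon\delta$, and then let $\varepsilon\to0$.

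The main obstacle is the continuity of $u\mapsto F_1(s,x_0,0,0,u)$, specifically of the infinite sequence $h(s,x_0,u)$ in $\ell^2$ that appears inside $f$. I would handle it by noting that $h^{(k)}$ for $k\ge1$ are the coefficients of $\zeta\mapsto \varphi^1(s,x_0,\zeta)/\zeta$ in the orthonormal system $\{q_{k-1}\}$ of $L^2(\mu)$. Bessel's inequality then bounds $\|h\|_{\ell^2}$, and continuity in $u$ follows from continuity in $u$ of that function in $L^2(\mu)$. That continuity holds by Taylor's bound $|\varphi^1|\le C|F|^2\zeta^2$ together with dominated convergence. Combining this with the Lipschitz property of $f$ in $z$ gives the required continuity.
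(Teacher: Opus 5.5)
Your proposal is correct and follows the paper's argument: the pointwise inequality $F_0(s,x_0)\le F_1(s,x_0,0,0,v_s)$ gives the lower bound, and a deterministic measurable minimizer gives attainment (your explicit representation of $Y_t^{2,v}$ is just Lemma~\ref{lem:2.3} with $\gamma\equiv0$). Your continuity argument for $u\mapsto F_1(s,x_0,0,0,u)$, via Bessel's inequality for $h$ in $\ell^2$, supplies the hypothesis of the selection theorem that the paper leaves implicit; for the dominating bound $|\varphi^1|\le C\zeta^2$ on large jumps, use the quadratic growth of $\varphi$ rather than Taylor's formula, unless $D_x^2\varphi$ is bounded.
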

	
	\begin{proof}
		For any given $v \in \mathcal{U}^{t}[t,t+\delta]$, by the definition of the infimum, we have
		\[
		F_1(s,x_0,0,0,v_s) \ge F_0(s,x_0), \quad \forall s \in [t, t+\delta].
		\]
		This implies $-F_1(s,x_0,0,0,v_s) \le -F_0(s,x_0)$. Since both the BSDE for $Y^{2,v}$ and the ODE for $Y^0$ share the same terminal condition $Y_{t+\delta} = 0$, we can apply the comparison theorem for BSDEs (Lemma \ref{lem:2.3}) to conclude
		\[
		Y_t^{2,v} \ge Y_t^0.
		\]
		
		On the other hand, by the measurable selection theorem, there exists a deterministic control $\mu \in \mathcal{U}^{t}[t,t+\delta]$ such that
		\[
		F_0(s,x_0) = F_1(s,x_0,0,0,\mu_s), \quad \forall s \in [t, t+\delta].
		\]
		For this control $\mu$, the BSDE for $Y^{2,\mu}$ coincides with the ODE for $Y^0$, and thus $Y_t^{2,\mu} = Y_t^0$. This shows that the infimum is attainable, and the conclusion follows.
	\end{proof}
	
	By the dynamic programming principle (Theorem \ref{thm:3.5}), we have
	\[
	W(t, x_0) = \inf_{v \in \mathcal{U}^{t}[t, t+\delta]} G_{t,t+\delta}^{t,x_0;v} \Bigl[ W\bigl(t+\delta, X_{t+\delta}^{t,x_0;v}\bigr) \Bigr].
	\]
	
	Since $\varphi(t+\delta,\cdot) \ge W(t+\delta,\cdot)$ in a neighborhood of $x_0$, and by the monotonicity of the BSDE solution operator (Theorem 3.1 in \cite{elotmani2008backward}), it follows that for each $v \in \mathcal{U}^{t}[t,t+\delta]$,
	\[
	Y_t^v = G_{t,t+\delta}^{t,x_0;v} \Bigl[ \varphi\bigl(t+\delta, X_{t+\delta}^{t,x_0;v}\bigr) \Bigr] \ge G_{t,t+\delta}^{t,x_0;v} \Bigl[ W\bigl(t+\delta, X_{t+\delta}^{t,x_0;v}\bigr) \Bigr].
	\]
	Taking the infimum over all $v \in \mathcal{U}^{t}[t,t+\delta]$, we obtain
	\[
	\inf_{v \in \mathcal{U}^{t}[t,t+\delta]} Y_t^v \ge W(t, x_0).
	\]
	Recall from Lemma \ref{lem:4.2} that $Y_t^{1,v} = Y_t^v - \varphi(t, x_0)$. Since $W - \varphi$ attains a local maximum at $(t, x_0)$, we have $\varphi(t, x_0) = W(t, x_0)$. Therefore,
	\[
	\inf_{v \in \mathcal{U}^{t}[t,t+\delta]} Y_t^{1,v} = \inf_{v \in \mathcal{U}^{t}[t,t+\delta]} \bigl[Y_t^v - \varphi(t,x_0)\bigr] \ge W(t, x_0) - \varphi(t, x_0) = 0.
	\]
	
	By the uniform error estimate in Lemma \ref{lem:4.3}, we have
	\[
	Y_t^{1,v} \ge Y_t^{2,v} - C\delta^{3/2}, \quad \forall v \in \mathcal{U}^{t}[t,t+\delta].
	\]
	Taking the infimum over $v$ gives
	\[
	\inf_{v \in \mathcal{U}^{t}[t,t+\delta]} Y_t^{2,v} \ge \inf_{v \in \mathcal{U}^{t}[t,t+\delta]} Y_t^{1,v} - C\delta^{3/2} \ge -C\delta^{3/2}.
	\]
	Combining this with Lemma \ref{lem:4.4} yields
	\[
	Y_t^0 \ge -C \delta^{3/2}.
	\]
	Integrating the ODE for $Y_s^0$, we find
	\[
	Y_t^0 = \int_t^{t+\delta} F_0(s,x_0)\, ds.
	\]
	Substituting this in, we get
	\[
	\frac{1}{\delta} \int_t^{t+\delta} F_0(s,x_0)\, ds = \frac{1}{\delta} Y_t^0 \ge -C\delta^{1/2}.
	\]
	Letting $\delta \to 0$, the left-hand side converges to $F_0(t, x_0)$ by the fundamental theorem of calculus, and we obtain
	\[
	F_0(t, x_0) \ge 0.
	\]
	Recalling that $F_0(t,x_0) = \inf_{u \in U} F_1(t, x_0, 0, 0, u)$, this implies
	\[
	\inf_{u \in U} F_1(t, x_0, 0, 0, u) \ge 0.
	\]
	By the definition of the Hamiltonian operator,
	\[
	F_1(t, x_0, 0, 0, u) = \partial_t \varphi(t, x_0)
	+ \mathcal L^u \varphi(t,x_0)
	+ f\Bigl(t,x_0,\varphi(t,x_0),
	\bigl(\mathcal{L}^{u,(k)}\varphi(t,x_0)\bigr)_{k\ge1},u\Bigr).
	\]
	Therefore,
	\[
	\partial_t \varphi(t, x_0) +\displaystyle\inf_{u\in U}
	\Bigl\{
	\mathcal L^u \varphi(t,x_0)
	+ f\Bigl(t,x_0,\varphi(t,x_0),
	\bigl(\mathcal{L}^{u,(k)}\varphi(t,x_0)\bigr)_{k\ge1},u\Bigr)
	\Bigr\} \ge 0,
	\]
	which means that $W$ is a viscosity subsolution.
	
	We now prove the viscosity supersolution property.
	
	Let $\varphi \in C^{1,2}$ and assume that $W-\varphi$ attains a local minimum at $(t,x)$ with $W(t,x)=\varphi(t,x)$. Then, for $s\in[t,t+\delta]$,
	\[
	W(s,\widetilde X_s^{t,x;u}) \ge \varphi(s,\widetilde X_s^{t,x;u}),
	\]
	and in particular at $s=t+\delta$. By the comparison theorem for BSDEs and the dynamic programming principle, we have
	\[
	W(t,x)\le G_{t,t+\delta}^{t,x;u}\big[\varphi(t+\delta,\widetilde X_{t+\delta}^{t,x;u})\big].
	\]
	
	Repeating the argument in the subsolution case, with the inequalities reversed, and taking the limit as $\delta\to0$, we obtain the required HJB inequality for the supersolution.

	\section{The Uniqueness of Viscosity Solutions}
	Before proving the uniqueness of viscosity solutions, we briefly clarify the definition of viscosity solutions adopted in Section~4.
	
	It should be emphasized that, in Definition~4.1, the nonlocal term is formulated directly as a full-space integral with respect to theL\'{e}vy measure, without decomposing the jump term into small and large jumps.
	
	In the existing literature, the nonlocal operator is often treated by decomposing theL\'{e}vy jump term into small and large jumps, and the corresponding parts are applied separately to the test function and the value function in the definition of viscosity solutions (see, e.g., \cite{buckdahn2010integral}). Under suitable regularity and integrability conditions on the L\'{e}vy measure and the test functions, such a split formulation can be shown to be equivalent to a unified full-space integral representation (see, e.g., \cite{barles2008second,buckdahn2010integral}).
	
	In contrast to this approach, in the present paper we adopt from the outset a unified formulation based on the full-spaceL\'{e}vy integral operator. It is therefore necessary to verify that this nonlocal operator is well defined under the model assumptions and can be directly employed in the subsequent analysis, in particular in the derivation of the comparison principle.
	
	This is indeed the case in our framework. On the one hand, the L\'{e}vy measure satisfies a second-moment condition; on the other hand, the value function is globally Lipschitz continuous with respect to the spatial variable (see Lemma~\ref{lem:3.3}). These properties jointly ensure the integrability and stability of the nonlocal operator under the full-space formulation. Consequently, Definition 4.1 is mathematically well posed within the viscosity solution framework and is compatible with the general theory of viscosity solutions for integro-differential equations developed in  \cite{barles2008second}.
	
	Based on the above analysis, this section is devoted to the study of the uniqueness of viscosity solutions to the HJB equation \eqref{eq:4.1}.

	For the convenience of the subsequent analysis, we next present a key technical lemma. This lemma characterizes the increment control of the value function in the jump direction and provides a basis for the integrability analysis of the nonlocal term in the sense of full-space integration.
	
	\begin{lem}\label{lem:5.1}
		Under Assumption~\ref{ass:3.1}, and suppose that the value function $W(t,x)$ satisfies the properties stated in Lemma~\ref{lem:3.3}, namely, $W$ is globally Lipschitz continuous with respect to the spatial variable $x$ and has linear growth. Then there exists a constant $C>0$ such that for any $(t,x)\in[0,T]\times\mathbb{R}^n$ and any $\zeta\in\mathbb{R}$, we have
		\[
		\bigl| W(t,x+F(t,x,u)\zeta)-W(t,x) \bigr|
		\le C\,|F(t,x,u)\zeta|.
		\tag{5.1}\label{eq:5.1}
		\]
	\end{lem}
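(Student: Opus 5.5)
The bound (5.1) is a direct consequence of the spatial Lipschitz estimate in Lemma~\ref{lem:3.3}, applied with the second point chosen along the jump direction. Fix $(t,x)\in[0,T]\times\mathbb{R}^n$, $u\in U$ and $\zeta\in\mathbb{R}$, and set
\[
x':=x+F(t,x,u)\zeta\in\mathbb{R}^n .
\]
This is an admissible point because $F(t,x,u)\in\mathbb{R}^n$ and $\zeta$ is a scalar. Lemma~\ref{lem:3.3} gives a constant $C$, depending only on $L_1,L_2,L_3$, $T$ and $\int_{\mathbb{R}}(1\wedge|x|^2)\,\nu(dx)$, such that
\[
|W(t,x')-W(t,x)|\le C|x'-x| = C\,|F(t,x,u)\zeta| .
\]
This is exactly (5.1), with the same constant as in Lemma~\ref{lem:3.3}.

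The only point requiring care is that the constant must not depend on $(t,x,u,\zeta)$. This holds because the Lipschitz constant in Lemma~\ref{lem:3.3} is uniform in $t\in[0,T]$ and in $x,x'\in\mathbb{R}^n$, with no restriction on $|x-x'|$. There is no real obstacle beyond that check.

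For later use in Section~5, I would also record what (5.1) implies. Using $|F(t,x,u)\zeta|\le |F(t,x,u)|\,|\zeta|$ and Assumption~\ref{ass:3.2}, we get $|F(t,x,u)|\le L(1+|x|)$. Hence the increment of $W$ in the jump direction is at most $C(1+|x|)|\zeta|$, uniformly in $u\in U$. This is the integrability bound needed to make sense of the full-space nonlocal term for $W$.
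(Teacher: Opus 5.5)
Your proposal is correct and matches the paper's proof, which likewise just applies the spatial Lipschitz bound of Lemma~\ref{lem:3.3} with $x_1=x+F(t,x,u)\zeta$ and $x_2=x$. Your closing remark on the bound $C(1+|x|)|\zeta|$ is a harmless aside, but note that it relies on Assumption~\ref{ass:3.2}, which is not among this lemma's stated hypotheses.
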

	
	\begin{proof}
		By Lemma~\ref{lem:3.3}, there exists a constant $C>0$ such that for any $x_1,x_2\in\mathbb{R}^n$,
		\[
		\bigl|W(t,x_1)-W(t,x_2)\bigr|
		\le C\,|x_1-x_2|.
		\]
		Taking $x_1=x+F(t,x,u)\zeta$ and $x_2=x$, we obtain
		\[
		\bigl| W(t,x+F(t,x,u)\zeta)-W(t,x) \bigr|
		\le C\,|F(t,x,u)\zeta|,
		\]
		which completes the proof.
	\end{proof}
	
	\begin{rmk}
		Lemma~\ref{lem:5.1} characterizes the increment control of the value function in the jump direction and plays a fundamental role in the nonlocal operator in the full-space integral form adopted in this paper. Under the assumption that the L\'{e}vy measure satisfies a second-moment condition, this lemma guarantees a linear bound of
		\(
		|W(t,x+F(t,x,u)\zeta)-W(t,x)|
		\)
		with respect to the jump size $\zeta$ in the large-jump region, thereby ensuring the integrability of the corresponding integral term.
		
		On the other hand, the integral term over the small-jump region can be controlled by the second-order smoothness of the test function
		$\varphi\in C^{1,2}$
		together with the second-moment condition of the L\'{e}vy measure. Therefore, the nonlocal operator in the full-space integral sense is well defined, and consequently, the definition of viscosity solutions adopted in this paper is mathematically justified.
	\end{rmk}
	
	\begin{lem}\label{lem:5.2}
		Let $W_1$ be a viscosity subsolution of the HJB equation \eqref{eq:4.1}, and let $W_2$ be the corresponding viscosity supersolution. Suppose that
		$W_1, W_2 \in C\bigl([0,T]\times\mathbb{R}^n\bigr)$
		and both have linear growth. Furthermore, assume that Assumption~\ref{ass:3.1} holds and that the L\'{e}vy measure $\nu$ satisfies the second-moment condition
		\[
		\int_{\mathbb{R}} (1 \wedge |x|^2)\, \nu(dx) < +\infty.
		\]
		Define the difference function
		\[
		\omega := W_1 - W_2 .
		\]
		Then $\omega$ is a viscosity subsolution of the following auxiliary equation:
		\begin{equation*}\tag{5.2}\label{eq:5.2}
			\begin{cases}
				\displaystyle
				\partial_t \omega(t,x)
				+
				\sup_{u\in U}
				\Bigl\{
				\mathcal{L}^u \omega(t,x)
				+
				L_2 \lvert \omega(t,x) \rvert
				+
				L_3 \bigl\| \bigl( \mathcal{L}^{u,(k)} \omega(t,x) \bigr)_{k\ge 1} \bigr\|_{\ell^2}
				\Bigr\}
				= 0,
				& (t,x)\in [0,T)\times\mathbb{R}^n,\\[0.8em]
				\omega(T,x)=0,
				& x\in\mathbb{R}^n.
			\end{cases}
		\end{equation*}
		Here the constants $L_2, L_3 > 0$ depend only on the Lipschitz constants of the function $f$ in Assumption~\ref{ass:3.1}.
	\end{lem}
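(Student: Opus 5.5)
The proof follows the standard doubling-of-variables route for showing that the difference of a sub- and a supersolution is a subsolution of the linearized (Pucci-type) equation, as in Barles--Imbert \cite{barles2008second} and Li--Peng \cite{lipeng2009stochastic}. The terminal condition is immediate: $\omega(T,x)=W_1(T,x)-W_2(T,x)\le \phi(x)-\phi(x)=0$. For the interior inequality, fix a test function $\varphi\in C^{1,2}$ with at most quadratic growth such that $\omega-\varphi$ has a local maximum at $(\bar t,\bar x)\in[0,T)\times\mathbb{R}^n$ with $\omega(\bar t,\bar x)=\varphi(\bar t,\bar x)$. By adding $|t-\bar t|^2+|x-\bar x|^4$ we may assume the maximum is strict, and by the usual localization of the nonlocal term, global on a fixed ball.

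We then double the variables. Consider
\[
\Phi_\varepsilon(t,x,s,y)=W_1(t,x)-W_2(s,y)-\varphi(t,x)-\frac{|x-y|^2+|t-s|^2}{\varepsilon^2}
\]
on a compact neighbourhood of $(\bar t,\bar x,\bar t,\bar x)$. Let $(t_\varepsilon,x_\varepsilon,s_\varepsilon,y_\varepsilon)$ be a maximizer. By the strictness of the maximum and the continuity of $W_1,W_2$, these points converge to $(\bar t,\bar x)$, $|x_\varepsilon-y_\varepsilon|^2/\varepsilon^2\to0$, and $W_1(t_\varepsilon,x_\varepsilon)\to W_1(\bar t,\bar x)$, $W_2(s_\varepsilon,y_\varepsilon)\to W_2(\bar t,\bar x)$. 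Then $W_1-\varphi_1$ with $\varphi_1(t,x)=\varphi(t,x)+\varepsilon^{-2}(|x-y_\varepsilon|^2+|t-s_\varepsilon|^2)$ has a local maximum at $(t_\varepsilon,x_\varepsilon)$. Likewise $W_2-\varphi_2$ with $\varphi_2(s,y)=-\varepsilon^{-2}(|x_\varepsilon-y|^2+|t_\varepsilon-s|^2)$ has a local minimum at $(s_\varepsilon,y_\varepsilon)$, after adjusting constants so that equality holds at the touching points. Applying Definition~\ref{defn:4.1} to $W_1$ (sub) and $W_2$ (super) and subtracting, we use that for each $u$
\[
\inf_u A_u-\inf_u B_u\le \sup_u (A_u-B_u).
\]
We therefore have to bound $A_u-B_u$, where $A_u,B_u$ are the bracketed Hamiltonian expressions evaluated for $\varphi_1$ at $(t_\varepsilon,x_\varepsilon)$ and for $\varphi_2$ at $(s_\varepsilon,y_\varepsilon)$. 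The time derivatives combine to $\partial_t\varphi$. The $f$-terms are controlled by Assumption~\ref{ass:3.1}:
\[
f(\cdot,a,b,u)-f(\cdot,a',b',u)\le L_1|x_\varepsilon-y_\varepsilon|+L_2|W_1-W_2|+L_3\|\mathcal{L}^{u,(k)}\varphi_1-\mathcal{L}^{u,(k)}\varphi_2\|_{\ell^2},
\]
and the term $L_1|x_\varepsilon-y_\varepsilon|$ vanishes in the limit.

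The key step is passing to the limit in the local and nonlocal parts of $\mathcal{L}^u$ and $\mathcal{L}^{u,(k)}$. The gradient terms $D_x\varphi_1-D_y\varphi_2=D_x\varphi+2(x_\varepsilon-y_\varepsilon)\varepsilon^{-2}-2(x_\varepsilon-y_\varepsilon)\varepsilon^{-2}$ reduce to $D_x\varphi$, up to the discrepancy $F(t_\varepsilon,x_\varepsilon,u)-F(s_\varepsilon,y_\varepsilon,u)$ multiplying $2(x_\varepsilon-y_\varepsilon)/\varepsilon^2$. This product is $O(|x_\varepsilon-y_\varepsilon|^2/\varepsilon^2)\to0$ by the Lipschitz property of $F$. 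For the second-order terms I would use Ishii's lemma in its nonlocal form (Barles--Imbert, where the jump integral is split into $|\zeta|<\kappa$ evaluated on test functions and $|\zeta|\ge\kappa$ evaluated on $W_i$ themselves). With the matrix inequality this gives
\[
\sigma^2\bigl(F_\varepsilon^\top X F_\varepsilon-G_\varepsilon^\top Y G_\varepsilon\bigr)\le \sigma^2F^\top D^2\varphi F+O\bigl(|x_\varepsilon-y_\varepsilon|^2/\varepsilon^2\bigr).
\]
In the large-jump part, we use that $\Phi_\varepsilon$ is maximal at the doubled point. Evaluating it at the shifted points $(x_\varepsilon+F_\varepsilon\zeta,\,y_\varepsilon+G_\varepsilon\zeta)$ shows that the difference of the two jump increments of $W_1,W_2$ is bounded by the increment of $\varphi$ plus $O(|F_\varepsilon-G_\varepsilon|^2|\zeta|^2/\varepsilon^2)$. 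Integrability against $\nu$ is guaranteed by Lemma~\ref{lem:5.1} (linear increments of Lipschitz functions) together with the second-moment condition on $\nu$. Letting $\varepsilon\to0$ and then $\kappa\to0$ recovers $\mathcal{L}^u\varphi(\bar t,\bar x)$ on the right-hand side.

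The main obstacle is the $\ell^2$ term $\|\mathcal{L}^{u,(k)}\varphi_1-\mathcal{L}^{u,(k)}\varphi_2\|_{\ell^2}$. It is not sign-controlled by the maximum principle, since the weights $p_k(\zeta)$ change sign, so we need genuine convergence in $\ell^2$ rather than a one-sided inequality. My plan is to use that $\{q_{k-1}\}$ is orthonormal in $L^2(\mu)$ with $\mu(d\zeta)=\zeta^2\nu(d\zeta)+\sigma^2\delta_0$, so that Bessel's inequality applies. Writing the $k$-th component as $\int g(\zeta)\zeta^{-1}\,q_{k-1}(\zeta)\,\zeta^2\nu(d\zeta)$ with $g$ the jump increment, the $\ell^2$ norm is bounded by $\bigl(\int |g(\zeta)|^2\zeta^{-2}\nu(d\zeta)\bigr)^{1/2}$, plus the explicit $k=1$ gradient term. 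This term tends to zero for the difference of increments as $\varepsilon\to0$, by dominated convergence using Lemma~\ref{lem:5.1} and the $C^{1,2}$ bound $|g|\le C|\zeta|^2$ near $0$. Collecting the terms, $\omega(\bar t,\bar x)=\lim(W_1-W_2)$ gives
\[
\partial_t\varphi+\sup_u\bigl\{\mathcal{L}^u\varphi+L_2|\omega|+L_3\|(\mathcal{L}^{u,(k)}\varphi)_k\|_{\ell^2}\bigr\}\ge0
\]
at $(\bar t,\bar x)$. Here the supremum is taken over the compact set $U$, and the continuity of $F,f$ in $u$ lets us extract a convergent subsequence of near-optimal controls.
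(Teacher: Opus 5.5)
Your architecture is the paper's: doubling with quadratic penalization, a nonlocal Crandall--Ishii lemma, $\inf_u A_u-\inf_u B_u\le\sup_u(A_u-B_u)$, and the Lipschitz bound on $f$. However, the step you single out as the main obstacle does not go through as proposed. After the nonlocal Jensen--Ishii lemma, the jump increments on $\{|\zeta|\ge\kappa\}$ are those of $W_1$ at $(t_\varepsilon,x_\varepsilon)$ and of $W_2$ at $(s_\varepsilon,y_\varepsilon)$. As $\varepsilon\to0$ their difference converges to $\omega(\bar t,\bar x+F\zeta)-\omega(\bar t,\bar x)$, which is an increment of $\omega$, not of $\varphi$. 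The defect $(\omega-\varphi)(\bar t,\bar x+F\zeta)\le0$ is generically nonzero. Because the $p_k$ change sign, nothing relates the $\ell^2$ norm of the resulting vector to $\|(\mathcal{L}^{u,(k)}\varphi(\bar t,\bar x))_{k\ge1}\|_{\ell^2}$. Bessel plus dominated convergence only identifies the limit as the $\omega$-based vector, which is exactly the one-sided information you said was insufficient.

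Two further problems compound this. On $\{|\zeta|<\kappa\}$ the increments of $\varphi_1,\varphi_2$ contain $\varepsilon^{-2}(|F_\varepsilon|^2+|G_\varepsilon|^2)\zeta^2$, so $|g|\le C\zeta^2$ is not uniform in $\varepsilon$. With your order of limits ($\varepsilon\to0$ before $\kappa\to0$), even the $\mathcal{L}^u$ part blows up. The cutoff must go first, or you need $\kappa=\kappa(\varepsilon)$ with $\varepsilon^{-2}\int_{|\zeta|<\kappa}\zeta^2\,\nu(d\zeta)\to0$. Second, the Barles--Imbert lemma presupposes that the Hamiltonian is monotone in its nonlocal argument (degenerate ellipticity). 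That fails when $f$ is only $\ell^2$-Lipschitz in $z=(\mathcal{L}^{u,(k)}v)_{k\ge1}$ with sign-changing kernels, so you cannot invoke the lemma as a black box.

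The missing idea is to use the structure in Assumption~\ref{ass:3.1} rather than the bare Lipschitz bound in $z$.
\begin{itemize}
\item Write $f=f^1+\sum_k\gamma^{(k)}z^{(k)}$. Since $\Delta H^{(k)}_t=p_k(\Delta L_t)$, the condition $\sum_k\gamma^{(k)}\Delta H^{(k)}>-1$ says $1+\sum_k\gamma^{(k)}p_k(\zeta)>0$ for $\nu$-a.e.\ $\zeta$.
\item All jump terms of $A_u-B_u$ then combine into one integral against the nonnegative kernel $(1+\sum_k\gamma^{(k)}p_k)\nu$. This is what makes the HJB degenerate elliptic, so the nonlocal Ishii lemma is legitimately available.
\item The shifted-point maximum argument now gives the needed one-sided bound. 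In the limit you obtain $\mathcal{L}^u\varphi+\sum_k\gamma^{(k)}\mathcal{L}^{u,(k)}\varphi$ at $(\bar t,\bar x)$.
\item Only then apply Cauchy--Schwarz with $\|\gamma\|_{\ell^2}\le L_3$ to reach $L_3\|(\mathcal{L}^{u,(k)}\varphi)_{k\ge1}\|_{\ell^2}$.
\end{itemize}
The paper's own proof does not supply this step either. It bounds the $f$-difference by $L_3\|(\mathcal{L}^{u,(k)}(\varphi_1-\varphi_2))_{k\ge1}\|_{\ell^2}$ and lets the parameters tend to zero without controlling the $\varepsilon^{-2}$ contributions.

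Smaller points:
\begin{itemize}
\item Bessel gives $(\int|g|^2\,d\nu)^{1/2}$, not $(\int|g|^2\zeta^{-2}\,d\nu)^{1/2}$.
\item Lemma~\ref{lem:5.1} concerns the value function, whereas here $W_1,W_2$ are only continuous with linear growth. That growth suffices on large jumps.
\item $F$ is only continuous in $t$, so use a separate time penalization sent to zero first.
\end{itemize}
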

	
	\begin{proof}
		Let $\phi \in C^{1,2}([0,T]\times \mathbb{R}^n)$ with at most quadratic growth, and assume that $(t_0,x_0)$ is a local maximum point of $\omega - \phi$.
		
		Take $\varepsilon,\alpha,\eta>0$ and define the function of four variables
		\[
		\Phi_{\varepsilon,\alpha}(t,x,s,y)
		:= W_1(t,x) - W_2(s,y) - \phi(t,x) - \psi(t,x,s,y),
		\tag{5.3}\label{eq:5.3}
		\]
		where
		\[
		\psi(t,x,s,y) := \frac{|x-y|^2}{\varepsilon^2} + \frac{|t-s|^2}{\alpha^2} + \eta(|x|^2+|y|^2).
		\tag{5.4}\label{eq:5.4}
		\]
		Since $W_1,W_2$ have at most polynomial growth and the penalty term $-\eta(|x|^2+|y|^2)\to -\infty$,
		the function $\Phi_{\varepsilon,\alpha}$ attains a maximum on
		$[0,T]\times\mathbb{R}^n\times[0,T]\times\mathbb{R}^n$.
		Denote a maximum point by $(\bar t,\bar x,\bar s,\bar y)$.
		
		By the definition of the maximum point, for any $(t,x)$,
		\[
		\begin{aligned}
			&W_1(\bar t,\bar x) - W_2(\bar s,\bar y) - \phi(\bar t,\bar x)
			- \frac{|\bar x-\bar y|^2}{\varepsilon^2} - \frac{|\bar t-\bar s|^2}{\alpha^2} - \eta(|\bar x|^2+|\bar y|^2) \\
			&\quad \ge W_1(t,x) - W_2(t,x) - \phi(t,x) - 2\eta |x|^2 .
		\end{aligned}
		\]
		
		Taking $x=0$ and defining
		\[
		C_0 := \sup_{t\in[0,T]} \bigl| W_1(t,0) - W_2(t,0) - \phi(t,0) \bigr|,
		\]
		by the continuity of $W_1,W_2$ and the boundedness of $\phi$, it is easy to see that $C_0<\infty$.
		
		Thus,
		\[
		\frac{|\bar x-\bar y|^2}{\varepsilon^2} + \frac{|\bar t-\bar s|^2}{\alpha^2}
		\le W_1(\bar t,\bar x) - W_2(\bar s,\bar y) - \phi(\bar t,\bar x) - \eta(|\bar x|^2+|\bar y|^2) - C_0 .
		\]
		
		Since $W_1$ and $W_2$ have at most polynomial growth, the test function
		$\phi \in C^{1,2}([0,T]\times\mathbb{R}^n)$ is fixed, and the penalty term
		$-\eta(|\bar x|^2+|\bar y|^2)$ is quadratic, the right-hand side is bounded.
		Otherwise, this would contradict the existence of a maximum point of
		$\Phi_{\varepsilon,\alpha}$.
		
		Therefore, there exists a constant $C>0$, independent of $\varepsilon,\alpha,\eta$, such that
		\begin{equation}\tag{5.5}\label{eq:5.5}
			\frac{|\bar x-\bar y|^2}{\varepsilon^2}
			+
			\frac{|\bar t-\bar s|^2}{\alpha^2}
			\le C .
		\end{equation}
		
		As $\varepsilon\to0$ and $\alpha\to0$, we have
		$(\bar t,\bar x,\bar s,\bar y)\to(t_0,x_0,t_0,x_0)$.
		
		By the Crandall--Ishii type lemma for the IPDE case
		(see Theorem~2.2 in \cite{jakobsen2005continuous}),
		there exist symmetric matrices $X,Y\in\mathbb{S}^n$ such that
		\[
		(\partial_t\psi,D_x\psi,X)
		\in\overline{\mathcal {P}}^{2,+}W_1(\bar t,\bar x),\qquad
		(-\partial_s\psi,-D_y\psi,Y)
		\in\overline{\mathcal {P}}^{2,-}W_2(\bar s,\bar y),
		\]
		and the matrix inequality
		\begin{equation}\tag{5.6}\label{eq:5.6}
			\begin{pmatrix}
				X & 0\\
				0 & -Y
			\end{pmatrix}
			\le
			\frac{2}{\varepsilon^2}
			\begin{pmatrix}
				I & -I\\
				-I & I
			\end{pmatrix}
			+
			2\eta
			\begin{pmatrix}
				I & 0\\
				0 & I
			\end{pmatrix}
		\end{equation}
		holds.
		
		On the other hand, since $(\bar t,\bar x,\bar s,\bar y)$ is a maximum point of
		$\Phi_{\varepsilon,\alpha}$, the function
		\[
		\varphi_1(t,x):=\phi(t,x)+\psi(t,x,\bar s,\bar y)
		\]
		touches $W_1$ from above at $(\bar t,\bar x)$, while
		\[
		\varphi_2(s,y):=-\psi(\bar t,\bar x,s,y)
		\]
		touches $W_2$ from below at $(\bar s,\bar y)$.
		Hence, they can be taken as test functions for $W_1$ and $W_2$, respectively.
		
		Since $W_1$ is a viscosity subsolution of the HJB equation \eqref{eq:4.1}, and $W_2$ is the corresponding viscosity supersolution,
		by Definition~4.1 applied to the test functions $\varphi_1,\varphi_2$, we have
		\[
		\partial_t \varphi_1(\bar t,\bar x)
		+
		\inf_{u\in U}
		\Bigl\{
		H\bigl(
		\bar t,\bar x,
		\varphi_1(\bar t,\bar x),
		D_x \varphi_1(\bar t,\bar x),
		D^2_x \varphi_1(\bar t,\bar x),
		u
		\bigr)
		\Bigr\}
		\ge 0,
		\]
		and
		\[
		\partial_s \varphi_2(\bar s,\bar y)
		+
		\inf_{u\in U}
		\Bigl\{
		H\bigl(
		\bar s,\bar y,
		\varphi_2(\bar s,\bar y),
		D_y \varphi_2(\bar s,\bar y),
		D^2_y \varphi_2(\bar s,\bar y),
		u
		\bigr)
		\Bigr\}
		\le 0.
		\]
		Here,
		\[
		\partial_t \varphi_1=\partial_t\phi+\partial_t\psi,\quad
		D_x\varphi_1=D_x\phi+D_x\psi,\quad
		D^2_x\varphi_1=X,
		\]
		and
		\[
		\partial_s\varphi_2=-\partial_s\psi,\quad
		D_y\varphi_2=-D_y\psi,\quad
		D^2_y\varphi_2=Y.
		\]
		
		Subtracting the two inequalities and using the fact that
		\(
		\partial_t\psi+\partial_s\psi=0
		\),
		we obtain
		\[
		\partial_t \phi(\bar t, \bar x) + \inf_{u\in U} H_1 - \inf_{u\in U} H_2 \ge 0,
		\tag{5.7}\label{eq:5.7}
		\]
		where
		\[
		H_1 =H\bigl(\bar t,\bar x,\varphi_1(\bar t,\bar x),D_x\varphi_1(\bar t,\bar x),D^2_x \varphi_1(\bar t,\bar x),u\bigr),
		\quad
		H_2 =H\bigl(\bar s,\bar y,\varphi_2(\bar s,\bar y),D_y \varphi_2(\bar s,\bar y),D^2_y \varphi_2(\bar s,\bar y),u\bigr).
		\]
		Furthermore,
		\[
		\inf_{u\in U} H_1- \inf_{u\in U} H_2 \le \sup_{u\in U}  \bigl(  H_1 - H_2 \bigr).
		\tag{5.8}\label{eq:5.8}
		\]
		
		Fix any $u\in U$ and denote
		\[
		F_1 = F(\bar t,\bar x,u),
		\qquad
		F_2 = F(\bar s,\bar y,u).
		\]
		
		Then
		\[
		\begin{aligned}
			H_1 - H_2
			&= m_1\bigl(F_1^\top D_x\varphi_1 - F_2^\top D_y\varphi_2\bigr) \\
			&\quad + \frac12 \sigma^2\bigl(F_1^\top D^2_x \varphi_1 F_1 - F_2^\top D^2_y \varphi_2 F_2\bigr) \\
			&\quad + \int_{\mathbb{R}}
			\Big[ \varphi_1(\bar t,\bar x + F_1\zeta) - \varphi_1(\bar t,\bar x)
			- D_x\varphi_1^\top F_1\zeta \Big] \,\nu(d\zeta) \\
			&\quad - \int_{\mathbb{R}}
			\Big[ \varphi_2(\bar s,\bar y + F_2\zeta) - \varphi_2(\bar s,\bar y)
			- D_y\varphi_2^\top F_2\zeta \Big] \,\nu(d\zeta) \\
			&\quad + f\bigl(\bar t,\bar x,\varphi_1(\bar t,\bar x),(\mathcal{L}^{u,(k)}\varphi_1(\bar t,\bar x))_{k\ge1},u\bigr) \\
			&\quad - f\bigl(\bar s,\bar y,\varphi_2(\bar s,\bar y),(\mathcal{L}^{u,(k)}\varphi_2(\bar s,\bar y))_{k\ge1},u\bigr).
		\end{aligned}
		\tag{5.9}\label{eq:5.9}
		\]
		
		We estimate each term in $H_1-H_2$ in order to relate $\mathcal{L}^u \varphi_1(\bar t,\bar x)-\mathcal{L}^u \varphi_2(\bar s,\bar y)$ to $\mathcal{L}^u \phi(\bar t,\bar x)$.
		
		From the gradient estimate of the penalization function, we have
		\[
		|D_x\psi|+|D_y\psi|
		\le C\!\left(\frac{|x-y|}{\varepsilon^2}+\eta(|x|+|y|)\right).
		\]
		Moreover, at the maximum point $(\bar t,\bar x,\bar s,\bar y)$, by \eqref{eq:5.5} it holds that
		\[
		|\bar x-\bar y|\le C\varepsilon,\qquad
		|\bar t-\bar s|\le C\alpha .
		\]
		
		Note that
		\[
		F_1^{\top}D_x\varphi_1-F_2^{\top}D_y\varphi_2
		=F_1^{\top}D_x\phi+F_1^{\top}D_x\psi+F_2^{\top}D_y\psi ,
		\]
		and
		\[
		\begin{aligned}
			|F_1^{\top}D_x\psi+F_2^{\top}D_y\psi|
			&=|(F_1-F_2)^{\top}D_x\psi+F_2^{\top}(D_x\psi+D_y\psi)| \\
			&\le |F_1-F_2|\,|D_x\psi|
			+|F_2|(|D_x\psi|+|D_y\psi|).
		\end{aligned}
		\]
		
		By the Lipschitz continuity of $F$ with respect to $(t,x)$ and the linear growth condition, we have
		\[
		|F_1-F_2|\le C(|\bar t-\bar s|+|\bar x-\bar y|),\qquad
		|F_2|\le C(1+|\bar y|).
		\]
		Combining the above estimates and using the boundedness ensured by the penalization term
		\[
		-\eta(|x|^2+|y|^2),
		\]
		we obtain
		\[
		|F_1^{\top}D_x\psi+F_2^{\top}D_y\psi|
		\le C\!\left(\frac{|\bar x-\bar y|^2}{\varepsilon^2}+\eta\right).
		\]
		Consequently,
		\[
		F_1^{\top}D_x\varphi_1-F_2^{\top}D_y\varphi_2
		\le F_1^{\top}D_x\phi
		+ C\!\left(\frac{|\bar x-\bar y|^2}{\varepsilon^2}+\eta\right).
		\]
		
		Multiplying both sides by the constant $m_1$ (and absorbing it into the constant $C$), we derive
		\[
		m_1\bigl(F_1^{\top}D_x\varphi_1-F_2^{\top}D_y\varphi_2\bigr)
		\le m_1 F_1^{\top}D_x\phi
		+ C\!\left(\frac{|\bar x-\bar y|^2}{\varepsilon^2}+\eta\right).
		\tag{5.10}\label{eq:5.10}
		\]
		
		By the matrix inequality \eqref{eq:5.6}, we have
		\[
		F_1^\top D^2_x \varphi_1 F_1 - F_2^\top D^2_y \varphi_2 F_2
		\le
		\frac{2}{\varepsilon^2}|F_1-F_2|^2
		+2\eta\bigl(|F_1|^2+|F_2|^2\bigr).
		\]
		Using the Lipschitz continuity of $F$, we obtain
		\[
		F_1^\top D^2_x \varphi_1 F_1 - F_2^\top D^2_y \varphi_2 F_2
		\le C\left(\frac{|\bar x-\bar y|^2}{\varepsilon^2}+\eta\right).
		\tag{5.11}\label{eq:5.11}
		\]
		
		By the maximality of $\Phi_{\varepsilon,\alpha}$ at $(\bar t,\bar x,\bar s,\bar y)$, for all $\zeta \in \mathbb{R}$,
		\[
		\Phi_{\varepsilon,\alpha}(\bar t,\bar x + F_1 \zeta, \bar s, \bar y + F_2 \zeta)
		\le \Phi_{\varepsilon,\alpha}(\bar t, \bar x, \bar s, \bar y).
		\]
		
		Hence,
		\[
		\begin{aligned}
			&W_1(\bar t, \bar x + F_1 \zeta) - W_1(\bar t, \bar x) - \bigl(W_2(\bar s, \bar y + F_2 \zeta) - W_2(\bar s, \bar y)\bigr) \\
			&\quad \le \phi(\bar t, \bar x + F_1 \zeta) - \phi(\bar t, \bar x)
			+ \psi(\bar t, \bar x + F_1 \zeta, \bar s, \bar y + F_2 \zeta) - \psi(\bar t, \bar x, \bar s, \bar y).
		\end{aligned}
		\]
		
		Define
		\[
		\varphi_1(t,x) = \phi(t,x) + \psi(t,x,\bar s,\bar y), \qquad
		\varphi_2(s,y) = -\psi(\bar t,\bar x,s,y).
		\]
		
		Then
		\[
		\begin{aligned}
			&W_1(\bar t, \bar x + F_1 \zeta) - W_1(\bar t, \bar x) - \bigl(W_2(\bar s, \bar y + F_2 \zeta) - W_2(\bar s, \bar y)\bigr) \\
			&\quad \le \varphi_1(\bar t, \bar x + F_1 \zeta) - \varphi_1(\bar t, \bar x)
			- \varphi_2(\bar s, \bar y + F_2 \zeta) + \varphi_2(\bar s, \bar y).
		\end{aligned}
		\]
		
		Integrating with respect to $\nu(d\zeta)$, we have
		\[
		\begin{aligned}
			&\int_{\mathbb{R}} \bigl[ W_1(\bar t, \bar x + F_1 \zeta) - W_1(\bar t, \bar x) - D_x \varphi_1^\top F_1 \zeta \bigr] \, \nu(d\zeta) \\
			&\quad - \int_{\mathbb{R}} \bigl[ W_2(\bar s, \bar y + F_2 \zeta) - W_2(\bar s, \bar y) - D_y \varphi_2^\top F_2 \zeta \bigr] \, \nu(d\zeta) \\
			&\le \int_{\mathbb{R}} \bigl[ \varphi_1(\bar t, \bar x + F_1 \zeta) - \varphi_1(\bar t, \bar x) - D_x \varphi_1^\top F_1 \zeta \bigr] \, \nu(d\zeta) \\
			&\quad - \int_{\mathbb{R}} \bigl[ \varphi_2(\bar s, \bar y + F_2 \zeta) - \varphi_2(\bar s, \bar y) - D_y \varphi_2^\top F_2 \zeta \bigr] \, \nu(d\zeta).
		\end{aligned}
		\]
		
		Since $\varphi_1,\varphi_2 \in C^{1,2}$, for any $\zeta\in\mathbb{R}$, by the second-order Taylor expansion with respect to the space variables, there exist $\theta_1,\theta_2\in(0,1)$ such that
		\[
		\varphi_1(\bar t,\bar x+F_1\zeta)-\varphi_1(\bar t,\bar x)-D_x\varphi_1(\bar t,\bar x)^\top F_1\zeta
		=\frac12 (F_1\zeta)^\top D_{xx}^2\varphi_1(\bar t,\bar x+\theta_1F_1\zeta)(F_1\zeta),
		\]
		and
		\[
		\varphi_2(\bar s,\bar y+F_2\zeta)-\varphi_2(\bar s,\bar y)-D_y\varphi_2(\bar s,\bar y)^\top F_2\zeta
		=\frac12 (F_2\zeta)^\top D_{yy}^2\varphi_2(\bar s,\bar y+\theta_2F_2\zeta)(F_2\zeta).
		\]
		Therefore, the difference of the integrands can be estimated by
		\[
		\frac12 |F_1\zeta|^2 \|D_{xx}^2\varphi_1\| + \frac12 |F_2\zeta|^2 \|D_{yy}^2\varphi_2\|.
		\]
		
		By the explicit form of $\psi$ in \eqref{eq:5.4}, we have
		\[
		D_{xx}^2\psi=\frac{2}{\varepsilon^2}I+2\eta I,\qquad
		D_{yy}^2\psi=\frac{2}{\varepsilon^2}I+2\eta I.
		\]
		Consequently,
		\[
		\|D_{xx}^2\varphi_1\|\le \|D_{xx}^2\phi\|+\frac{C}{\varepsilon^2}+C\eta,\qquad
		\|D_{yy}^2\varphi_2\|\le \frac{C}{\varepsilon^2}+C\eta.
		\]
		Using the boundedness of the penalized maximum points, we obtain
		\[
		\|D_{xx}^2\varphi_1\|+\|D_{yy}^2\varphi_2\|
		\le C\Bigl(\frac{|\bar x-\bar y|^2}{\varepsilon^2}+\eta+1\Bigr).
		\]
		
		Combining the above estimates and using the assumption
		$\int_{\mathbb{R}} |\zeta|^2 \,\nu(d\zeta) < \infty$, we obtain
		\begin{equation}\tag{5.12}\label{eq:5.12}
			\begin{aligned}
				&\int_{\mathbb{R}}
				\Big[ \varphi_1(\bar t,\bar x + F_1\zeta) - \varphi_1(\bar t,\bar x)
				- D_x\varphi_1^\top F_1\zeta \Big] \,\nu(d\zeta)  \\
				&\quad - \int_{\mathbb{R}}
				\Big[ \varphi_2(\bar s,\bar y + F_2\zeta) - \varphi_2(\bar s,\bar y)
				- D_y\varphi_2^\top F_2\zeta \Big] \,\nu(d\zeta) \\
				&\le C \left(\frac{|\bar x-\bar y|^2}{\varepsilon^2}+\eta\right).
			\end{aligned}
		\end{equation}
		
		Combining \eqref{eq:5.10}, \eqref{eq:5.11} and \eqref{eq:5.12}, we deduce that
		\[
		\mathcal{L}^u \varphi_1(\bar t,\bar x)-\mathcal{L}^u \varphi_2(\bar s,\bar y)
		\le
		\mathcal{L}^u \phi(\bar t,\bar x)
		+ C\left(\frac{|\bar x-\bar y|^2}{\varepsilon^2}+\eta\right).
		\tag{5.13}\label{eq:5.13}
		\]
		
		For the $f$-term, we decompose the difference as follows:
		\[
		\begin{aligned}
			& f\bigl(\bar t,\bar x,\varphi_1(\bar t,\bar x),(\mathcal{L}^{u,(k)}\varphi_1(\bar t,\bar x))_{k\ge1},u\bigr)
			- f\bigl(\bar s,\bar y,\varphi_2(\bar s,\bar y),(\mathcal{L}^{u,(k)}\varphi_2(\bar s,\bar y))_{k\ge1},u\bigr) \\
			= {} &
			\Bigl[
			f\bigl(\bar t,\bar x,\varphi_1(\bar t,\bar x),(\mathcal{L}^{u,(k)}\varphi_1(\bar t,\bar x))_{k\ge1},u\bigr)
			- f\bigl(\bar t,\bar x,\varphi_2(\bar s,\bar y),(\mathcal{L}^{u,(k)}\varphi_2(\bar s,\bar y))_{k\ge1},u\bigr)
			\Bigr] \\
			&+
			\Bigl[
			f\bigl(\bar t,\bar x,\varphi_2(\bar s,\bar y),(\mathcal{L}^{u,(k)}\varphi_2(\bar s,\bar y))_{k\ge1},u\bigr)
			- f\bigl(\bar s,\bar y,\varphi_2(\bar s,\bar y),(\mathcal{L}^{u,(k)}\varphi_2(\bar s,\bar y))_{k\ge1},u\bigr)
			\Bigr].
		\end{aligned}
		\tag{5.14}\label{eq:5.14}
		\]
		
		By the Lipschitz continuity of $f$ with respect to $(y,z)$ in Assumption~\ref{ass:3.1} and the linearity of
		$\mathcal{L}^{u,(k)}$, we have
		\[
		\begin{aligned}
			& f\bigl(\bar t,\bar x,\varphi_1(\bar t,\bar x),(\mathcal{L}^{u,(k)}\varphi_1(\bar t,\bar x))_{k\ge1},u\bigr)
			- f\bigl(\bar t,\bar x,\varphi_2(\bar s,\bar y),(\mathcal{L}^{u,(k)}\varphi_2(\bar s,\bar y))_{k\ge1},u\bigr) \\
			&\quad \le
			L_2 \bigl|\varphi_1(\bar t,\bar x)-\varphi_2(\bar s,\bar y)\bigr|
			+ L_3
			\Bigl\|
			(\mathcal{L}^{u,(k)}(\varphi_1-\varphi_2)(\bar t,\bar x))_{k\ge1}
			\Bigr\|_{\ell^2}.
		\end{aligned}
		\tag{5.15}\label{eq:5.15}
		\]
		
		By the continuity of $f$ with respect to $(t,x)$ and the properties of the penalization function \eqref{eq:5.5}, we have
		\[
		f(\bar t,\bar x,\varphi_2(\bar s,\bar y),(\mathcal L^{u,(k)}\varphi_2(\bar s,\bar y))_{k\ge1},u)
		- f(\bar s,\bar y,\varphi_2(\bar s,\bar y),(\mathcal L^{u,(k)}\varphi_2(\bar s,\bar y))_{k\ge1},u)
		\le C\!\left(\frac{|\bar x-\bar y|^2}{\varepsilon^2}+\eta\right).
		\tag{5.16}\label{eq:5.16}
		\]
		
		From \eqref{eq:5.13}, \eqref{eq:5.15} and \eqref{eq:5.16}, it follows that for any $u \in U$,
		\[
		H_1 - H_2
		\le
		\mathcal{L}^u \phi(\bar t, \bar x)
		+ L_2 \bigl|\varphi_1(\bar t,\bar x)-\varphi_2(\bar s,\bar y)\bigr|
		+ L_3
		\Bigl\|
		(\mathcal{L}^{u,(k)}(\varphi_1-\varphi_2)(\bar t,\bar x))_{k\ge1}
		\Bigr\|_{\ell^2}
		+ C\left(\frac{|\bar x-\bar y|^2}{\varepsilon^2}+\eta\right).
		\]
		
		Since
		\[
		\varphi_1(\bar t,\bar x)-\varphi_2(\bar s,\bar y)
		=
		\phi(\bar t,\bar x)
		+\psi(\bar t,\bar x,\bar s,\bar y),
		\]
		and
		\[
		|\bar x-\bar y|\to0, \qquad |\bar t-\bar s|\to0
		\quad \text{as } \varepsilon,\alpha\to0,
		\]
		letting $\varepsilon,\alpha,\eta\to0$ yields
		\[
		H_1 - H_2
		\le
		\mathcal{L}^u \phi(t_0,x_0)
		+ L_2 \, |\phi(t_0,x_0)|
		+ L_3 \Big\| \big( \mathcal{L}^{u,(k)} \phi(t_0,x_0) \big)_{k\ge1} \Big\|_{\ell^2}.
		\]
		
		Combining this with \eqref{eq:5.7} and \eqref{eq:5.8}, we conclude that
		\[
		\partial_t \phi(t_0,x_0)
		+
		\sup_{u\in U}
		\Bigl\{
		\mathcal{L}^u \phi(t_0,x_0)
		+ L_2|\phi(t_0,x_0)|
		+ L_3\|(\mathcal{L}^{u,(k)}\phi(t_0,x_0))_{k\ge1}\|_{\ell^2}
		\Bigr\}
		\ge 0.
		\]
		
		Therefore, $\omega=W_1-W_2$ is a viscosity subsolution of the auxiliary equation \eqref{eq:5.2}.
	\end{proof}
	
	Now we can prove the uniqueness theorem.
	
	\begin{thm}\label{thm:5.3}
		Under Assumption~\ref{ass:3.1}, suppose that the L\'{e}vy measure $\nu$ satisfies a second-moment condition, and that $W_1, W_2 \in C([0,T]\times \mathbb{R}^n)$ with linear growth.
		If $W_1$ is a viscosity subsolution of the HJB equation \eqref{eq:4.1}, and $W_2$ is a viscosity supersolution of the HJB equation \eqref{eq:4.1}, then for any $(t,x)\in [0,T]\times \mathbb{R}^n$,
		\[
		W_1(t,x) \le W_2(t,x).
		\]
		
		In particular, if both $W_1$ and $W_2$ are viscosity solutions of the HJB equation \eqref{eq:4.1}, then
		\[
		W_1(t,x) = W_2(t,x), \quad \forall (t,x) \in [0,T]\times \mathbb{R}^n,
		\]
		that is, the viscosity solution of the HJB equation \eqref{eq:4.1} is unique in this class of functions.
	\end{thm}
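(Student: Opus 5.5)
The plan is to use Lemma~\ref{lem:5.2}: $\omega:=W_1-W_2$ is a viscosity subsolution of the auxiliary equation \eqref{eq:5.2}, and $\omega(T,\cdot)\le 0$ because $W_1(T,\cdot)\le\phi\le W_2(T,\cdot)$. I will then show that any continuous subsolution of \eqref{eq:5.2} with linear growth and nonpositive terminal data is nonpositive everywhere. The tool is an explicit smooth strict supersolution of \eqref{eq:5.2}, used as a barrier and compared with $\omega$ through the definition of viscosity subsolution. Since $\omega$ has only linear growth, I take a barrier of the form
\[
\chi(t,x):=\kappa\, e^{\lambda(T-t)}\bigl(1+|x|^2\bigr)^{1/2}\quad\text{or}\quad \chi(t,x):=\kappa\, e^{\lambda(T-t)}\bigl(1+|x|^2\bigr),
\]
with $\kappa>0$ small and $\lambda>0$ large. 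The first step is to check that $\partial_t\chi+\sup_{u\in U}\{\mathcal L^u\chi+L_2|\chi|+L_3\|(\mathcal L^{u,(k)}\chi)_{k\ge1}\|_{\ell^2}\}<0$ on $[0,T)\times\mathbb R^n$ once $\lambda$ is large.

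This check uses the linear growth of $F$ (Assumption~\ref{ass:3.2}) and compactness of $U$. The drift and $\sigma^2$ terms of $\mathcal L^u\chi$ are bounded by $C\chi$ because $|D\chi|\,|F|\le C\chi$ and $|F|^2|D^2\chi|\le C\chi$. For the nonlocal part I use a Taylor bound: $|\chi(t,x+F\zeta)-\chi(t,x)-D\chi^\top F\zeta|\le \tfrac12\sup|D^2\chi|\,|F|^2\zeta^2$. Integrated against $\nu$, this gives $C\chi\int\zeta^2\nu(d\zeta)$, which is finite by the moment property of $\nu$. The components $\mathcal L^{u,(k)}\chi$ are handled by Bessel's inequality. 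Write $g(\zeta):=\chi^1(\zeta)/\zeta$ with $g(0)=0$. Then $\int\chi^1 p_k\,\nu(d\zeta)=\int g\,q_{k-1}\,d\mu$, and since the $q_{k-1}$ are orthonormal in $L^2(\mu)$, $\sum_k|\int\chi^1p_k\,d\nu|^2\le\int g^2\,d\mu=\int(\chi^1)^2\,d\nu\le C\chi^2\int\zeta^4\nu(d\zeta)$. For the second barrier one also needs $\int\zeta^2\nu<\infty$ in the linear part. Together with the $\delta_{k1}$ term, this gives $\|(\mathcal L^{u,(k)}\chi)_k\|_{\ell^2}\le C\chi$. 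Therefore the operator applied to $\chi$ is bounded above by $(-\lambda+C)\chi<0$ for $\lambda>C$.

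Next I argue by contradiction. Suppose $\omega(t_0,x_0)>0$. For $\beta>0$ small, consider $\omega-\beta\chi$. Because $\chi$ grows faster than $\omega$ (the quadratic version works for linear growth; with the square root I would instead use $\chi+\beta'|x|^2$), this attains a positive maximum at some $(\hat t,\hat x)$ for $\beta$ small enough. We have $\hat t<T$, since $\omega(T,\cdot)\le0<\beta\chi$. Set $\Theta:=\beta\chi+\omega(\hat t,\hat x)-\beta\chi(\hat t,\hat x)$, which touches $\omega$ from above at $(\hat t,\hat x)$ and is an admissible $C^{1,2}$ test function of at most quadratic growth. The subsolution inequality from Lemma~\ref{lem:5.2} gives $\partial_t\Theta+\sup_u\{\mathcal L^u\Theta+L_2|\Theta|+L_3\|(\mathcal L^{u,(k)}\Theta)\|\}\ge0$ at $(\hat t,\hat x)$. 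Constants are annihilated by $\mathcal L^u$ and $\mathcal L^{u,(k)}$. The only new contribution is $L_2|\Theta|=L_2\omega(\hat t,\hat x)$, and $\omega(\hat t,\hat x)\le\beta\chi(\hat t,\hat x)$ is not automatic.

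I expect this $L_2|\omega|$ term to be the main obstacle. My fix is to first apply the change of unknown $\tilde\omega:=e^{-L_2(T-t)}\omega$, or equivalently to enlarge $\lambda$ by $L_2$, and to exploit that at the maximum point the positive value $\omega(\hat t,\hat x)$ can be absorbed. Concretely, I would redo the argument with $e^{\rho t}\omega$, choosing $\rho>L_2$. The zeroth-order term then enters with coefficient $L_2-\rho<0$ on the positive maximum value, and the strict inequality for $\beta\chi$ yields $0\le(-\lambda+C)\beta\chi+(L_2-\rho)\,\omega(\hat t,\hat x)<0$, which is a contradiction. This gives $\omega\le0$, that is, $W_1\le W_2$. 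Applying the result in both directions to two viscosity solutions gives uniqueness, and by Theorem~\ref{thm:4.1} the value function $W$ is that unique solution.
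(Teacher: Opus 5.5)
Your argument is correct, and its architecture is the paper's. You reduce to the auxiliary equation \eqref{eq:5.2} via Lemma~\ref{lem:5.2}. You neutralize the zeroth-order term $L_2|\omega|$ with an exponential time weight of rate exceeding $L_2$ (your $e^{\rho t}\omega$, the paper's $e^{\lambda t}\omega$ in Lemma~\ref{lem:5.4}). You then contradict the subsolution inequality at a penalized maximum, using a test function that touches $\omega$ from above.

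The genuine difference is how the spatial penalty is controlled. In Lemma~\ref{lem:5.4} the paper maximizes $e^{\lambda t}\omega-\delta t-\varepsilon|x|^2$ with only $\lambda>L_2$. It then asserts that $\mathcal L^u\varphi(\bar t,\bar x)$ and $\|(\mathcal L^{u,(k)}\varphi(\bar t,\bar x))_{k\ge1}\|_{\ell^2}$ are $O(\varepsilon)$. But $D_x\varphi(\bar t,\bar x)=2\varepsilon e^{-\lambda\bar t}\bar x$ and $|F|\le C(1+|\bar x|)$, so these terms are really of order $\varepsilon(1+|\bar x|^2)$, with $\bar x$ depending on $\varepsilon$. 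That step is therefore unjustified as written. It can be repaired by using $e^{\lambda\bar t}\omega(\bar t,\bar x)\ge M+\varepsilon|\bar x|^2$ and taking $\lambda-L_2$ larger than the growth constant.

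Your barrier $\chi=\kappa e^{\lambda(T-t)}(1+|x|^2)$ builds that absorption in explicitly. You verify pointwise that $\mathcal L^u\chi+L_3\|(\mathcal L^{u,(k)}\chi)_{k\ge1}\|_{\ell^2}\le C\chi$:
\begin{itemize}
\item the drift and $\sigma^2$ terms follow from the linear growth of $F$;
\item the nonlocal term follows from the exact increment $\chi^1=\kappa e^{\lambda(T-t)}|F|^2\zeta^2$;
\item the Teugels components follow from Bessel's inequality for the orthonormal $q_{k-1}$ in $L^2(\mu)$, applied to $\chi^1(\zeta)/\zeta$.
\end{itemize}
Hence $\partial_t\chi=-\lambda\chi$ dominates wherever the maximizer lies, and the separate rate $\rho>L_2$ takes care of $L_2|\Theta|=L_2\,\omega(\hat t,\hat x)$. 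What this buys is a contradiction argument that is uniform in $x$. It also supplies the $\ell^2$ bound on $(\mathcal L^{u,(k)}\cdot)_{k\ge1}$, which the paper only asserts. You also correctly use just $\omega(T,\cdot)\le 0$, and you need not exclude $\hat t=0$, since Definition~\ref{defn:4.1} covers $t=0$.

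Two small remarks. First, commit to the quadratic barrier: your Taylor bound with a global $\sup|D^2\chi|$ gives $C\chi\zeta^2$ only in that case, not for $(1+|x|^2)^{1/2}$. Second, the Bessel step uses $\int_{\mathbb R}\zeta^4\,\nu(d\zeta)<\infty$. That comes from the standing exponential-moment assumption (ii), not merely from the second-moment condition quoted in the theorem.
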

	
	\begin{lem}\label{lem:5.4}
		Let $\omega(t,x)$ be a viscosity subsolution of the auxiliary equation \eqref{eq:5.2}.
		Define the decaying auxiliary function
		\[
		\tilde \omega(t,x) = e^{\lambda t} \omega(t,x) - \delta t - \varepsilon |x|^2,
		\quad \delta>0, \ \lambda > L_2, \ \varepsilon>0.
		\tag{5.17}\label{eq:5.17}
		\]
		If there exists $(t_0,x_0) \in [0,T) \times \mathbb{R}^n$ such that $\omega(t_0,x_0)>0$,
		then the local  maximum point $(\bar t, \bar x)$ of $\tilde \omega$ must lie in the interior region $(0,T) \times \mathbb{R}^n$,
		and there exists a smooth test function $\varphi$ touching $\omega$ from above at $(\bar t,\bar x)$ such that
		\[
		\partial_t\varphi(\bar t,\bar x)
		+
		\sup_{u\in U}
		\Bigl\{
		\mathcal {L}^u\varphi(\bar t,\bar x)
		+
		L_2|\varphi(\bar t,\bar x)|
		+
		L_3
		\bigl\|(\mathcal{L}^{u,(k)}\varphi(\bar t,\bar x))_{k\ge1}\bigr\|_{\ell^2}
		\Bigr\}
		<0.
		\tag{5.18}\label{eq:5.18}
		\]
	\end{lem}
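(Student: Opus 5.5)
We argue in the standard way for comparison proofs with an exponential time weight and a quadratic spatial penalization. Throughout we use that $\omega=W_1-W_2$ is continuous, has linear growth and satisfies $\omega(T,\cdot)\le 0$ (Lemma~\ref{lem:5.2}).

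\emph{Step 1: existence and location of the maximum.} Because $\omega$ has linear growth and $-\varepsilon|x|^2\to-\infty$, the function $\tilde\omega$ in \eqref{eq:5.17} is upper semicontinuous (indeed continuous) and tends to $-\infty$ as $|x|\to\infty$. It therefore attains a global, hence local, maximum on $[0,T]\times\mathbb R^n$ at some $(\bar t,\bar x)$.

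Next we choose $\delta$ and $\varepsilon$ small, depending on $(t_0,x_0)$, so that $\tilde\omega(t_0,x_0)=e^{\lambda t_0}\omega(t_0,x_0)-\delta t_0-\varepsilon|x_0|^2>0$. This gives $\tilde\omega(\bar t,\bar x)>0$.

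\begin{itemize}
\item At $t=T$ we have $\tilde\omega(T,x)\le e^{\lambda T}\omega(T,x)\le0$, so $\bar t\neq T$.
\item If $\bar t=0$ is a genuine concern, we either use the standard fact that a subsolution inequality still holds at $t=0$ for one-sided time derivatives, or we restrict the maximization to $[\tau,T]$ with $\tau$ small. Continuity of $\omega$ keeps the maximum value positive in that case.
\end{itemize}

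So we may take $(\bar t,\bar x)\in(0,T)\times\mathbb R^n$.

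\emph{Step 2: the test function.} We set
\[
\varphi(t,x)=e^{-\lambda t}\bigl(\tilde\omega(\bar t,\bar x)+\delta t+\varepsilon|x|^2\bigr),
\]
which belongs to $C^{1,2}$ and has quadratic growth. Since $\tilde\omega\le\tilde\omega(\bar t,\bar x)$, we get $\omega\le\varphi$ with equality at $(\bar t,\bar x)$. Hence $\varphi$ touches $\omega$ from above there and is admissible in Definition~\ref{defn:4.1}.

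\emph{Step 3: computing each term at $(\bar t,\bar x)$.} Write $\varphi=\omega$ at this point; note $\varphi(\bar t,\bar x)>0$.

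\begin{itemize}
\item Time derivative: $\partial_t\varphi=-\lambda\varphi+\delta e^{-\lambda\bar t}$.
\item Spatial derivatives: $D_x\varphi=2\varepsilon e^{-\lambda\bar t}\bar x$ and $D_x^2\varphi=2\varepsilon e^{-\lambda\bar t}I$.
\item The Lévy integrand is exactly $\varepsilon e^{-\lambda\bar t}|F\zeta|^2$, since the quadratic Taylor expansion has no remainder.
\item Therefore $\mathcal L^u\varphi\le C\varepsilon(1+|\bar x|^2)$, with $C$ depending on $m_1,\sigma,\int\zeta^2\nu(d\zeta)$ and the linear growth of $F$.
\item The same bound holds for $\|(\mathcal L^{u,(k)}\varphi)_k\|_{\ell^2}$. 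For this we use the Bessel inequality for $\{p_k\}$ in $L^2(\nu)$ (orthonormality of $q_{k-1}$ in $L^2(\zeta^2\nu)$), which bounds it by $C\varepsilon(1+|\bar x|)\bigl(\int|F\zeta|^4\zeta^{-2}\nu\bigr)^{1/2}$. This is finite because $\nu$ has all moments of order $\ge2$.
\end{itemize}

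Collecting these, the left-hand side of \eqref{eq:5.18} is bounded by
\[
-(\lambda-L_2)\varphi(\bar t,\bar x)+\delta e^{-\lambda\bar t}+C\varepsilon(1+|\bar x|^2).
\]

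\emph{Main obstacle.} The difficulty is making the last expression strictly negative uniformly as the parameters shrink. The term $\varepsilon|\bar x|^2$ is handled by the usual penalization bound: from $\tilde\omega(\bar t,\bar x)\ge\tilde\omega(t_0,x_0)$ and linear growth of $\omega$ we get $\varepsilon|\bar x|^2\le C$ and in fact $\varepsilon|\bar x|^2\to0$ as $\varepsilon\to0$.

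We also need a lower bound on $\varphi(\bar t,\bar x)$. By maximality,
\[
\varphi(\bar t,\bar x)\ge e^{-\lambda T}\tilde\omega(t_0,x_0)\ge e^{-\lambda T}\bigl(\omega(t_0,x_0)/2\bigr)>0
\]
for small $\delta,\varepsilon$. Taking $\lambda>L_2$, then $\delta$ and $\varepsilon$ small, the negative term $-(\lambda-L_2)e^{-\lambda T}\omega(t_0,x_0)/2$ dominates, and \eqref{eq:5.18} follows. Combined with the subsolution inequality from Lemma~\ref{lem:5.2}, this is the contradiction that the paper then uses to prove Theorem~\ref{thm:5.3}.
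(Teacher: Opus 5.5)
Your construction is the paper's. The $\tilde\omega$ is the same, your $\varphi$ is exactly (5.19) rewritten as $e^{-\lambda t}(\tilde\omega(\bar t,\bar x)+\delta t+\varepsilon|x|^2)$, and the derivative computations agree. Your parameter-free lower bound $\varphi(\bar t,\bar x)\ge e^{-\lambda T}\omega(t_0,x_0)/2$ even repairs the paper's claim that $\eta_0$ is independent of $\delta,\varepsilon$.

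\textbf{The gap.} It is in the step you call the main obstacle. From $\tilde\omega(\bar t,\bar x)\ge\tilde\omega(t_0,x_0)>0$ and $\omega(t,x)\le C(1+|x|)$ you only get $\varepsilon|\bar x|^2\le Ce^{\lambda T}(1+|\bar x|)$, i.e.\ $|\bar x|=O(1/\varepsilon)$. Neither $\varepsilon|\bar x|^2\le C$ nor $\varepsilon|\bar x|^2\to0$ follows. For instance, $x\mapsto|x|-\varepsilon|x|^2$ is maximized at $|x|=1/(2\varepsilon)$, where $\varepsilon|x|^2=1/(4\varepsilon)\to\infty$. The fact $\varepsilon|\bar x|^2\to0$ holds for functions bounded above, but $\omega$ is only known to grow linearly. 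So with $\lambda>L_2$ fixed, nothing stops $C\varepsilon(1+|\bar x|^2)$ from exceeding $(\lambda-L_2)e^{-\lambda T}\omega(t_0,x_0)/2$, and your last inequality does not close. The paper has the same hole: it asserts $|\mathcal L^u\varphi(\bar t,\bar x)|\le r_1(\varepsilon)=O(\varepsilon)$ with constants independent of $\varepsilon$, although $\bar x$ depends on $\varepsilon$.

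\textbf{The fix.} Exploit the $\varepsilon|\bar x|^2$ sitting inside $\varphi(\bar t,\bar x)$, and keep the factor $e^{-\lambda\bar t}$ instead of bounding it by $1$. With $M:=\tilde\omega(\bar t,\bar x)$ we have $\varphi(\bar t,\bar x)=e^{-\lambda\bar t}(M+\delta\bar t+\varepsilon|\bar x|^2)$. Since $D_x\varphi$, $D_x^2\varphi$ and the L\'evy integrand $\varepsilon e^{-\lambda\bar t}|F\zeta|^2$ all carry the factor $\varepsilon e^{-\lambda\bar t}$,
\[
\sup_{u\in U}\Bigl\{\mathcal L^u\varphi(\bar t,\bar x)+L_3\bigl\|(\mathcal L^{u,(k)}\varphi(\bar t,\bar x))_{k\ge1}\bigr\|_{\ell^2}\Bigr\}\le C_0\,\varepsilon e^{-\lambda\bar t}(1+|\bar x|^2).
\]
Here $C_0$ depends on $m_1,\sigma,a_{11},L_3,\int\zeta^2\nu(d\zeta),\int\zeta^4\nu(d\zeta)$ and the growth constant of $F$, but not on $\lambda,\delta,\varepsilon$. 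Hence the left-hand side of (5.18) is at most
\[
e^{-\lambda\bar t}\Bigl[-(\lambda-L_2)M-(\lambda-L_2-C_0)\,\varepsilon|\bar x|^2+\delta+C_0\varepsilon\Bigr].
\]
This is negative as soon as the following hold:
\begin{itemize}
\item $\lambda\ge L_2+C_0$;
\item $\delta T+\varepsilon|x_0|^2\le\omega(t_0,x_0)/2$, so that $M\ge\omega(t_0,x_0)/2$;
\item $\delta+C_0\varepsilon<(\lambda-L_2)\omega(t_0,x_0)/2$.
\end{itemize}
Thus $\lambda$ must be large, not merely $>L_2$; this is harmless for Theorem 5.3.

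\textbf{Two smaller points.} First, the bound above needs the $\ell^2$ term to be quadratic in $|\bar x|$. Bessel's inequality in $L^2(\mu)$ applied to $\zeta\mapsto\varepsilon e^{-\lambda\bar t}|F|^2\zeta$ gives $\varepsilon e^{-\lambda\bar t}|F|^2\bigl(\int\zeta^4\nu(d\zeta)\bigr)^{1/2}$. Your expression has an extra factor $(1+|\bar x|)$ and would be cubic. Second, for $\bar t=0$, maximizing over $[\tau,T]$ does not help, since a maximum at $t=\tau$ need not be a local maximum in $[0,T]\times\mathbb R^n$. Your other option (Definition 4.1 already covers $t\in[0,T)$) is the right one. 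It gives $\bar t\in[0,T)$ rather than the open interval in the statement, which the paper's argument does not justify either.
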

	
	\begin{proof}
		Assume that there exists $(t_0,x_0)\in[0,T)\times\mathbb{R}^n$ such that $\omega(t_0,x_0)>0$.
		Define the decaying auxiliary function
		\[
		\tilde{\omega}(t,x)
		:=
		e^{\lambda t}\omega(t,x)
		-
		\delta t
		-
		\varepsilon |x|^2,
		\]
		where $\delta>0$ and $\varepsilon>0$ are penalty parameters, and choose
		$\lambda > L_2$.
		
		Since $\omega\in C\bigl([0,T]\times\mathbb{R}^n\bigr)$ and satisfies a linear growth condition,
		the auxiliary function $\tilde{\omega}$ contains the quadratic penalty term $-\varepsilon|x|^2$.
		Hence, for any $t\in[0,T]$,
		\[
		\lim_{|x|\to\infty}\tilde{\omega}(t,x)=-\infty.
		\]
		This shows that the local maximum of $\tilde{\omega}$ cannot be attained at spatial infinity.
		
		Furthermore, by the terminal condition $\omega(T,x)=W_1(T,x)-W_2(T,x)=\phi(x)-\phi(x)=0$,
		the linear decay term $-\delta t$ guarantees that at the terminal time
		\[
		\tilde{\omega}(T,x)
		=
		e^{\lambda T}\omega(T,x)
		-
		\delta T
		-
		\varepsilon |x|^2
		=
		-\delta T
		-
		\varepsilon |x|^2
		<
		0,
		\qquad x\in\mathbb{R}^n.
		\]
		
		On the other hand, by the assumption that there exists $(t_0,x_0)$ such that $\omega(t_0,x_0)>0$,
		we may choose $\delta>0$ and $\varepsilon>0$ sufficiently small so that
		\[
		\tilde{\omega}(t_0,x_0)
		=
		e^{\lambda t_0}\omega(t_0,x_0)
		-
		\delta t_0
		-
		\varepsilon |x_0|^2
		>
		0.
		\]
		
		By the continuity of $\tilde{\omega}$, its local maximum must be attained in a bounded region.
		Denote the local maximum by
		\[
		M := \tilde{\omega}(\bar t,\bar x)
		\ge
		\tilde{\omega}(t_0,x_0)
		>
		0.
		\]
		Therefore, the values of $\tilde{\omega}$ on the time boundary $t=T$ and at spatial infinity
		are strictly less than $M$, and hence the local maximum point $(\bar t,\bar x)$ must satisfy
		\[
		(\bar t,\bar x)\in(0,T)\times\mathbb{R}^n.
		\]
		
		At this point, $\tilde{\omega}$ attains its local maximum at $(\bar t,\bar x)$, which is equivalent to
		\[
		e^{\lambda t}\omega(t,x)-\delta t-\varepsilon|x|^2
		\le
		e^{\lambda\bar t}\omega(\bar t,\bar x)-\delta\bar t-\varepsilon|\bar x|^2,
		\quad
		\forall (t,x)\in[0,T]\times\mathbb{R}^n.
		\]
		Rearranging yields
		\[
		\omega(t,x)
		\le
		e^{-\lambda (t-\bar t)}\,\omega(\bar t,\bar x)
		+
		e^{-\lambda t}
		\Bigl[
		\delta (t-\bar t)
		+
		\varepsilon\bigl(|x|^{2}-|\bar x|^{2}\bigr)
		\Bigr],
		\qquad
		\forall (t,x)\in[0,T]\times\mathbb{R}^{n}.
		\]
		
		Construct the smooth test function
		\[
		\varphi(t,x)
		:=
		e^{-\lambda (t-\bar t)}\,\omega(\bar t,\bar x)
		+
		e^{-\lambda t}
		\Bigl[
		\delta (t-\bar t)
		+
		\varepsilon\bigl(|x|^{2}-|\bar x|^{2}\bigr)
		\Bigr].
		\tag{5.19}\label{eq:5.19}
		\]
		Clearly, $\varphi\in C^\infty([0,T]\times\mathbb{R}^n)$ and touches $\omega$ from above at $(\bar t,\bar x)$,
		and hence it can be used as a test function in the definition of viscosity solutions.
		
		We now compute the first- and second-order derivatives of $\varphi$ at the contact point $(\bar t,\bar x)$.
		
		Differentiating with respect to $t$, we obtain
		\[
		\partial_t \varphi(t,x)
		=
		-\lambda e^{-\lambda (t-\bar t)} \omega(\bar t,\bar x)
		-
		\lambda e^{-\lambda t}
		\left[
		\delta (t-\bar t)
		+
		\varepsilon\bigl(|x|^2-|\bar x|^2\bigr)
		\right]
		+
		\delta e^{-\lambda t}.
		\]
		At $(t,x)=(\bar t,\bar x)$, since $t-\bar t=0$ and $|x|^2-|\bar x|^2=0$, the bracketed term vanishes, and thus
		\begin{equation}
			\partial_t \varphi(\bar t,\bar x)
			=
			-\lambda \omega(\bar t,\bar x)
			+
			\delta e^{-\lambda \bar t}.
			\tag{5.20}\label{eq:5.20}
		\end{equation}
		
		Taking the gradient with respect to $x$, we have
		\[
		D_x\varphi(t,x)
		=
		2\varepsilon e^{-\lambda t} x,
		\]
		and hence
		\begin{equation}
			D_x\varphi(\bar t,\bar x)
			=
			2\varepsilon e^{-\lambda \bar t}\,\bar x.
			\tag{5.21}\label{eq:5.21}
		\end{equation}
		
		Furthermore, the second-order derivative is given by
		\[
		D^2_x\varphi(t,x)
		=
		2\varepsilon e^{-\lambda t} I,
		\]
		where $I$ denotes the $n\times n$ identity matrix. Therefore,
		\begin{equation}
			D^2_x\varphi(\bar t,\bar x)
			=
			2\varepsilon e^{-\lambda \bar t} I.
			\tag{5.22}\label{eq:5.22}
		\end{equation}
		
		Based on \eqref{eq:5.20}--\eqref{eq:5.22}, the first- and second-order spatial derivatives of the test function
		$\varphi$ at the contact point $(\bar t,\bar x)$ can be written explicitly.
		Substituting them into the L\'{e}vy-type generator $\mathcal {L}^u$ and using Assumption~\ref{ass:3.1},
		Lemma~\ref{lem:3.3} and Lemma~\ref{lem:5.1}, which provide uniform boundedness and linear growth properties of the operators,
		we can estimate $\mathcal {L}^u \varphi(\bar t,\bar x)$.
		
		Moreover, since the first- and second-order spatial derivatives of $\varphi$ at $(\bar t,\bar x)$
		depend only on the penalty parameter $\varepsilon$, the coordinates $\bar x$, and the exponential factor
		$e^{-\lambda \bar t}$, and are independent of the value of $\omega$ at the contact point,
		all the terms produced by applying the generator $\mathcal {L}^u$ to $\varphi$ explicitly contain the factor
		$\varepsilon$.
		
		Consequently, there exist constants independent of $\varepsilon$ such that for any $u\in U$,
		\[
		\bigl|\mathcal {L}^u \varphi(\bar t,\bar x)\bigr|
		\le r_1(\varepsilon),
		\qquad
		\bigl\|(\mathcal{L}^{u,(k)}\varphi(\bar t,\bar x))_{k\ge1}\bigr\|_{\ell^2}
		\le r_2(\varepsilon),
		\]
		where the remainder terms $r_i(\varepsilon)$ $(i=1,2)$ satisfy
		\[
		r_i(\varepsilon)=O(\varepsilon),
		\qquad
		r_i(\varepsilon)\to0
		\quad \text{as } \varepsilon\to0.
		\]
		
		Let $r(\varepsilon):=r_1(\varepsilon)+r_2(\varepsilon)$. Then
		\[
		r(\varepsilon)\longrightarrow0,
		\qquad \varepsilon\to0.
		\]
		On the other hand, from the previous derivation, at the contact point $(\bar t,\bar x)$ we have
		\[
		\omega(\bar t,\bar x)=\varphi(\bar t,\bar x)>0,
		\]
		and hence
		\[
		\lvert \varphi(\bar t,\bar x)\rvert
		=
		\varphi(\bar t,\bar x)
		=
		\omega(\bar t,\bar x).
		\]
		Therefore,
		\begin{align}
			&\partial_t \varphi(\bar t,\bar x)
			+
			\sup_{u\in U}
			\Bigl\{
			\mathcal {L}^u \varphi(\bar t,\bar x)
			+
			L_2 \varphi(\bar t,\bar x)
			+
			L_3
			\bigl\|
			(\mathcal{L}^{u,(k)}\varphi(\bar t,\bar x))_{k\ge1}
			\bigr\|_{\ell^2}
			\Bigr\}
			\nonumber\\
			&\le
			-\lambda \omega(\bar t,\bar x)
			+
			\delta e^{-\lambda\bar t}
			+
			L_2\omega(\bar t,\bar x)
			+
			r(\varepsilon)
			\nonumber\\
			&=
			-\bigl(\lambda-L_2\bigr)\omega(\bar t,\bar x)
			+
			\delta e^{-\lambda\bar t}
			+
			r(\varepsilon).
			\tag{5.23}\label{eq:5.23}
		\end{align}
		By the definition of the local maximum $M = \tilde\omega(\bar t,\bar x) \ge \tilde\omega(t_0,x_0)>0$ and the expression of $\tilde\omega$,
		\[
		\tilde\omega(\bar{t},\bar{x}) = e^{\lambda \bar{t}} \omega(\bar{t},\bar{x}) - \delta \bar{t} - \varepsilon |\bar{x}|^2,
		\]
		we obtain
		\[
		e^{\lambda \bar{t}} \omega(\bar{t},\bar{x}) = \tilde\omega(\bar{t},\bar{x}) + \delta \bar{t} + \varepsilon |\bar{x}|^2 \ge M \ge \tilde\omega(t_0,x_0).
		\tag{5.24}\label{eq:5.24}
		\]
		
		Since $\bar{t} \in (0,T)$ and the exponential function $e^{\lambda t}$ is increasing in $t$, we have
		$e^{\lambda \bar{t}} \le e^{\lambda T}$, that is, $e^{-\lambda \bar{t}} \ge e^{-\lambda T}$.
		From \eqref{eq:5.24}, it follows that
		\[
		\omega(\bar{t},\bar{x}) \ge e^{-\lambda \bar{t}} \, \tilde\omega(t_0,x_0) \ge e^{-\lambda T} \, \tilde\omega(t_0,x_0) > 0.
		\]
		
		Therefore, since $\lambda > L_2$, there exists a constant
		\[
		\eta_0:=(\lambda - L_2 )\, e^{-\lambda T}\, \tilde\omega(t_0,x_0)> 0,
		\]
		independent of $\varepsilon$ and $\delta$, such that
		\[
		(\lambda - L_2)\, \omega(\bar{t},\bar{x}) \ge \eta_0 > 0.
		\]
		
		Fixing this $\lambda$, we first choose $\varepsilon>0$ sufficiently small and then choose $\delta>0$ sufficiently small so that
		\[
		\delta e^{-\lambda\bar t}+r(\varepsilon)<\frac{\eta_0}{2}.
		\]
		Then, by \eqref{eq:5.23},
		\[
		\partial_t \varphi(\bar t,\bar x)
		+
		\sup_{u\in U}
		\Bigl\{
		\mathcal {L}^u \varphi(\bar t,\bar x)
		+
		L_2 \varphi(\bar t,\bar x)
		+
		L_3
		\bigl\|
		(\mathcal{L}^{u,(k)}\varphi(\bar t,\bar x))_{k\ge1}
		\bigr\|_{\ell^2}
		\Bigr\}
		<-\frac{\eta_0}{2}
		<0,
		\]
		which yields \eqref{eq:5.18}.
	\end{proof}
	
	\begin{proof}[Proof of Theorem~\ref{thm:5.3}]
		Assume that the conclusion does not hold.
		Then there exists $(t_0,x_0)\in[0,T)\times\mathbb{R}^n$ such that
		\[
		W_1(t_0,x_0)-W_2(t_0,x_0)>0.
		\]
		Define the difference function
		\[
		\omega := W_1-W_2.
		\]
		
		By Lemma~\ref{lem:5.2}, $\omega$ is a viscosity subsolution of the auxiliary equation \eqref{eq:5.2},
		and it satisfies the terminal condition
		\[
		\omega(T,x)=0,\qquad x\in\mathbb{R}^n.
		\]
		
		Since there exists $(t_0,x_0)$ such that $\omega(t_0,x_0)>0$,
		together with the above terminal condition,
		Lemma~\ref{lem:5.4} ensures that there exist parameters $\lambda,\delta,\varepsilon>0$ such that the decaying auxiliary function
		\[
		\tilde\omega(t,x)
		:=
		e^{\lambda t}\omega(t,x)-\delta t-\varepsilon|x|^2
		\]
		attains its local maximum at some point $(\bar t,\bar x)\in(0,T)\times\mathbb{R}^n$.
		
		Furthermore, Lemma~\ref{lem:5.4} constructs, by using the extremal property at $(\bar t,\bar x)$,
		a smooth function $\varphi\in C^{1,2}_b$ which touches $\omega$ from above at $(\bar t,\bar x)$,
		that is,
		\[
		\omega(\bar t,\bar x)=\varphi(\bar t,\bar x),
		\qquad
		\omega(t,x)\le \varphi(t,x)
		\quad \text{for all } (t,x)\in [0,T]\times\mathbb{R}^n,
		\]
		and this test function satisfies the strict inequality
		\[
		\partial_t\varphi(\bar t,\bar x)
		+
		\sup_{u\in U}
		\Bigl\{
		\mathcal {L}^u\varphi(\bar t,\bar x)
		+
		L_2|\varphi(\bar t,\bar x)|
		+
		L_3
		\bigl\|(\mathcal{L}^{u,(k)}\varphi(\bar t,\bar x))_{k\ge1} \bigr\|_{\ell^2}
		\Bigr\}
		<0.
		\]
		
		However, since $\omega$ is a viscosity subsolution of the auxiliary equation \eqref{eq:5.2},
		the above function $\varphi$, as a valid test function, must satisfy at $(\bar t,\bar x)$ the inequality
		\[
		\partial_t\varphi(\bar t,\bar x)
		+
		\sup_{u\in U}
		\Bigl\{
		\mathcal {L}^u\varphi(\bar t,\bar x)
		+
		L_2|\varphi(\bar t,\bar x)|
		+
		L_3
		\bigl\|
		(\mathcal{L}^{u,(k)}\varphi(\bar t,\bar x))_{k\ge1}
		\bigr\|_{\ell^2}
		\Bigr\}
		\ge 0,
		\]
		which contradicts the previous strict inequality.
		
		Therefore, the assumption is false, and hence
		\[
		W_1(t,x)\le W_2(t,x),
		\qquad \forall (t,x)\in[0,T]\times\mathbb{R}^n.
		\]
		
		If both $W_1$ and $W_2$ are viscosity solutions of \eqref{eq:4.1}, then applying the above comparison result to $(W_1,W_2)$ and $(W_2,W_1)$ yields
		\[
		W_1(t,x)=W_2(t,x),
		\qquad \forall (t,x)\in[0,T]\times\mathbb{R}^n.
		\]
		Therefore, the viscosity solution of the HJB equation \eqref{eq:4.1} is unique in this class of functions.
	\end{proof}
	
	\section{Conclusion}
	In this paper, we have developed a unified analytical framework for stochastic optimal control of FBSDEs driven by general L\'{e}vy processes, leveraging Teugels martingales to handle infinite-dimensional jump perturbations. The dynamic programming principle is rigorously established, and the associated Hamilton–Jacobi–Bellman equation is shown to admit a unique viscosity solution. These results extend existing stochastic control theory beyond Brownian or Poisson-driven systems, providing a systematic approach for analyzing general L\'{e}vy jump dynamics. The proposed framework lays a foundation for future studies on more complex jump-diffusion control problems and their applications in finance and engineering.

\end{document}